\documentclass[11pt]{amsart}

\usepackage[
top = 3.5cm, bottom=3.5cm, left = 2.7cm, right = 2.7cm,
headsep=1.2cm
]{geometry}

\usepackage[LGR, T1]{fontenc}

\usepackage{centernot}
\usepackage{amssymb, amsmath, amsthm}
\usepackage{mathrsfs}
\usepackage{comment}
\usepackage{cleveref}
\usepackage{graphicx} %scalebox
\usepackage{cjhebrew}

\usepackage{xcolor}

\newcommand{\todo}[1]{\textcolor{red}{#1}}

\theoremstyle{plain}
\newtheorem{theorem}{Theorem}
\newtheorem{lemma}[theorem]{Lemma}
\newtheorem{proposition}[theorem]{Proposition}
\newtheorem{corollary}[theorem]{Corollary}
\theoremstyle{definition}
\newtheorem{definition}[theorem]{Definition}
\theoremstyle{remark}
\newtheorem{remark}{Remark}

\usepackage{cleveref}

\crefname{lemma}{lemma}{lemmas}
\Crefname{lemma}{Lemma}{Lemmas}
\crefname{proposition}{proposition}{propositions}
\crefname{corollary}{corollary}{corollaries}
\crefname{definition}{definition}{definitions}

\newcommand{\rlzin}{\mathop{\varepsilon}}
\newcommand{\notrlzin}{\centernot{\varepsilon}}

\newcommand{\Perp}{\mathop{\rlap{$\bot$}\mkern3mu \raisebox{0ex}{$\bot$}}}
\newcommand{\scalePerp}{\scalebox{0.75}{$\boldsymbol{\Perp}$}}
\newcommand{\falsity}[1]{|| #1 ||}
\newcommand{\falsitydual}[1]{\falsity{#1}^{\scalePerp}}
\newcommand{\dual}[1]{{#1}^{\scalePerp}}
\DeclareRobustCommand{\qproofs}{%
  \mathord{\text{\usefont{LGR}{STEP-TLF}{m}{n}\symbol{"15}}}%
}

\newcommand{\closed}{fully-closed\,}
\newcommand{\closure}{full-closure\,}

\newcommand{\dsucc}{\mathop{\rlap{$\succ$}\mkern4mu \raisebox{0ex}{$\succ$}}}
\newcommand{\len}{\ell}

\newcommand{\cc}{\mathsf{cc}}
\newcommand{\BR}{\mathbf{B}}
\newcommand{\Restr}{{\bf Restr}}
\newcommand{\Ind}{{\bf Rec}}

\newcommand{\dfn}{\mathrel{::=}}

\newcommand{\op}{\textnormal{\texttt{op}}}

\newcommand{\imp}{\rightarrow}
\newcommand{\subterm}[1]{\mathrm{SubT}({#1})}
\newcommand{\substacks}[1]{\mathrm{SubS}({#1})}
\newcommand{\FV}[1]{\mathrm{FV}({#1})}

\newcommand{\dom}[1]{\mathrm{dom}({#1})}
\newcommand{\im}[1]{\mathrm{im}(#1)}

\newcommand{\restr}{\upharpoonright}

\newcommand{\fullname}[1]{\text{\Large \cjRL{r}}({#1})}

\title{Bar-recursion and preservation of cardinals}

\author{Laura Fontanella}
\address{Univ Paris Est Creteil, LACL, [F-94010 Creteil, France]}
\email{laura.fontanella@u-pec.fr}
\author{Jacopo Furlan}
\address{Laboratoire d'Informatique de Paris Nord, Université Sorbonne Paris Nord\\ 
and Univ Paris Est Creteil, LACL, [F-94010 Creteil, France]}
\email{furlan@lipn.univ-paris13.fr}
\begin{document}

\begin{abstract} %Building up from some work of J-L. Krivine \cite{DBLP:conf/csl/Krivine16}, 

This work presents a transfinite version of the bar-recursion in the context of classical realizability models for set theory. Bar-recursion has been previously used to obtain realizability interpretations of countable choice and dependent choice, and was employed by Krivine to realize the continuum hypothesis in classical realizability. In this paper, we introduce a transfinite variant of bar-recursion and use it to construct realizability models validating uncountable fragments of the Axiom of Choice. Moreover, our construction reveals that this generalized bar-recursion is related to preservation of cardinals. To show this, we define an analogue of the forcing notion of $\kappa$-closure for classical realizability algebras that we call $\kappa$-\closed. We show that, in realizability algebras satisfying the $\kappa$-\closure property, generalized bar-recursion realizes that any cardinal up to $\kappa$ admits a representative in the realizability model which remains a cardinal.
\end{abstract}

\maketitle

\noindent {\bf Keywords:} Classical realizability, Bar Recursion, Axiom of Choice 

\noindent {\bf Acknowledgements:} This research was partially supported by the \emph{Agence National de le Recherche} (ANR), project Cocontens. 

\section{Introduction}

Bar-induction, originally formulated by Brouwer \cite{brouwer1927definition} in intuitionistic mathematics, plays a crucial role in understanding well-foundedness properties and constructive interpretations of choice principles. It has been studied in computer science from proof-theoretic, semantic, and operational perspectives. The literature on applications of bar-induction in computer science is vast, we may cite a few results in this field. Bernardi et al. \cite{BR:DBLP:conf/esop/BernardiCLS25}  
used bar-induction for characterizing properties of orders on parallel processes. Rahli et al. \cite{DBLP:journals/jacm/Liron} have shown that bar-induction is compatible with constructive type
theory, thus providing a theoretical framework for using bar-induction in constructive work and integrating it into proof-assistants such as Rocq or Agda.
Brede and Herbelin \cite{DBLP:conf/lics/BredeH21} studied the duality between bar-induction and choice principles.

Bar-induction yields a higher-type recursion scheme known as \emph{bar-recursion}, introduced by Spector \cite{Spector1962} who used it to extend Gödel's functional interpretation of Peano Arithmetic to Classical Analysis via a double-negation translation of Countable Choice, CC. 
%provide a functional interpretation of classical analysis intended as First-Order Arithmetic plus Dependent Choice. 
Subsequent
work has refined and extended this work, showing how suitable variants of bar
recursion provide computational content for CC and Dependent Choice, DC, in
different logical settings. Berardi, Bezem and Coquand \cite{DBLP:journals/jsyml/BerardiBC98}
used a form of bar-recursion for a realizability interpretation of CC and DC in classical logic.   
Berger and Oliva \cite{Berger_Oliva_2005} proposed a modified bar-recursion for realizing CC and DC;  the result is a refinement of Berardi, Bezem and Coquand's work without using their infinitary calculus. Blot and Riba \cite{DBLP:conf/aplas/BlotR13} showed how Berger and Oliva's modified bar-recursion can be used to realize CC in Parigot's $\lambda\mu$-calculus.
Streicher \cite{DBLP:journals/lmcs/Streicher17} developed a realizability model arising from a model of untyped lambda calculus in coherence spaces and used Berger and Oliva's bar-recursion to realize DC in this model. 

%\todo{Mancano BerardiOS19 (indispensabile?) et Escardo+Oliva14 (indispensabile?); forse anche Oliva+Powell 2018?}

%constructive proof of termination
%BR:DBLP:journals/logcom/BerardiOS19

%general BI 
%BR:DBLP:journals/corr/EscardoO14

Starting from these results, Krivine \cite{DBLP:conf/csl/Krivine16} adapted bar-recursion to the formalism of classical realizability, obtaining a realizer for CC and DC in a realizability interpretation of \emph{classical set theory}. 
This paper generalizes Krivine's 
version of bar recursion to a form of transfinite (possibly uncountable) recursion 
%over infinite sequences of terms 
and combines it with some well-known properties of forcing to realize,  not only countable, but also \emph{uncountable} fragments of the Axiom of Choice, AC, in a classical setting.
%transfinite case, relating the realizability approach with some well-known topics of forcing theory.

Classical realizability was introduced by Krivine in \cite{DBLP:journals/aml/Krivine01}, and developed in a series of papers \cite{DBLP:conf/types/Krivine14,DBLP:conf/csl/Krivine16, DBLP:journals/mscs/Krivine18, DBLP:journals/lmcs/Krivine21} that 
extend the realizability tradition started with Kleene \cite{DBLP:journals/jsyml/Kleene45} to classical theories, in particular to classical set theory. This work was inspired by the discovery of a proof-as-programs correspondence for classical logic according to 
Griffin's work \cite{DBLP:conf/popl/Griffin90}.
Krivine's technique 
is based on algebraic structures called \emph{realizability algebras}, which 
extend combinatory algebras
to provide a computational interpretation of classical logic. 
Realizability algebras are based on an orthogonality relation between programs and stacks, defining the so-called Krivine's abstract machine, KAM. In this setting, programs are formalized in
the $\lambda_\mathsf{c}$-calculus, a variant of the $\lambda$-calculus introduced in Felleisen et al. \cite{DBLP:conf/lics/FelleisenFKD86} which corresponds to an idealized Scheme programming language with a term for \texttt{call/cc}.
It is worth pointing out that the architecture of the KAM has found interesting applications in computer science beyond the scope of realizability interpretations of set theory, we mention some results: Diaz-Caro et al. \cite{diazcaro:hal-02175168} presented a semantics for a simply typed linear algebraic lambda-calculus based
on the KAM architecture, and showed that this provides a
framework to express \emph{quantum} control (i. e. operations on quantum data); inspired by the KAM, Accattoli et al. \cite{DBLP:conf/lics/AccattoliLV21} introduced the Space KAM, a space cost model for the $\lambda$-calculus accounting for logarithmic space; the KAM was also used by Ehrhard and Regnier \cite{DBLP:conf/cie/EhrhardR06} to show that the normal form of the Taylor expansion of a $\lambda$-term equals the Taylor expansion of its Böhm tree. 

%AccattoliVanoniDalLagoLICS2021 The Space of Interactions: introduced Space KAM, inspired by KAM, a space cost model for the lambda calculus accounting for logarithmic  space 
%(De Carvalo PhD thesis dimostra che a partire dal sistema dei multitypes - modello LI- se il modello ha un certo tipo ha un bound corrispondente al numero di passi di riduzione di testa linear  -linear reduction steps- modellizato dalla KAM, questo ci da un costo in termini di tempo non di spazio)

Classical realizability generalizes the method of Forcing in set theory, since every boolean-valued model can be naturally interpreted as a classical realizability model (we refer to \cite{DBLP:journals/Richard} for the details of this translation). Nevertheless, from a computational point of view, forcing models are not very informative, since all the realizers are interpreted as the same element (the top element of the boolean algebra), that is why forcing models are called \emph{trivial} realizability models. 
Certain set theoretic principles beyond the scope of $\mathrm{ZF}$ are hard to interpret in non-trivial classical realizability ---in particular one of the most challenging problems is how to realize the full Axiom of Choice. Krivine was able to realize Countable and Dependent choice using a version of bar-recursion, or quote, or a clock. Later Fontanella and Geoffroy \cite{FontanellaG20} realized uncountable fragments of Zorn's Lemma 
using an operator $\chi$ that we call here \emph{trichotomy operator} combined with an additional property called the \emph{$\kappa$-chain condition} (generalizing the homonymous property of forcing). 
%\todo{citare Fontanella Geoffroy}\\
In \cite{DBLP:journals/lmcs/Krivine21}, Krivine has shown that the full $\mathrm{AC}$ can hold in a (non-trivial) realizability model, however it is not clear from his construction how to extract an explicit realizer for $\mathrm{AC}$; thus, its computational interpretation remains unsettled. 
%In this paper we introduce a transfinite version of bar-induction and the corresponding bar-recursion operator, and show how this can be used to realize uncountable fragments of choice, proving that bar-recursion is linked to choice principles even at uncountable levels.  

The study of bar principles highlighted bar recursion as a central mechanism linking inductive principles on trees over finite sequences of natural numbers, with choice principles in both constructive and classical settings. 
This paper explores a generalized form of bar-induction for trees over \emph{infinite} (possibly uncountable) sequences of ordinals (Definition \ref{def: transfinite bar induction}), and introduces an operator that we call \emph{transfinite bar-recursion} (Definition \ref{def: transfinite bar recursion}) for implementing a form of bar-recursion over infinite (possibly uncountable) sequences of terms. In this paper, we show that even at uncountable levels, bar-recursion and choice principles are entangled.

This work requires a careful treatment of ordinals and cardinals in classical realizability models. Indeed, realizability models for set theory are built starting from ground ZF models that are used to name the elements of the resulting realizability model, however an uncountable cardinal in the ground model, may `collapse' to a countable ordinal in the ground model. Since our focus is on \emph{uncountable} fragments of the axiom of choice, this phenomenon could interfere with the result and lead us to realize merely countable choice. 
In the case of forcing constructions, several properties ensure preservation of cardinals, one of them is the so-called $\kappa$-\emph{closure} ensuring that all cardinals up to $\kappa$ are preserved. In this paper, we introduce an analogous property for classical realizability algebras that we call  \emph{$\kappa$-\closure} (Definition \ref{def: kappa-closure}). $\kappa$-\closure plays a role similar to the continuity axiom in Baire space in the proof of termination of bar-recursion. We make the link with forcing explicit: for realizability algebras arising from a boolean algebra $\mathbb{B}$, the $\kappa$-\closure coincides with $\mathbb{B}^+$ being $\kappa$-closed according to the usual set theoretic definition.

We show that in realizability algebras that satisfy the $\kappa$-\closure where $\kappa$ is the size of the algebra, our transfinite bar recursion realizes fragments of the Axiom of Choice below $\kappa$ (Theorem \ref{thm:ACk}). Moreover, 
we show that in classical realizability algebras, the $\kappa$-\closure can be combined with our transfinite bar-recursion to preserve cardinals up to $\kappa$ (Theorem \ref{thm:preserving cardinals}). Thus uncountable fragments of choice can be realized starting from uncountable realizability algebras. 

The main contributions of this paper are the following:
\begin{itemize}
    \item we define the notion of $\kappa$-\closure to realizability algebras, showing that it corresponds to a generalization of the $\kappa$-closure principle in forcing case;
    \item the introduction of a variant of bar-induction for trees of infinite (possibly uncountable) sequences, which is shown to be a theorem of $\mathrm{ZFC}$;
    \item we define a transfinite bar-recursion operator corresponding to the computational counterpart of our transfinite bar-induction and we give a proof of termination by means of transfinite bar-induction in the context of a $\kappa$-\closed realizability algebra;
    \item we prove that our transfinite bar recursion combined with the $\kappa$-\closure yields 
    a realizability model which satisfies the Axiom of Choice for families indexed below a representative of $\kappa,$ denoted $\hat{\kappa},$ and ensure the preservation of cardinals up to $\hat{\kappa}$ (i.e. the representative $\hat{\mu}$ of any cardinal $\mu\leq\kappa$ remains a cardinal in this model). 
\end{itemize}

The paper is structured as follows: in \cref{sec: classical realizability} and section \ref{subsec: names} we recall the general technique for building classical realizability models for set theory; in \cref{sec:closure} we introduce the $\kappa$-\closure property in classical realizability; in \cref{subsec: exemple} we illustrate an example of (non-trivial) realizability algebra satisfying this property; \cref{sec: bar induction} introduces a transfinite bar induction principle and the corresponding transfinite bar-recursion operator. 
In \cref{sec: ACkappa} and \cref{sec: cardinal preservation}, we show that transfinite bar-recursion combined with $\kappa$-\closure can be used to realize $\mathrm{AC}_{<\hat{\kappa}}$ (the Axiom of Choice for families index by ordinals below $\kappa$), and to preserve cardinal up to $\hat{\kappa}.$   

\begin{comment}
Countable Axiom of Choice and Dependent Choice can be realized using Krivine's version of the bar-recursion \cite{DBLP:conf/csl/Krivine16}, while uncountable fragments of the Axiom of Choice have been realized by Fontanella and Geoffroy in \cite{FontanellaG20} using an operator $\chi$ that we call here \emph{trichotomy operator} combined with the assumption of an additional property for realizability algebras known as the \emph{$\kappa$-chain condition} (inspired by a well-known property of forcing). While Krivine's intepretation with bar recursion is limited to countable fragments of the axiom of choice, Fontanella and Geoffroy's work uses an operator (the trichotomy operator) that has no natural interpretation in computational models. In this paper we combine these constructions and present a different realizability interpretation for countable as well as uncountable fragments of the Axiom of Choice using a generalization of Krivine's bar recursion combined with a property that we call $\kappa$-closure corresponding to a well-known property of forcing having the same name. 
\end{comment}

\section{Classical Realizability algebras}\label{sec: classical realizability}

In this section we recall the basic notions of classical realizability, in particular we 
present \emph{realizability algebras} which are the main ingredient for the construction of classical realizability models for set theory. Realizability algebras can be thought of as generalizations of the notion of boolean algebras, that involve programs and stacks interacting as players and opponents of formulas to build a coherent theory. 

A realizability algebra is a tuple $\mathcal{K} = \langle\Lambda_\mathsf{c}, \Pi, \succ, \Perp\rangle,$ where $\Lambda_\mathsf{c}$ is a set of \emph{programs} or \emph{terms}, $\Pi$ is the set of \emph{stacks}, $\succ$ defines a notion of execution in some abstract machine, $\Perp$ is the so-called \emph{pole} and defines a notion of orthogonality between programs and stacks. These notions can be customized, but classical realizability imposes certain minimum requirements presented hereafter. 

$\Lambda_\mathsf{c}$ should be a variant of the $\lambda_\mathsf{c}$-calculus, 
that is the set of terms $t$ generated by the following grammar for a fixed set of variables $V$ and a (possibly empty) set of constants $A$:
    \[ t\;\dfn\; x\mid a\mid\cc\mid\lambda x.t\mid
    (t)t\mid k_\pi\quad\text{for $x\in V,a\in A, \pi\in\Pi$}.\] 

When $A$ is non-empty, the constants in $A$ will be called \emph{special instructions}. The term $\cc$ is called \emph{call-cc} and is needed for interpreting classical logic, constants $k_\pi$ are called \emph{continuations} of $\pi.$ We can add additional instructions to this calculus depending on which properties we want to be satisfied in the corresponding realizability model, in particular we are allowed to deal with an uncountable set of terms. Parenthesis are considered as left associative, i. e. $tu_1\dots u_n = (\cdots((t)u_1)\cdots)u_n$. $\lambda$-abstractions are written in contracted form, so that $\lambda x_1\cdots x_n.t$ denotes $\lambda x_1.\cdots\lambda x_n.t$\,. The set of free variables of a term $t\in\Lambda_\mathsf{c}$ is denoted by $\FV{t}$. The set of closed $\lambda_\mathsf{c}$-terms is denoted by $\Lambda_\mathsf{c}^{\textrm{closed}}$, namely terms where every variable is under the scope of a $\lambda$-abstraction; likewise, $\Lambda^{\textrm{closed}}$ denotes the set of closed $\lambda$-terms. The definition of substitution for $\lambda_\mathsf{c}$-terms corresponds to the usual one in $\lambda$-calculus and it is denoted by curly brackets. The term $\cc$ and stack constants $k_\pi$ are considered as constants, that means $\cc\{x:=u\} = \cc, k_\pi\{x:=u\}=k_\pi$ for each variable $x$ and each term $u$.

\emph{Stacks} are sequences of terms in $\Lambda_\mathsf{c}^{\textrm{closed}}$. We shall fix a set of \emph{stack bottoms} (also called \emph{atomic stacks}) denoted $\Pi_0$, then $\Pi$ is defined as $[\Lambda_\mathsf{c}^{\textrm{closed}}]^{<\omega}\times\Pi_0$, or equivalently it is generated by the following grammar: 
	\[\pi\;\dfn\;\pi_0\mid t\centerdot\pi\quad\textrm{ where }\pi_0\in\Pi_0, t\in\Lambda_\mathsf{c}^{\textrm{closed}}.\]
The set of \emph{processes} $\Lambda_\mathsf{c}\star\Pi$ is defined as $\Lambda_\mathsf{c}^{\textrm{closed}}\times\Pi$. A process $p=(t, \pi)\in\Lambda_\mathsf{c}\star\Pi$ will be denoted by $t\star\pi$. 

The \emph{reduction} $\succ$ describes an abstract machine (the KAM), and it is a relation on $\Lambda_\mathsf{c}^{\textrm{closed}}\star\Pi$ that can be customized but should satisfy at least the following rules: for any $t, u\in \Lambda_\mathsf{c}^{\textrm{closed}}, \pi,\rho\in\Pi$,
\begin{align*}
    \lambda x.t\star u\centerdot\pi&\succ_{grab} t\{x:=u\}\star\pi\tag{\emph{grab}},\\
    (t)u\star\pi&\succ_{push} t\star u\centerdot\pi\tag{\emph{push}},\\
    \cc\star t\centerdot\pi&\succ_{save} t\star k_\pi\centerdot\pi\tag{\emph{save}},\\
    k_\pi\star t\centerdot\rho&\succ_{restore} t\star\pi\tag{\emph{restore}}.
\end{align*}
The labels of the reduction are omitted unless relevant for the discussion. Constants can be evaluated by adding further rules (see e.g. Definition \ref{def:trichotomyoperator}). The reflexive and transitive closure of the relation $\succ$ is denoted by the symbol $\dsucc$. Given two terms $t, u\in \lambda_c$ we write $t\succ u$ when $t\star \pi\succ u\star \pi$ for all $\pi\in \Pi$. Similarly, $t\dsucc u$ denotes that either $t=u$ or there exist terms $t_0,\dots t_n$ such that $t_0=t, t_n=u$ and $t_i\succ t_{i+1}$ for $0\leq i<n$.

The \emph{pole} $\Perp\subseteq \Lambda_\mathsf{c}\star\Pi$ is a subset of processes closed by antireduction, i.e. if $u\star\rho\succ t\star\pi$ and $t\star\pi\in\Perp$, then $u\star\rho\in\Perp$; in particular, the push rule ensures that if $t\star u\centerdot\pi$ is in the pole, so is $(t)u\star\pi$. 

This concludes the definition of a realizability algebra. All over the paper, $\mathcal{K}$ will denote some specific or arbitrary realizability algebra. 

\begin{remark}
    In this paper we will also assume that if $(t)u\star \pi\in \Perp$ then $t\star u\centerdot \pi\in \Perp,$ and if $\lambda x.t\star u\centerdot\pi\in \Perp$ then $ t\{x:=u\}\star\pi\in \Perp$.
\end{remark}

Given a set of stacks, the pole allows for naturally associating to every set of stacks, a set of orthogonal terms. 

 \begin{definition}
    For any set of stacks $X\subseteq\Pi$, the set of orthogonal terms is defined by 
        \[\dual{X}\dfn\{t\in\Lambda_\mathsf{c}\mid\forall \pi\in X(t\star\pi\in\Perp)\}.\] 
\end{definition}

In order to use a realizability algebra to build models of a classical theory, one assigns to each formula $F$ in the language $\mathscr{L}$ of the theory, a \emph{falsity value}  denoted $\falsity{F},$ and a 
\emph{truth value} denoted $\falsitydual{F}$. Truth and falsity values are defined simultaneously according to the notion of orthogonality induced by the pole, and we assume that formulas are written using  $\top,$ $\bot,$ $\imp$ and $\forall$ as primitive logical symbols and the other logical operators are defined from these ones as follows:
    \[
    \begin{array}{ll}
    \lnot F \equiv F\imp \bot,& 
    F\land G\equiv (F\rightarrow G\rightarrow\bot)\rightarrow\bot,\\    
    F\vee G\equiv (F\rightarrow\bot)\rightarrow (G\rightarrow\bot)\rightarrow\bot, &
    \exists xF\equiv(\forall x(F\rightarrow\bot))\rightarrow\bot.
    \end{array}
    \]
In the following, we will denote by $A_1,\dots, A_n\rightarrow B$ formulas in the form $A_1\rightarrow\dots\rightarrow A_n\rightarrow B$, the latter being logically equivalent to $A_1\land\dots\land A_n\rightarrow B$.

Truth values are defined by 
$$\falsitydual{F}:=\{t\in \Lambda_\mathsf{c}\mid\forall \pi\in \falsity{F}(t\star \pi\in \Perp)\}.$$
Thus, the truth value of a formula is the orthogonal of its falsity value.

Falsity values must satisfy the following conditions: 
\begin{itemize}
\item $\falsity{\top}=\emptyset$ and $\falsity{\bot}=\Pi$; 
\item $\falsity{A\imp B}=\{t\centerdot \pi;\ t\in\falsitydual{A}\textrm{ and }\pi\in \falsity{B}\}$
\end{itemize}
In subsection \ref{subsec: names} we will complete the definition of falsity values for the formulas of set theory.

We need to select special terms so that, when we consider the formulas whose truth value contains one of those terms, we have a coherent theory; traditionally such designated terms are called \emph{proof-like terms}.  

\begin{definition}\label{def: quasi-proofs}
    A set of \emph{proof-like terms}, denoted $\qproofs$, is a subset of $\Lambda_\mathsf{c}$ such that 
    \begin{itemize}
    \item $\qproofs$ is closed by application 
    \item $\cc\in\qproofs$ and $\Lambda^{\textrm{closed}}\subset\qproofs$ (where $\Lambda^{\textrm{closed}}$ denotes the set of all closed terms of pure $\lambda$-calculus)
    \item $\qproofs\cap\falsitydual{\bot}=\emptyset.$
    \end{itemize}
    Given $t\in \qproofs$ and a formula $F$, we say that $t$ \emph{realizes} $F$ and write $t\Vdash F$ when $t\in \falsitydual{F}.$ 
    We write $\Vdash F$ when there is $t\in \qproofs$ such that $t\Vdash F.$  
    The theory $\{F\in\mathscr{F}_\mathscr{L}\mid\ \Vdash F\}$ is the \emph{realizability theory} induced by a fixed pole $\Perp$ for the language $\mathscr{L}$.
\end{definition}

The so-called \emph{adequacy lemma} (see e.g. \cite{DBLP:journals/aml/Krivine01}) ensures that the realizability theory contains classical tautologies and it is closed by modus ponens. 

In Definition \ref{def: quasi-proofs}, the condition $\forall t\in\qproofs\,\exists \pi\in\Pi(t\star\pi\not\in\Perp)$ is required to ensure that proof-like terms won't realize contradictory statements, so that the realizability theory is consistent and has a model.

\begin{remark}
    Observe that if $\Perp\neq\emptyset$, then $\qproofs\neq\Lambda_\mathsf{c}$. Indeed, if $t\star\pi\in\Perp$, then for any $\rho\in\Pi$ we have $(k_\pi)t\star\rho\succ t\star\pi$ which implies $(k_\pi)t\star\rho\in\Perp$, thus $(k_\pi)t\not\in\qproofs$.
\end{remark}

By abuse of terminology the expression \emph{the realizability model} refers to any arbitrary model of the realizability theory; traditionally we also denote by $\mathcal{N}$ any such model. 

We conclude this section by fixing the notation for some technical tools needed in the following. 
\emph{Contexts} can be defined by the grammar
    \[C\dfn [\;]_i\mid\lambda x.C\mid Ct\mid tC\mid CC\ \textrm{ for any variable $x$, for any $\lambda_\mathsf{c}$-term $t$ and for $i\in\omega$.}\]
This notion of context corresponds to \emph{multiple numbered contexts} in \cite{DBLP:books/daglib/0067558}. We will use the compact notation $C[t_1,\dots, t_n]$ to refer to a context where the holes are (orderwise) replaced by terms $t_1,\dots, t_n$. For example, if $C[\;]=\lambda x.[\;]_{2}(x[\;]_1)$, then $C[t,u]$ is the term $\lambda x.u(xt)$. We will avoid the use of indexes when it is not necessary.

We also generalize the definition of subterm to $\lambda_\mathsf{c}$-terms and stacks. In addition to the usual definition, the cases below are required: for each term $t$, and for each stack bottom $\pi_0$ and any stack $\pi,$
    \[ \subterm{k_\pi}\dfn\subterm{\pi},\quad
    \subterm{\pi_0}\dfn\emptyset,\quad \subterm{t\centerdot\pi}\dfn\subterm{t}\cup\subterm{\pi}.
    \]
The notion of substack of a $\lambda_\mathsf{c}$-term or of stacks is defined by
\[\substacks{x}\dfn\substacks{a}\dfn\emptyset,\quad\substacks{\lambda x.t}\dfn\substacks{t},\quad\substacks{(t)u}\dfn\substacks{t}\cup\substacks{u},\]
\[\substacks{k_\pi}\dfn\substacks{\pi},\quad\substacks{\pi_0}\dfn\{\pi_0\},\quad\substacks{t\centerdot\pi}=\{t\centerdot\pi\}\cup\substacks{t}\cup\substacks{\pi},\]
for $x\in V, a\in A\cup\{\cc\}, t, u\in\Lambda_\mathsf{c}, \pi_0\in\Pi_0, \pi\in\Pi$.

Given constants $a\in A$, the substitution of $a$ by $u$ in a term $t$ or in a stack $\pi$ is defined by\\[6pt]
\begin{tabular}{rcl}
$a\{a:=u\}$ &$=$& $u;$\\
$a'\{a:=u\}$ &$=$ &$a'\quad(a'\neq a, a'\in A);$\\
$x\{a:=u\}$ &$=$& $x;$\\
$(\lambda x.t')\{a:=u\}$&$=$&$\lambda x.t'\{a:=u\};$
\end{tabular}
\begin{tabular}{rcl}
$((t_1)t_2)\{a:=u\}$&$=$&$(t_1\{a:=u\}) t_2\{a:=u\};$\\
$k_\varsigma\{a:=u\}$&$=$&$k_{\varsigma\{a:=u\}}\quad(\varsigma\in\Pi);$\\
$\sigma\{a:=u\}$&$=$&$\sigma\quad(\sigma\in\Pi_0);$\\
$(t'\centerdot\pi)\{a:=u\}$&$=$&$t'\{a:=u\}\centerdot \pi\{a:=u\}.$
\end{tabular}\\[4pt]
The definition naturally extends to processes: for $p= t\star\pi, p\{a:=u\}\dfn (t\{a:=u\})\star(\pi\{a:=u\})$

Given stack bottoms $\sigma\in \Pi_0$, the substitution of $\sigma$ by $\rho$ in a term $t$ or in a stack $\pi$ is defined by\\[6pt]
\begin{tabular}{rcl}
$a\{\sigma:=\rho\}$&$=$&$a;$\\
$x\{\sigma:=\rho\}$&$=$&$x;$\\
$(\lambda x.t')\{\sigma:=\rho\}$&$=$&$\lambda x.t'\{\sigma:=\rho\};$\\
$((t_1)t_2)\{\sigma:=\rho\}$&$=$&$(t_1\{\sigma:=\rho\}) t_2\{\sigma:=\rho\};$\\
\end{tabular}
\begin{tabular}{rcl}
$k_\varsigma\{\sigma:=\rho\}$&$=$&$k_{\varsigma\{\sigma:=\rho\}}\quad(\varsigma\in\Pi);$\\
$\sigma\{\sigma:=\rho\}$&$=$&$\rho;$\\
$\sigma'\{\sigma:=\rho\}$&$=$&$\sigma'\quad(\sigma'\neq\sigma, \sigma'\in\Pi_0);$\\
$(t'\centerdot\pi)\{\sigma:=\rho\}$&$=$&$t'\{\sigma:=\rho\}\centerdot \pi\{\sigma:=\rho\}.$
\end{tabular}

The definition naturally extends to processes: for $p= t\star\pi, p\{\sigma:=\rho\}\dfn (t\{\sigma:=\rho\})\star(\pi\{\sigma:=\rho\})$.
%%%

\section{Realizability models for set theory}\label{subsec: names}

We work with a classical first-order theory without equality. 
This is due to the ambiguous nature of equality which may be intended as an extensional relation 
(two sets are equal when they have the same elements) or as Leibniz identity 
(two sets are identical when they satisfy the same formulas of the language) 
which is not necessarily an extensional relation. 
Our framework will be sensible to such a difference as we will work with a version of set theory, 
called $\mathrm{ZF}_\varepsilon,$ where two equalities can be defined, an extensional one ($\simeq$) and a non-extensional one ($=$).
The realizability models we discuss are consequently models of $\mathrm{ZF}_{\rlzin}$, 
but they induce a model $\mathrm{ZF}$ since the former is a conservative extension of the latter.

The theory $\mathrm{ZF}_\varepsilon$ first appeared in Friedman \cite{DBLP:journals/jsyml/Friedman73} then re-elaborated by Krivine \cite{DBLP:journals/aml/Krivine01} in order to develop a realizability interpretation for classical set theory. 
%elaborated Friedman's work to develop a consistent realizability interpretation for classical set theory. 
%The theory examined is called $\mathrm{ZF}_\varepsilon$, 
The language of $\mathrm{ZF}_\varepsilon$ is $\mathscr{L}_{\varepsilon}\dfn\{\notrlzin, \not\in, \subseteq\}$, 
and the set of formulas in this language is denoted by $\mathscr{F}_\varepsilon$. 
For a comprehensive presentation of $\mathrm{ZF}_\varepsilon$ we refer to \cite{DBLP:journals/corr/abs-1007-0825, DBLP:journals/Richard}; 
for the present paper, it is sufficient to point out that $\mathrm{ZF}_\varepsilon$ encompasses two different membership notions $\rlzin$ and $\in$, 
respectively non-extensional and extensional. For technical reasons we take as primitives their negative forms $\notrlzin$ and $\notin$. 
The relationship $x\subseteq y$ is defined by the formula $\forall z(z\rlzin x\rightarrow z\in y)$ 
and leads to the definition of the extensional equality $x\simeq y$ as $(x\subseteq y)\land(y\subseteq x)$.

We fix a ground model $\mathcal{M}$ of $\mathrm{ZFC}$ and a realizability algebra $\mathcal{K}\in\mathcal{M}$. In order to specify the falsity value of each formula, it is necessary to fix the support for interpreting open formulas. This is done by defining \emph{names} within $\mathcal{M}$.

\begin{definition} 
For every ordinal $\alpha$, we inductively define  a set $M^{\mathcal{K}}_\alpha$ as follows : 
$$M^{\mathcal{K}}_\alpha\dfn\bigcup_{\beta<\alpha} \mathcal{P}(M_\beta\times\Pi).$$
Then the class of \emph{names}, denoted $M^{\mathcal{K}},$ is defined by 
$M^{\mathcal{K}}:=\bigcup_{\alpha\in Ord} M^{\mathcal{K}}_\alpha.$
\end{definition}

\begin{definition} 
    Given formula $F\in\mathscr{F}_\varepsilon$, we define its \emph{falsity value}, denoted $\falsity{F},$ by induction on the structure of $F$ as follows:
    \begin{itemize}
        \item $\falsity{\top}\dfn\emptyset, \falsity{\bot}\dfn\Pi$;
        \item  For every $c,d\in M^{\mathcal{K}},$
        \begin{itemize} 
            \item $\falsity{c\notrlzin d} \dfn \{\pi\in\Pi\mid \langle c,\pi\rangle\in d\}$
            \item $||c\not\in d|| \dfn \bigcup_{e\in M^{\mathcal{K}}} \{t\centerdot t'\centerdot\pi\in\Pi\mid\langle e,\pi\rangle\in d, t\Vdash e\subseteq c, t'\Vdash c\subseteq e\}$;
            \item $||c\subseteq d|| \dfn \bigcup_{e\in M^\mathcal{K}} \{t\centerdot \pi\in\Pi\mid\langle e,\pi\rangle\in c, t\Vdash e\not\in d\}$
             \end{itemize}
        (this is well defined by induction on the rank of the names $c, d$).     
        \item $\falsity{F_1\rightarrow F_2}\dfn\{t\centerdot\pi\in\Pi\mid t\in\falsitydual{F_1}, \pi\in\falsity{F_2}\}$;
        \item $\falsity{\forall xG}\dfn\bigcup_{c\in M^{\mathcal{K}}} \falsity{G[c/x]}$.
    \end{itemize}
\end{definition}

\begin{theorem}[\cite{DBLP:journals/aml/Krivine01}] 
The axioms of $\mathrm{ZF}_\varepsilon$ are realized.
\end{theorem}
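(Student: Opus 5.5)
The proof is axiom by axiom. For each axiom of $\mathrm{ZF}_\varepsilon$ I would exhibit a proof-like term, usually a closed $\lambda$-term, possibly using $\cc$, and check directly from the definition of falsity values that it is orthogonal to every stack in $\falsity{F}$. By the adequacy lemma the realizability theory contains classical tautologies and is closed under modus ponens. It therefore suffices to realize a convenient axiomatization: extensionality-type axioms relating $\rlzin$, $\in$ and $\subseteq$, and the set-existence axioms (pairing, union, power set, comprehension, collection, infinity, foundation). The base facts are the equality axioms linking $\in$ to $\subseteq$: $\lambda x.xI\Vdash \forall x\,(x\subseteq x)$ and $\lambda xy.yx\Vdash\forall x\,\forall y\,(y\rlzin x\to y\in x)$-type statements. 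I would prove these simultaneously by induction on the rank of the names. The induction works because $\falsity{c\notin d}$ and $\falsity{c\subseteq d}$ are defined by mutual recursion on ranks, and the realizers are fixed points built with a Turing fixed-point combinator, which is a closed $\lambda$-term and hence in $\qproofs$. Transitivity of $\subseteq$ and substitutivity of $\simeq$ under $\in$ follow by the same rank induction.

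The set-existence axioms are witnessed by names built in $\mathcal{M}$, each pairing a candidate element with a stack. For comprehension, given $a$ and a formula $F$, take $b=\{\langle c,t\centerdot\pi\rangle\mid \langle c,\pi\rangle\in a,\ t\Vdash F[c]\}$. Then $c\rlzin b$ unfolds to $F[c]\to c\rlzin a$, which is realized uniformly by simple combinators. Pairing and union are analogous. Collection and power set use the ground-model axioms, again collection and power set, to bound the relevant names. For power set, take the set of all names $\{\langle c,\pi\rangle\}$ whose first components are subsets of $\mathrm{dom}(a)\times\Pi$. Infinity uses a name for $\omega$. Foundation, i.e. $\in$-induction, is realized by a fixed point $Y$ satisfying $Y\star t\centerdot\pi\succ t\star (Y)t\centerdot\pi$, together with induction on ranks of names.

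The main obstacle is extensional collection and power set. There the falsity value quantifies over all names $e$ in the definition of $\falsity{c\notin d}$, and one must cut this class down to a set in $\mathcal{M}$ while keeping the realizers uniform. To handle this I would collect, for each relevant stack and each name in the domain, a witness name of minimal rank via ground-model collection. That makes the constructed name a set, after which the realizer checking is routine.
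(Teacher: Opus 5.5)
The paper gives no proof of this statement; it only cites Krivine. Your overall route is essentially his: names built in $\mathcal{M}$, a fixed point plus induction on rank for foundation, $\mathcal{P}(\dom{a}\times\Pi)\times\Pi$ for power set, and ground-model collection for collection. However, several steps are wrong as written, and you have put the difficulty in the wrong place.

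\emph{Extensionality and foundation.} The extensionality axioms of $\mathrm{ZF}_\varepsilon$ are the two equivalences $x\notin y\leftrightarrow\forall z(z\subseteq x\to x\subseteq z\to z\notrlzin y)$ and $x\subseteq y\leftrightarrow\forall z(z\notin y\to z\notrlzin x)$. By the very definition of $\falsity{c\notin d}$ and $\falsity{c\subseteq d}$, both sides have the same falsity value, so $\lambda x.x$ realizes each direction and no induction is needed.

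Reflexivity of $\subseteq$ and $y\rlzin x\to y\in x$ are theorems of $\mathrm{ZF}_\varepsilon$, obtained by adequacy, not axioms. The realizers you give for them also fail:
\begin{itemize}
\item $\lambda x.xI\star t\centerdot\pi\dsucc t\star I\centerdot\pi$, but stacks of $\falsity{c\notin a}$ have the form $t_1\centerdot t_2\centerdot\pi'$ with $t_1\in\falsitydual{e\subseteq c}$, $t_2\in\falsitydual{c\subseteq e}$, $\langle e,\pi'\rangle\in a$. So $I\centerdot\pi$ need not lie in $\falsity{c\notin a}$, and $t\star I\centerdot\pi$ need not be in $\Perp$.
\item A correct direct realizer of $\forall x(x\subseteq x)$ is a genuine fixed point $r$ with $r\star t\centerdot\pi\dsucc t\star r\centerdot r\centerdot\pi$: take $e:=c$ and induct on rank.
\item Then $\lambda su.\,s(u\,r\,r)$, not $\lambda xy.yx$, realizes $y\rlzin x\to y\in x$.
\end{itemize}

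The foundation scheme of $\mathrm{ZF}_\varepsilon$ is $\rlzin$-induction, not $\in$-induction. $Y$ realizes it by rank induction because $\langle b,\pi\rangle\in a$ forces $b$ to have smaller rank. For $\in$-induction with a non-extensional $F$ this argument breaks, since $y\in x$ does not bound the rank of $y$; $\in$-induction is derived inside $\mathrm{ZF}_\varepsilon$.

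\emph{The obstacle you identify does not arise.} In $\falsity{c\notin d}$ only names $e\in\dom{d}$ contribute, so there is no class to cut down. Power set and collection in $\mathrm{ZF}_\varepsilon$ are stated with $\rlzin$, precisely so that no extensional version has to be realized. For power set, the witness for $x$ is $\{\langle z,t\centerdot\pi\rangle\mid\langle z,\pi\rangle\in a,\ t\in\falsitydual{z\rlzin x}\}\subseteq\dom{a}\times\Pi$.

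The only unbounded quantifier over names that must be bounded is $\exists v$, in collection and in Krivine's infinity scheme $\forall x\exists y(x\rlzin y\land\forall u\rlzin y(\exists v F\to\exists v\rlzin y\,F))$. Your minimal-rank idea is the right tool there: for each $u\in\dom{x}$ and each $\rho\in\Pi$, take the witnesses $v$ of least rank with $\rho\in\falsity{\lnot F[u,v]}$. For infinity, iterate this $\omega$ times in $\mathcal{M}$ starting from $\{x\}$; a name for $\omega$ alone does not realize that scheme.

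\emph{Comprehension.} It is $\falsity{c\notrlzin b}$ that coincides with $\falsity{F[c]\to c\notrlzin a}$. So $c\rlzin b$ amounts to $c\rlzin a\land F[c]$, not to $F[c]\to c\rlzin a$.

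With these corrections your plan is Krivine's proof.
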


\subsection{Equalities}
Along with the extensional equality defined above, a different definition of equality is introduced by specifying its falsity values. For any $c,d\in M^{\mathcal{K}}$, the falsity value of the formula $c=d$ is defined as
    \[
        \falsity{c=d}\dfn
        \begin{cases} \emptyset & M\models c=d,\\ \Pi & M\models c\neq d, \end{cases}
    \]
which is, $\Vdash c=d$ if and only if $c$ and $d$ are the same element in the ground model. It can be shown that $\Vdash\forall x\forall y(x=y\leftrightarrow\forall z(x\rlzin z\rightarrow y\rlzin z))$, which means that $=$ corresponds to Leibniz equality.  

\subsection{Canonical representatives}

\bigskip

The ground model $\mathcal{M}$ will be embedded in the realizability model using two operators, the so-called \emph{gimel operator} denoted $\gimel$ and the \emph{reish operator} denoted $\text{\Large \cjRL{r}}.$

\begin{definition}
Given a set of names $c\subset\mathcal{M}^\mathcal{K}$,  we define  $\gimel(c):= c \times \Pi\in\mathcal{M}^\mathcal{K}$.
For $x\in\mathcal{M}$, the canonical representative of $x$ in the realizability model, denoted $\fullname{x}$, is inductively defined by
    \[\fullname{x}\dfn\{\langle \fullname{y},\pi\rangle\mid y\in x, \pi\in\Pi\},\]
or equivalently, $\fullname x=\gimel\{\fullname{y}\mid y\in x\}$. Names of the form $\fullname{x}$ are called \emph{full names}. 
\end{definition}
The canonicity of full names is justified by the fact that for any element $y$ of the ground model, $y\in x$ if and only if $\Vdash \fullname{y}\rlzin\fullname{x}$.

Despite their canonicity, full names may be interpreted differently from their original counterpart. For instance, $\fullname{2}$ contains more than two non-extensional elements in non-trivial realizability models (see e. g. \cite{DBLP:journals/corr/abs-1007-0825, DBLP:conf/types/Krivine14, DBLP:journals/mscs/Krivine18, DBLP:journals/Richard, DBLP:journals/mscs/Miquel20}). It can be shown that when $\fullname{2}$ has only two non-extensional element, the $\mathrm{ZF}$-model induced by the realizabilty algebra is isomorphic to a Forcing model (see \cite[Proposition 3.30 and Remark 3.31]{DBLP:journals/mscs/Miquel20}), which are models where every statement is realized by a unique term.

The gimel operator and the reish operator allow for the definition of restricted quantifier: given a formula $F(z)$, we define 
    \[\falsity{\forall z^{\gimel{c}}\,F(z)}\dfn\bigcup_{d\in c}\falsity{F[d]},\]
    \[\falsity{\forall z^{\fullname{x}}\,F(z)}\dfn\bigcup_{y\in x}\falsity{F[\fullname{y}]},\]
for all $c\subseteq \mathcal{M}^\mathcal{K}, x\in\mathcal{M}$, which mimic restricted quantification since the formulas 
    \[\forall z\rlzin {\gimel(c)}\, F(z)\leftrightarrow\forall z^{\gimel(c)}\,F(z),\quad\forall z\rlzin {\fullname{x}}\, F(z)\leftrightarrow\forall z^{\fullname{x}}\,F(z),\] 
are realized by a term which does not depend on $c, x$ nor $F$ (see for instance \cite{FontanellaM25}). 

\subsection{Ordinals in realizability models}

We discuss here the notion of ordinals in $\mathrm{ZF}_{\rlzin}$, starting by adapting some definitions of order theory to our framework.

\begin{definition}
    In $\mathrm{ZF}_{\rlzin}$, a set $a$ is 
    \begin{itemize}
    \item \emph{$\rlzin$-transitive} if for $x\rlzin a$, $y\rlzin x$ implies $y\rlzin a$;
    \item \emph{$\rlzin$-totally-ordered} (or $\rlzin$-TOD) if for all $x,y\rlzin a$ such that $x\neq y$, either $x\rlzin y$ or $y\rlzin x$ holds; 
    \item \emph{$\rlzin$-well-ordered} if it is $\rlzin$-TOD and for any $\rlzin$-subset of $a$ there exists a least element w.r.t. the $\rlzin$ relation. 
    \end{itemize}
By substituting $\rlzin$ by $\in$ (and $\neq$ by $\not\simeq$) in previous definition, we obtain the usual set theoretical notions of $\in$-transitive set, $\in$-totally-ordered set and $\in$-well-ordered set. 
\end{definition}

In order to establish a workable definition of ordinals in $\mathrm{ZF}_{\rlzin}$, consider the following modifications from standard set-theoretical definitions:
\begin{enumerate}
\item $x$ is a $\rlzin$-transitive set which is $\rlzin$-well-ordered;
\item $x$ is a $\rlzin$-transitive set which is $\rlzin$-TOD;
\item $x$ is a $\rlzin$-transitive set of $\rlzin$-transitive sets.
\end{enumerate}
From an extensional point of view (i.e. considering $\in$ instead of $\rlzin$ in theabove), the three definitions are equivalent and yield the usual notion of ordinal. The same doesn't hold for the $\rlzin$ relation, for $(1)$ and $(2)$ are equivalent yet stronger than $(3)$.
Furthermore, as shown in \cite{FontanellaM25}, it can be realized that the class of all $\rlzin$-transitive sets which are $\rlzin$-TOD is a set, thus the notion of $\rlzin$-TOD does not capture the whole class of $\in$-ordinals in the $\mathrm{ZF}$-model induced by the realizability model. This discussion motivates the following definition, while the subsequent proposition confirms that a $\rlzin$-ordinal is an ordinal for the extensional membership.

\begin{definition}
A $\rlzin$-ordinal is an $\rlzin$-transitive set of $\rlzin$-transitive sets. 
\end{definition}

\begin{proposition}[\cite{FontanellaM25}]
Every $\rlzin$-ordinal is an $\in$-ordinal (i.e. an $\in$-transitive set of $\in$-transitive sets). 
\end{proposition}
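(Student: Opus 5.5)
We argue inside $\mathrm{ZF}_{\rlzin}$, using only its axioms, which are realized by the theorem of \cite{DBLP:journals/aml/Krivine01} quoted above. This suffices because the realizability theory contains classical tautologies and is closed under modus ponens by the adequacy lemma, so anything derivable is realized. The property we need from $\mathrm{ZF}_{\rlzin}$ is the basic link between the two memberships: $z\rlzin x$ implies $z\in x$. This follows from the definition of $\subseteq$, since $x\subseteq x$ unfolds to $\forall z(z\rlzin x\rightarrow z\in x)$, and reflexivity of $\subseteq$ is a theorem of $\mathrm{ZF}_{\rlzin}$ (provable by $\rlzin$-induction, which is an axiom). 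Conversely, $z\in x$ means that there is $z'\rlzin x$ with $z'\simeq z$; this is the extensionality / definition-of-$\in$ axiom of $\mathrm{ZF}_{\rlzin}$, which is encoded in the falsity value of $\falsity{c\not\in d}$.

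\emph{Step 1: an $\rlzin$-transitive set is $\in$-transitive.} Let $a$ be $\rlzin$-transitive, and suppose $y\in x\in a$. Then there is $x'\rlzin a$ with $x'\simeq x$. From $y\in x$ and $x\subseteq x'$ we obtain $y\in x'$. Unfolding $\subseteq$ for $x\subseteq x'$ requires an $\rlzin$-witness, so we use that $y\in x$ gives some $y'\rlzin x$ with $y'\simeq y$, and then $y'\in x'$. This in turn gives some $y''\rlzin x'$ with $y''\simeq y'\simeq y$. By $\rlzin$-transitivity of $a$, $y''\rlzin a$, hence $y''\in a$. Since $\in$ is invariant under $\simeq$ in the left argument (a theorem of $\mathrm{ZF}_{\rlzin}$, together with transitivity of $\simeq$), we conclude $y\in a$. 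Hence $a$ is $\in$-transitive.

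\emph{Step 2: elements are $\in$-transitive.} Let $\alpha$ be an $\rlzin$-ordinal and $x\in\alpha$. Then there is $x'\rlzin\alpha$ with $x'\simeq x$. Since $x'$ is $\rlzin$-transitive, Step 1 makes it $\in$-transitive. Being $\in$-transitive is invariant under $\simeq$, because it is expressed using $\in$ only and $\in$ is $\simeq$-invariant in both arguments. So $x$ is $\in$-transitive. Together with Step 1 applied to $\alpha$ itself, this shows that $\alpha$ is an $\in$-transitive set of $\in$-transitive sets.

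\emph{Main obstacle.} The delicate part is justifying, inside $\mathrm{ZF}_{\rlzin}$, the facts about $\simeq$ used above: reflexivity and transitivity of $\simeq$, and invariance of $\in$ under $\simeq$ on both sides. These must be proved by simultaneous $\rlzin$-induction on the ranks of the sets involved, mirroring the mutually recursive definitions of $\falsity{c\not\in d}$ and $\falsity{c\subseteq d}$. I would isolate these facts as a preliminary lemma, proving the equalities and invariances by a joint induction, and then run Steps 1 and 2 as straightforward logical derivations.
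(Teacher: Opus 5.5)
The paper gives no proof of this proposition; it is imported from \cite{FontanellaM25}, so there is no in-text argument to compare yours against. Your derivation is correct on its own terms. Proving the statement inside $\mathrm{ZF}_{\rlzin}$ and then invoking Krivine's theorem that the axioms are realized, together with adequacy, is legitimate. It also has the advantage of automatically yielding a single proof-like realizer that does not depend on the set, which is the form in which such facts are used elsewhere in the paper.

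Two small remarks tighten the argument.

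First, reflexivity of $\subseteq$ is not actually needed. In Step 1, $y\simeq y''$ and $y''\rlzin a$ give $y\in a$ directly from the defining axiom of $\in$, so the detour through $y''\in a$ and left-invariance can be dropped. The only nontrivial ingredient in either step is then transitivity of $\simeq$ (symmetry is immediate from the definition). Transitivity also yields the right-compatibility $u\in x\land x\subseteq x'\rightarrow u\in x'$ that you use in Step 2.

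Second, the ``main obstacle'' is lighter than a simultaneous induction on several ranks. A single $\rlzin$-induction on $x$ of the symmetrized statement
\[\forall y\forall z\bigl[(x\subseteq y\land y\subseteq z\rightarrow x\subseteq z)\land(z\subseteq y\land y\subseteq x\rightarrow z\subseteq x)\bigr]\]
already goes through. In each conjunct, the witness that must be shown $\simeq$-equal to another set is an $\rlzin$-element of $x$. The two conjuncts of the induction hypothesis, applied to that witness, supply the two inclusions that make up $\simeq$.
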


\begin{comment}
From an extensional point of view, the three definitions yield the same notion of ordinal. Indeed, in $\mathrm{ZF}$, an ordinal can be equivalently defined as 
\begin{enumerate}
\item a $\in$-transitive set which is $\in$-well-ordered;
\item a $\in$-transitive set which is $\in$-TOD;
\item a $\in$-transitive set of $\in$-transitive sets.
\end{enumerate}
The same doesn't hold for the $\rlzin$ relation. In $\mathrm{ZF}_{\rlzin},$ replacing $\in$ with $\rlzin$ in $(1)-(3)$, these notions are no longer equivalent: $(1)$ and $(2)$ are equivalent yet stronger than $(3)$. Furthermore, as shown in \cite{FontanellaM25}, it can be realized that the class of all $\rlzin$-transitive sets which are $\rlzin$-TOD is a set, thus the notion of $\rlzin$-TOD does not capture the whole class of $\in$-ordinals in the $\mathrm{ZF}$-model induced by the realizability model. A more appropriate translation of the notion of ordinal in the language of  $\mathrm{ZF}_{\rlzin}$ is the one of an $\rlzin$-transitive set of $\rlzin$-sets. Indeed, an $\rlzin$-ordinal in this sense is also an $\in$-ordinal (intended as an $\in$-transitive set of $\in$-transitive sets) as proven in \cite{FontanellaM25}. Moreover, we shall see hereafter that, using the reish operator, we can define a class of $\rlzin$-ordinals that is extensionally equivalent to the class of $\in$-ordinals. 
\end{comment}

We shall briefly discuss here two classes of names representing ordinals. One uses the reish operator. 

\begin{definition}
In $\mathcal{M}$, define 
\[\fullname {Ord}:=\{(\fullname \alpha, \pi);\ \alpha\in Ord \textrm{ and }\pi\in \Pi\}.\]
We call \emph{full ordinals} the names of the form $\fullname \alpha$ where $\alpha$ is an ordinal in the ground model. 
\end{definition}

\begin{proposition}[\cite{FontanellaM25}]
For every ordinal $\alpha,$ one can realize that $\fullname{\alpha}$ is an $\rlzin$-ordinal (hence an $\in$-ordinal) with a proof-like term that does not depend on $\alpha.$
\end{proposition}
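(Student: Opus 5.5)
We have to exhibit proof-like terms realizing two statements about $\fullname{\alpha}$, uniformly in $\alpha$. First, $\fullname{\alpha}$ is $\rlzin$-transitive, i.e. $\forall x\rlzin\fullname{\alpha}\,\forall y\rlzin x\,(y\rlzin\fullname{\alpha})$. Second, every $\rlzin$-element of $\fullname{\alpha}$ is itself $\rlzin$-transitive. By the remark on restricted quantifiers, $\forall z\rlzin\fullname{x}\,F(z)\leftrightarrow\forall z^{\fullname{x}}F(z)$ is realized by a term independent of $x$ and $F$. So we may replace the $\rlzin$-quantifiers over full names by relativized quantifiers $\forall z^{\fullname{x}}$, at the cost of composing with these fixed terms. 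This applies as long as the bounding set is a full name.

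\textbf{Transitivity.} With relativized quantifiers the statement reads $\forall x^{\fullname{\alpha}}\forall y^{x}(y\rlzin\fullname{\alpha})$, where $x=\fullname{\beta}$ for some $\beta<\alpha$. The inner quantifier then ranges over $\fullname{\beta}$, so its instances are $y=\fullname{\gamma}$ with $\gamma<\beta$. By transitivity of $\alpha$ in $\mathcal{M}$ we have $\gamma<\alpha$, so $\langle\fullname{\gamma},\pi\rangle\in\fullname{\alpha}$ for every $\pi\in\Pi$.

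We then compute falsity values. A stack in $\falsity{\fullname{\gamma}\rlzin\fullname{\alpha}}=\falsity{\fullname{\gamma}\notrlzin\fullname{\alpha}\to\bot}$ has the form $t\centerdot\pi$ with $t\Vdash\fullname{\gamma}\notrlzin\fullname{\alpha}$. Since $\falsity{\fullname{\gamma}\notrlzin\fullname{\alpha}}=\Pi$, we get $t\star\pi\in\Perp$. Hence $\lambda k.k\,x$ (or the identity-like term $\lambda k.(k)k$) realizes it. More simply, $t\in\falsitydual{\bot}$ already holds, so $\lambda t.t\,\mathsf{I}$ works; in fact any $\lambda t.t\,u$ does. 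The realizer for transitivity is therefore a composition of the fixed relativization terms with a constant term of this kind, and none of them depends on $\alpha$.

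\textbf{Elements are transitive.} Relativizing again, we must realize $\forall x^{\fullname{\alpha}}\,\mathrm{Trans}(x)$. Each instance is $\mathrm{Trans}(\fullname{\beta})$ for some ordinal $\beta$. By the first part applied to $\beta$, this is realized by the same uniform term, and uniformity is exactly what makes a single term work across all instances. Combining both parts with the pairing term for $\land$ gives one proof-like term $\theta$ with $\theta\Vdash\fullname{\alpha}$ is an $\rlzin$-ordinal for all $\alpha$. The preceding proposition then yields that $\fullname{\alpha}$ is also an $\in$-ordinal.

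\textbf{Main obstacle.} The delicate point is handling the nested quantifier $\forall y\rlzin x$ when $x$ is a variable rather than a known full name. To realize the unrelativized formula, one must apply the equivalence lemma inside, after instantiating $x$ with $\fullname{\beta}$. Applying it under a binder requires the equivalence realizer to be uniform in $x$, which it is. This step relies on the realizer of the equivalence not depending on $x$.
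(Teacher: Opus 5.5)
Your proof is correct. The paper gives no proof of this proposition (it is quoted from \cite{FontanellaM25}), so there is no in-paper argument to compare against. Your falsity-value computation is the natural one, and it rests on two facts: $\falsity{c\notrlzin\fullname{\beta}}$ is empty unless $c=\fullname{\gamma}$ for some $\gamma<\beta$, and $\falsity{\fullname{\gamma}\notrlzin\fullname{\alpha}}=\Pi$ whenever $\gamma<\alpha$.

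The same two facts make the step you flag as the main obstacle unnecessary. The term $\lambda xyz.\,x(yz)$ realizes the unrelativized statement $\forall x(x\rlzin\fullname{\alpha}\imp\forall y(y\rlzin x\imp y\rlzin\fullname{\alpha}))$ directly. Take a stack $t_1\centerdot t_2\centerdot t_3\centerdot\pi$ (for names $c,d$). Either $\falsity{c\notrlzin\fullname{\alpha}}=\emptyset$, in which case $t_1$ accepts any argument. Or $c=\fullname{\beta}$ with $\beta<\alpha$. Then $\fullname{\beta}\subseteq\fullname{\alpha}$ gives $t_3\in\falsitydual{d\notrlzin\fullname{\alpha}}\subseteq\falsitydual{d\notrlzin\fullname{\beta}}$, hence $t_2t_3\in\falsitydual{\bot}=\falsitydual{\fullname{\beta}\notrlzin\fullname{\alpha}}$. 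So no equivalence lemma is needed under a binder, and your relativization route buys only a more modular presentation.

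Two cosmetic slips. First, $\lambda k.k\,x$ is not closed. Second, in $\lambda t.t\,u$ the argument $u$ must be proof-like for the realizer to be proof-like. Simplest is to note that for $\gamma<\alpha$ the formula $\fullname{\gamma}\rlzin\fullname{\alpha}$ has the same falsity value as $\bot\imp\bot$, which $I$ realizes.
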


We can expand the language with a predicate for $\fullname{Ord}$ by extending the definition of the truth and falsity values 
in the obvious way for formulas of the form $x\rlzin \fullname {Ord}$, $x\in \fullname {Ord}$ and $x \subseteq \fullname {Ord}$, 
then as proven in \cite{FontanellaM25}, every $\in$-ordinal in the realizability model is extensionally equal to an element of $\fullname {Ord}.$

\begin{proposition}[\cite{FontanellaM25}]
$\Vdash \forall x(x \textrm{ is an $\in$-ordinal }\iff x\in \fullname{Ord})$
\end{proposition}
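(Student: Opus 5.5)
We prove the two implications separately and then combine the realizers; both directions rest on the earlier proposition that each $\fullname{\alpha}$ is realized to be an $\rlzin$-ordinal, hence an $\in$-ordinal, uniformly in $\alpha$.

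\emph{Right-to-left.} Suppose $x\in\fullname{Ord}$. Unfolding the falsity value of $\in$, this means there is a full ordinal $\fullname{\alpha}$ with $x\simeq\fullname{\alpha}$. The earlier proposition gives that $\fullname{\alpha}$ is an $\in$-ordinal, via a proof-like term independent of $\alpha$. Being an $\in$-transitive set of $\in$-transitive sets is invariant under $\simeq$, and this is provable in $\mathrm{ZF}_\varepsilon$, so adequacy gives a realizer. Composing the two realizers yields a uniform realizer of $x\in\fullname{Ord}\to x$ is an $\in$-ordinal.

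\emph{Left-to-right.} This is the hard direction. Fix a name $x$ realized to be an $\in$-ordinal. I would argue by $\in$-induction in the realizability model; the foundation scheme is realized, so it suffices to realize the induction step. Assume every $y\in x$ is extensionally equal to some full ordinal. Define in $\mathcal{M}$ the name
\[c=\{\langle\fullname{\beta},\pi\rangle\mid \beta\in Ord,\ \pi\in\falsity{\fullname{\beta}\notin x}\},\]
which is a set because the relevant $\beta$ can be bounded by the rank of $x$. One realizes $c\subseteq x$ directly. For the reverse inclusion $x\subseteq c$, given $y\rlzin x$, use the induction hypothesis to obtain $\beta$ with $y\simeq\fullname{\beta}$, so that $\fullname{\beta}\in x$ and hence $\fullname{\beta}\in c$.

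It remains to show that $c$ is equivalent to a single full ordinal $\fullname{\gamma}$, and this is where I expect the main obstacle. The plan is to take $\gamma$ the least ordinal such that $\fullname{\gamma}\notin x$ is realized, and then use transitivity of $x$ together with the comparability of full ordinals: $\fullname{\beta}\in\fullname{\gamma}$ is realized exactly when $\beta<\gamma$, with uniform realizers. The difficulty is that $\falsity{\fullname{\beta}\notin x}$ varies with $\beta$, and the conclusion must be extracted classically via $\cc$, showing that membership of full ordinals in $x$ is downward closed. If a direct argument fails, I would instead cite the analysis of \cite{FontanellaM25}, where $\fullname{Ord}$ is shown to be $\simeq$-cofinal among $\in$-ordinals. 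Throughout, the realizers must be kept uniform in $x$ so that a single proof-like term realizes the universally quantified statement.
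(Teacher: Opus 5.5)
Your right-to-left direction is fine. The left-to-right direction has a genuine gap, at exactly the step you flag: choosing $\gamma$ \emph{in $\mathcal{M}$} as the least ordinal with $\Vdash\fullname{\gamma}\notin x$ and then concluding $c\simeq\fullname{\gamma}$. This cannot work, because realizability is not bivalent. For $\beta<\gamma$, failure of $\Vdash\fullname{\beta}\notin x$ does not give $\Vdash\fullname{\beta}\in x$. In general there is no ground-model $\gamma$ at all with $\Vdash x\simeq\fullname{\gamma}$.

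A counterexample exists already in $\mathcal{K}(\mathbb{B})$. Take $\mathbf{0}<b<\mathbf{1}$ and $x=\{\langle\fullname{0},\pi\rangle\mid\pi^{\mathbb{B}}\le b\}$. Then $x$ is realized to be an $\in$-ordinal, and $\fullname{1}\notin x$ is realized while $\fullname{0}\notin x$ is not, so your $\gamma$ is $1$. Yet $\fullname{0}\in x$ (equivalently $\fullname{1}\subseteq x$) is not realized either: it fails against $k_{\neg b}\centerdot\pi$. The witness of $x\in\fullname{Ord}$ has to stay inside the classical existential, and no use of $\cc$ will extract it in $\mathcal{M}$; downward closure of $c$ holds only internally. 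Your fallback of citing \cite{FontanellaM25} just cites the statement being proved.

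The bound on $\beta$ that you use to make $c$ a set is also not automatic. The set $\falsity{\fullname{\beta}\notin x}$ does not become empty above the rank of $x$: any $\langle e,\pi\rangle\in x$ contributes stacks $t\centerdot t'\centerdot\pi$ with $t,t'$ of the form $k_{\pi'}u$ where $u\star\pi'\in\Perp$, and such terms lie in every truth value. What is true is that $\fullname{\beta}\notin x$ is \emph{realized}.

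That last fact is the missing idea: an unboundedness lemma for full ordinals. Once you have it, the $\in$-induction and the name $c$ become unnecessary.

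\emph{The lemma.} Let $\mathrm{rk}(d)=\sup\{\mathrm{rk}(e)+1\mid\langle e,\pi\rangle\in d\}$ be the rank of a name. Let $R=Y(\lambda r t t'.\,t'r)$ with $Y$ Turing's fixpoint, so that $R\star t\centerdot t'\centerdot\pi\dsucc t'\star R\centerdot\pi$. Prove by induction on $\alpha$ that $R\in\falsitydual{\fullname{\alpha}\notin d}$ for every name $d$ with $\mathrm{rk}(d)\le\alpha$:
\begin{itemize}
\item A stack of $\falsity{\fullname{\alpha}\notin d}$ has the form $t\centerdot t'\centerdot\pi$ with $\langle e,\pi\rangle\in d$ and $t'\in\falsitydual{\fullname{\alpha}\subseteq e}$.
\item For $\beta=\mathrm{rk}(e)<\alpha$, the induction hypothesis gives $R\in\falsitydual{\fullname{\beta}\notin e}$.
\item Since $\langle\fullname{\beta},\pi\rangle\in\fullname{\alpha}$, this gives $R\centerdot\pi\in\falsity{\fullname{\alpha}\subseteq e}$, hence $t'\star R\centerdot\pi\in\Perp$.
\end{itemize}

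\emph{Finishing the argument.} Given an $\in$-ordinal $x$, put $\alpha=\mathrm{rk}(x)$.
\begin{itemize}
\item Trichotomy of $\in$-ordinals is a theorem of ZF, hence realized by adequacy. Combined with the fact that $\fullname{\alpha}$ is an $\in$-ordinal and with $R$, it yields $x\in\fullname{\alpha}\vee x\simeq\fullname{\alpha}$.
\item The identity realizes $x\in\fullname{\alpha}\to x\in\fullname{Ord}$, because $\falsity{x\notin\fullname{\alpha}}\subseteq\falsity{x\notin\fullname{Ord}}$.
\item The term $\lambda p r.\,p(\lambda a b.\,r b a)$ realizes $x\simeq\fullname{\alpha}\to x\in\fullname{Ord}$.
\end{itemize}
None of these terms depends on $x$ or $\alpha$, so together they give the required uniform realizer.
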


Our main results will deal with functions on ordinals (choice functions, functions that collapse a cardinal to a smaller ordinal and other examples). In $\mathrm{ZF}_{\rlzin},$ a function may be compatible with the strict identity but not necessarily with the extensional equality: $x\simeq y$ does not imply $f(x)\simeq f(y).$ For this reason it would be useful to work with a representation of ordinals that behaves well with the two equalities, unfortunately full ordinals are not suitable for that: given $x,y\rlzin \fullname \alpha,$ the equality $x\simeq y$ does not imply $x=y.$ To get this feature, another class of representatives for ordinals was introduced in \cite{FontanellaG20}, called 
\emph{hat ordinals}. 

\begin{definition}\label{def: hat ordinals}
%Assume that there are pairwise distinct terms 
We fix a sequence of terms 
$(\underline{\xi})_{\xi<|\mathcal{K}|},$ 
For every ordinal $\alpha\leq |\mathcal{K}|$ in the ground model $\mathcal{M},$ we inductively define 
    \[\hat{\alpha}\dfn\{\langle\hat{\xi},\underline{\xi}\centerdot\pi\rangle\mid\xi<\alpha, \pi\in\Pi\}\]
Names of the form $\hat{\alpha}$ are called \emph{hat ordinals}.
\end{definition}

The ordinal terms $(\underline{\xi})_{\xi<|\mathcal{K}|}$ shall be thought as generalizations of numerals; they may be special instructions (but they are not necessarily proof-like terms). In general, they will be taken pairwise distinct unless we are dealing with the realizability algebra induced by a boolean algebra (see Section \ref{sec: boolean closure}).
%The underlined terms of the form $\underline{\xi}$ act as a generalized version of numerals.
In Definition \ref{def:numerals}, we will specify which conditions must be satisfied by the terms $\underline{\xi}$ for our construction to work.\footnote{In \cite{FontanellaG20}, $\underline{\xi}$ corresponds to the $\xi$-th term in some fixed enumeration of the $\lambda_\mathsf{c}$-terms and in order to realize fragments of the Axiom of Choice in the corresponding model, it is essential that every term can be written as $\underline{\xi}$ for some $\xi<|\mathcal{K}|$. This requirement will not be necessary in this paper.}

As for full names and gimel names, we can define restricted quantifiers for hat ordinals: given a formula $F(x)\in\mathscr{F}_\varepsilon$ and $\alpha<|\mathcal{K}|$, the falsity value of $\forall x^{\hat{\alpha}}\,F(x)$ is defined as
    \[\falsity{\forall x^{\hat{\alpha}}\,F(x)}\dfn\bigcup_{\beta<\alpha}\{\underline{\beta}\centerdot\pi\mid\pi\in\falsity{F[\hat{\beta}]}\}.\]
As before, the formula $\forall x\rlzin \hat{\alpha}\, F(x)\leftrightarrow\forall x^{\hat{\alpha}}\,F(x)$ is realized by a proof-like term which does not depend on $\hat{\alpha}$ nor $F$ (see for instance \cite{FontanellaM25}).

\begin{lemma}\cite[Proposition 3.14]{FontanellaG20}\label{lem: inclusione hat ordinals}
There is a proof-like term that realizes $\hat{\gamma}\subseteq \hat{\beta},$ for every $\gamma\leq \beta.$  
\end{lemma}

%\begin{proof}

    %The formula $\hat{\gamma}\subseteq\hat{\beta}$ is equivalent to 
    %\[
    %    \forall x(x\not\in\hat{\beta}\rightarrow x\not\rlzin\hat{\gamma})\equiv\forall x(\forall y(y\rlzin\hat{\beta},x\simeq y\rightarrow\bot)\rightarrow x\not\rlzin\hat{\gamma})
    %\]
    %therefore considering this latter, a stack in its falsity value, has the form $t\centerdot\underline{\xi}\centerdot\rho$ where $\xi<\gamma$, $t\in\falsitydual{\forall y(y\rlzin\hat{\beta},\hat{\xi}\simeq y\rightarrow\bot)}$, $\rho\in\Pi$. Using Turing fixpoint, one can define a proof-like term $W$
    %(that does not depend on $\gamma,$ nor $\beta$) such that $W\Vdash \forall x(x\simeq x)$ (see for instance \cite{DBLP:journals/Richard}). Moreover, we have $\lambda x. x\underline{\xi}\Vdash \hat{\xi}\rlzin\hat{\beta}$ (see \cite{FontanellaG20}). Thus, 
    %$(t(\lambda x. x\underline{\xi}))W\in\falsitydual{\bot}$, hence $\lambda fn. (f(\lambda x. xn))W$ is a suitable proof-like term realizing the required formula.
%\end{proof}

\begin{lemma} (\cite[Proposition 3.11]{FontanellaG20}, see also \cite{FontanellaM25})\label{lem: hat ordinals are epsilon ordinals}
For every ordinal $\alpha\leq |\mathcal{K}|,$ there is a proof-like term not depending on $\alpha$ that realizes $\hat{\alpha}$ is an $\rlzin$-ordinal.  
\end{lemma}

By adding to the calculus (and to the proof-like terms) a special instruction that we call here \emph{trichotomy operator}, one can realize that hat ordinals are $\rlzin$-TOD sets. 

%We need furthermore to suppose the existence of a trichotomy operator $\chi$ in $\Lambda_\mathsf{c}$, which is used to properly manage accumulators. 

\begin{definition}\label{def:trichotomyoperator}
A trichotomy operator for a sequence of terms $(\underline{\xi})_{\xi<|\mathcal{K}|}$ is a term $\chi\in\Lambda_\mathsf{c}$ such that for any $\alpha,\beta<\kappa$ and any $t,u\in\Lambda_\mathsf{c}, \pi\in\Pi$ we have 
         \[   
           (\chi)\quad \chi\underline{\alpha}\,fg\underline{\beta}\star\pi\succ_{\chi}
                \begin{cases}
                    (f)\underline{\beta}\star\pi&\textrm{if } \beta<\alpha;\\
                    g\star\pi&\textrm{ if $\beta\geq \alpha$}
                \end{cases}
        \]
\end{definition}

Given a term $f,$ the trichotomy operator is used to represent sequences $(f\underline 0, ..., f\underline{\alpha}, \underline{0}, ...\underline{0}...),$ since for instance $\chi \underline{\alpha+1} f \underline{0}\underline{\beta}$ reduces to $f \underline{\beta}$ if $\beta\leq \alpha$ and to $\underline{0}$ otherwise.
%represents the sequence $(f\underline 0, ..., f\underline{\alpha}, \underline{0}, ...\underline{0}...)$.

\begin{lemma}\label{lem: trichotomy operator} 
Let $\mathcal{K}$ be a realizability algebra containing the trichotomy operator $\chi$ as a proof-like term. Then, for every ordinal $\alpha\leq |\mathcal{K}|,$ one can realize that $\hat{\alpha}$ is an $\rlzin$-TOD by a proof-like term that does not depend on $\alpha.$ 
\end{lemma}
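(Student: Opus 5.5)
The plan is to realize the restricted form of $\rlzin$-TOD and then transfer it to the unrestricted form with the proof-like term from the restricted-quantifier equivalence $\forall x\rlzin\hat{\alpha}\,F(x)\leftrightarrow\forall x^{\hat{\alpha}}F(x)$ (used twice, once per quantifier). Unfolding the defined connectives, it suffices to give a proof-like term $\theta$, independent of $\alpha$, with
\[\theta\Vdash\forall x^{\hat{\alpha}}\forall y^{\hat{\alpha}}\bigl(x\neq y\rightarrow (x\rlzin y\rightarrow\bot)\rightarrow(y\rlzin x\rightarrow\bot)\rightarrow\bot\bigr).\]
A stack in the falsity value of this formula has the form $\underline{\xi}\centerdot\underline{\eta}\centerdot t\centerdot u\centerdot v\centerdot\pi$, where:
\begin{itemize}
\item $\xi,\eta<\alpha$ and $\pi\in\Pi$ is arbitrary;
\item $t\Vdash\hat{\xi}\neq\hat{\eta}$, $u\Vdash\hat{\xi}\rlzin\hat{\eta}\rightarrow\bot$ and $v\Vdash\hat{\eta}\rlzin\hat{\xi}\rightarrow\bot$.
\end{itemize}

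Two auxiliary facts are needed. First, if $\xi<\eta$ then $\lambda w.w\underline{\xi}\Vdash\hat{\xi}\rlzin\hat{\eta}$. Indeed, $\hat{\xi}\rlzin\hat{\eta}$ is $\hat{\xi}\notrlzin\hat{\eta}\rightarrow\bot$, and $\falsity{\hat{\xi}\notrlzin\hat{\eta}}$ contains every stack $\underline{\xi}\centerdot\rho$. So any $s\Vdash\hat{\xi}\notrlzin\hat{\eta}$ satisfies $s\star\underline{\xi}\centerdot\rho\in\Perp$, and $\lambda w.w\underline{\xi}\star s\centerdot\rho$ reduces to exactly this process. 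Second, if $\xi=\eta$ then $\falsity{\hat{\xi}=\hat{\eta}}=\emptyset$. Hence every term realizes $\hat{\xi}=\hat{\eta}$, and $t\Vdash\hat{\xi}\neq\hat{\eta}$ gives $t\star s\centerdot\pi\in\Perp$ for any term $s$, e.g.\ $s=\lambda z.z$.

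The case split $\eta<\xi$, $\xi<\eta$, $\xi=\eta$ is implemented with two nested calls to the trichotomy operator $\chi$ (Definition~\ref{def:trichotomyoperator}). I would take
\[\theta\dfn\lambda x y t u v.\;\chi\, x\,(\lambda z.v(\lambda w.wz))\,\bigl(\chi\, y\,(\lambda z.u(\lambda w.wz))\,(t\,\lambda z.z)\,x\bigr)\,y .\]
Running $\theta$ on the stack above, the first $\chi$ compares $\eta$ with $\xi$.
\begin{itemize}
\item If $\eta<\xi$, we reach $v\star(\lambda w.w\underline{\eta})\centerdot\pi$. This is in $\Perp$ because $\lambda w.w\underline{\eta}\Vdash\hat{\eta}\rlzin\hat{\xi}$.
\item If $\eta\geq\xi$, the second $\chi$ compares $\xi$ with $\eta$.
\begin{itemize}
\item If $\xi<\eta$, we reach $u\star(\lambda w.w\underline{\xi})\centerdot\pi\in\Perp$.
\item Otherwise $\xi=\eta$, and we reach $t\star(\lambda z.z)\centerdot\pi\in\Perp$ by the second auxiliary fact.
\end{itemize}
\end{itemize}
Closure of $\Perp$ under anti-reduction concludes. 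The term $\theta$ is proof-like because $\chi$ is proof-like and the rest is a closed $\lambda$-term, and it does not depend on $\alpha$.

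The main obstacle is bookkeeping with the encodings. One must check that the falsity value of $\hat{\xi}\notrlzin\hat{\eta}$ is exactly what makes $\lambda w.w\underline{\xi}$ a realizer. One must also check that the trichotomy operator's case ``$\beta\geq\alpha$'' correctly collapses to equality after the first test. Finally, the composition with the restricted-quantifier equivalence has to be done so that the resulting term remains uniform in $\alpha$.
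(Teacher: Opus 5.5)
Your proof is correct, but it takes a different route from the paper's.

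\textbf{The paper's route.} The paper does not realize the definition of $\rlzin$-TOD verbatim. It realizes the reformulation $\forall x^{\hat\alpha}\forall y^{\hat\alpha}(x\notrlzin y,\ y\not\subseteq x\rightarrow\bot)$, i.e.\ ``$x\rlzin y$ or $y\subseteq x$'', with a single call to $\chi$:
\begin{itemize}
\item when $\beta<\gamma$, the realizer of $\hat\beta\notrlzin\hat\gamma$ is fed $\underline{\beta}$;
\item when $\gamma\leq\beta$, the extensional disjunct is discharged by the uniform realizer of $\hat\gamma\subseteq\hat\beta$ from Lemma~\ref{lem: inclusione hat ordinals}, which covers $\gamma<\beta$ and $\gamma=\beta$ at once.
\end{itemize}
This gives a shorter case analysis. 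However, the target formula is only asserted to be equivalent to the official definition, which uses Leibniz $\neq$ and $\rlzin$ in both disjuncts. Recovering ``$y\rlzin x$ or $y=x$'' from $y\subseteq x$ amounts to turning $\simeq$ into $=$ on elements of $\hat\alpha$. That needs a separate argument, since Lemma~\ref{lem: TODs are good} presupposes TOD.

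\textbf{Your route.} Your version realizes the definition exactly and needs no inclusion lemma. It uses the fact that $\falsity{\hat\xi=\hat\eta}$ is decided in the ground model, so the equality case is closed by feeding an arbitrary term to $t$. The price is the second, nested $\chi$ that separates $\xi<\eta$ from $\xi=\eta$.

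\textbf{One point of rigor}, shared with the paper's own term. Proof-like terms are only assumed closed under application, not under $\lambda$-abstraction. So rather than saying $\theta$ is proof-like because it is a closed $\lambda$-term plus $\chi$, write it as $(\lambda c.\,\theta_c)\,\chi$, where $\theta_c$ is obtained from $\theta$ by replacing both occurrences of $\chi$ with a fresh variable $c$. This is a closed pure $\lambda$-term applied to $\chi$, and one push and one grab bring it back to $\theta$.
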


This was proven in \cite{FontanellaG20}, however, since our definition of the trichotomy operator is slightly different than the definition of the analogous operator given in \cite{FontanellaG20} (essentially just by a permutation of the terms), we include a proof of this lemma.  

\begin{proof} 
%First we show that $\widehat{\alpha}$ is transitive, for that we show that $\theta=\lambda t.\lambda u.u\Vdash \forall x^{\widehat{\alpha}}\forall y(y\notrlzin \widehat{\alpha}\imp y\notrlzin x).$ Let $\beta<\alpha,$ $c\in \mathcal{M}^{\mathcal{K}},$ $u\in \falsitydual{c\notrlzin \widehat{\alpha}}$ and $\pi\in \falsity{c\notrlzin \widehat{\beta}},$ we want to show that $\theta\ast \underline{\beta}\centerdot u\centerdot \pi\in \Perp.$ We have $\theta\ast \underline{\beta}\centerdot u\centerdot \pi\succ u\ast \pi.$ Moreover, $\pi\in \falsity{c\notrlzin \widehat{\beta}},$ so $\falsity{c\notrlzin \widehat{\beta}}$ is not empty. Thus there exists $\gamma<\beta<\alpha$ such that $c=\widehat{\gamma},$ therefore $\falsity{c\notrlzin \widehat{\beta}}=\{\underline{\gamma}\centerdot\pi';\ \pi'\in \Pi\}=\falsity{c\notrlzin \widehat{\alpha}},$ hence $u\ast \pi\in \Perp.$

%An analogous argument shows that $\rlzin$ defines a strict order on $\widehat{\alpha}$. 

By Lemma \ref{lem: hat ordinals are epsilon ordinals}, we can realize that $\alpha$ is an $\rlzin$-ordinal, we want to realize that $\hat{\alpha}$ is $\rlzin$-totally-ordered. For that, we realize the equivalent formula $\forall x^{\hat{\alpha}}\forall y^{\hat{\alpha}}(x\notrlzin y,  y\not\subseteq x\imp \perp).$ 
%For that, we are going to use the instruction $\chi$. 
Let $\theta$ be the proof-like term in Lemma \ref{lem: inclusione hat ordinals} that realizes $\hat{\gamma}\subseteq \hat{\beta},$ for every $\gamma\leq \beta.$ We let  
$$t:=\lambda b.\lambda c.\lambda u.\lambda v.(\chi b t(v \theta)) c$$ 
and we show that $t\Vdash \forall x^{\hat{\alpha}}\forall y^{\hat{\alpha}}(x\notrlzin y,  y\not\subseteq x\imp \perp).$ Let $\beta,\gamma<\alpha,$ $u\in \falsitydual{\hat{\beta}\notrlzin\hat{\gamma}},$ $v\in \falsitydual{\hat{\gamma}\not\subseteq \hat{\beta}},$ $\pi\in \Pi.$ Either $\beta<\gamma$ or $\gamma\leq \beta.$
In the first case, we have  $t\star\underline{\beta}\centerdot\underline{\gamma}\centerdot u\centerdot v\centerdot \pi\dsucc u\star\underline{\beta}\centerdot \pi\in \Perp.$ 
Otherwise $\gamma\leq\beta,$ hence
$t\star\underline{\beta}\centerdot\underline{\gamma}\centerdot u\centerdot v\centerdot \pi\dsucc v\theta\ast \pi\in \Perp.$ 
\end{proof}

\begin{lemma}[\cite{FontanellaG20}]\label{lem: TODs are good}
In $\mathrm{ZF}_{\rlzin},$ if $\alpha$ is an $\rlzin$-TOD, then for every $x, y \rlzin \alpha,$  
$x\simeq y$ if and only if $x=y$. 
\end{lemma}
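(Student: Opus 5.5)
The argument will be carried out inside $\mathrm{ZF}_{\rlzin}$, using only its general axioms about $\rlzin$, $\in$, $\subseteq$ and the Leibniz equality $=$, and not the particular realizability algebra.

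The direction from $x=y$ to $x\simeq y$ is the easy one. The relation $=$ is Leibniz identity, realized equivalent to $\forall z(x\rlzin z\rightarrow y\rlzin z)$, and it therefore substitutes in every formula. Since $\forall x(x\simeq x)$ is a theorem of $\mathrm{ZF}_{\rlzin}$ (realized by a fixpoint term $W$), substituting $y$ for the second occurrence of $x$ in $x\simeq x$ gives $x\simeq y$.

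For the converse, fix $x,y\rlzin\alpha$ with $x\simeq y$ and suppose, for contradiction, that $x\neq y$. Because $\alpha$ is an $\rlzin$-TOD, we get $x\rlzin y$ or $y\rlzin x$, and by symmetry we may assume $x\rlzin y$. From $y\subseteq x$, i.e. $\forall z(z\rlzin y\rightarrow z\in x)$, we obtain $x\in x$. The goal is then to turn $x\in x$ into a contradiction.

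This last step is the main obstacle, because it relies on extensional regularity. In $\mathrm{ZF}_{\rlzin}$ the $\in$-relation satisfies foundation; more precisely, the induced extensional structure is a model of $\mathrm{ZF}$, and so $\forall x(x\notin x)$ holds there. I would cite this as part of the conservativity of $\mathrm{ZF}_{\rlzin}$ over $\mathrm{ZF}$: the $\in$-induction scheme is realized (Krivine) and it implies $x\notin x$. If one prefers to stay with $\rlzin$, there is an alternative. The statement $x\in x$ unfolds to: there exists $z\rlzin x$ with $z\simeq x$. This $z$ is $\rlzin$ both in $x$ and in $\alpha$, so transitivity of the $\rlzin$-order is needed here, and such a $z$ produces an infinite $\simeq$-descending pattern. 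That pattern is refuted by $\rlzin$-induction, which is realized in $\mathrm{ZF}_{\rlzin}$ through the Turing fixpoint. So I would argue by $\rlzin$-induction on $x$ that $\neg(x\in x)$, i.e. that no $z\rlzin x$ satisfies $z\simeq x$. The key point to verify is that $z\simeq x$ and $z\rlzin x$ together give $z\in z$ via $x\subseteq z$, so the induction hypothesis applied to $z$ closes the argument.

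Once $x\in x$ is refuted, $x\neq y$ is impossible, and classical logic yields $x=y$.
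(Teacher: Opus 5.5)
Your proof is correct and is the standard argument. The paper gives no proof of its own and only cites \cite{FontanellaG20}, whose reasoning is the same as yours: totality gives $x\varepsilon y$ or $y\varepsilon x$, the inclusion $y\subseteq x$ turns $x\varepsilon y$ into $x\in x$, and $x\in x$ is refuted by foundation for $\in$, which is either a theorem of $\mathrm{ZF}_\varepsilon$ or follows from $\varepsilon$-induction exactly as in your final step. One correction: your remark that ``transitivity of the $\varepsilon$-order is needed'' is wrong and should be dropped, since your induction never uses $z\varepsilon\alpha$, and the paper's notion of $\varepsilon$-TOD does not include transitivity anyway.
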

It follows that equality and identity coincide on hat ordinals. 

On the other hand there can be only as many hat ordinals as the size of the algebra since they are defined by associating a term to each one of their lower ordinal elements. 

\subsection{Functions in realizability models}

In this paper we will realize principles that involve functions (choice functions, cardinality thus bijections etc.). It is therefore important to discuss the notion of function in the context of the theory $\mathrm{ZF}_{\rlzin}.$ In fact, a function must be compatible with equality, namely $x$ equal to $y$ implies $f(x)$ equal to $f(y)$, but in $\mathrm{ZF}_{\rlzin}$ we deal with two equalities relations. Thus, we shall distinguish the cases of functions either in the sense of extensional
equality or non-extensional equality. Let $(x,y)$ denote the ordered pair of elements $x, y$.

\begin{definition} In the theory $\mathrm{ZF}_{\rlzin},$ let $f$ and $a$ be two sets, we say that   
\begin{itemize}
\item $f$ is an $\in$-function on $a$ if\\ 
$\forall x\rlzin a\exists y\ (x, y)\rlzin f$ and $\forall x, x'\rlzin a \forall y, y'( (x, y)\rlzin f\land (x', y')\rlzin f \land x\simeq x')\imp y\simeq y'$ 
\item $f$ is an $\rlzin$-function on $a$ if\\ $\forall x\rlzin a\exists y\ (x, y)\rlzin f$ and $\forall x\rlzin a \forall y, y'( (x, y)\rlzin f\land (x, y')\rlzin f)\imp y= y'$ 
\end{itemize}
\end{definition}

One can define a (class-)function $\op(\cdot, \cdot):M^\mathcal{K}\times M^\mathcal{K}\to M^\mathcal{K}$ in the ground model, that is interpreted as the ordered pair in realizability models. We omit the details of the definition that can be found in \cite{DBLP:journals/Richard}. We just point out that this (functional) name can be realized to satisfy the expected properties for the ordered pair; in particular, it is compatible with extensional equality, that is the following is realized
%the following formula is realized: 
$$\forall x\forall x'\forall y\forall y'(\op(x,y)\simeq\op(x',y')\leftrightarrow (x\simeq x'\land y\simeq y')).$$ 

The function $\op$ can then be used to `lift' functions in the ground models to $\rlzin$-functions in realizability models.

\begin{definition}
    Let $a\in M^\mathcal{K}$ and $f:\dom{a}\to M^\mathcal{K}$. We define the \emph{lift of $f$} as
        \[\tilde{f}\dfn\{\langle\op(x,f(x)),\pi\rangle\mid\langle x,\pi\rangle\in a\}.\]
\end{definition}

\begin{proposition}[\cite{DBLP:journals/Richard}]\label{prop:functionlift} 
    There exists a term $\theta_1\in\qproofs$ such that, for all $a\in M^\mathcal{K}$ and
    $f:\dom{a}\to M^\mathcal{K}$, we have $\theta_1\Vdash \tilde{f}\;\text{is an $\rlzin$-function}$.    
%    (i.e. $\forall x, y, y'(\op(x, y)\rlzin f, \op(x, y')\rlzin f\imp y=y')$) 
\end{proposition}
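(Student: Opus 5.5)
Two things have to be realized, uniformly in $a$ and $f$: totality, $\forall x\rlzin a\,\exists y\,(x,y)\rlzin\tilde f$, and functionality, $\forall x\rlzin a\,\forall y,y'\,((x,y)\rlzin\tilde f\land(x,y')\rlzin\tilde f)\imp y=y'$. The term $\theta_1$ will be a closed $\lambda$-term (hence proof-like) built from realizers of these two parts, combined with the standard pairing combinator $\lambda z.z\,t_1t_2$ for $\land$. The approach is to unfold falsity values directly. The key observation is that
\[\falsity{\op(x,y)\notrlzin\tilde f}=\{\pi\mid \langle x,\pi\rangle\in a,\ y=f(x)\},\]
which follows from injectivity of $\op$ in the ground model. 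In particular this falsity value is empty whenever $y\neq f(x)$ in $\mathcal M$.

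\textbf{Totality.} Write $\exists y\,G$ as $\forall y(G\imp\bot)\imp\bot$. Take $x$ and $u\Vdash x\notrlzin a\imp\bot$ (the negated-membership form of $x\rlzin a$ that $\forall x\rlzin a$ unfolds to), and a stack $v\centerdot\rho$ with $v\Vdash\forall y((x,y)\rlzin\tilde f\imp\bot)$.

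Instantiate $y:=f(x)$. It then suffices to produce a realizer of $(x,f(x))\rlzin\tilde f$, i.e. of $\op(x,f(x))\notrlzin\tilde f\imp\bot$, from $u$. For any $w\Vdash \op(x,f(x))\notrlzin\tilde f$, the displayed identity shows that $\falsity{\op(x,f(x))\notrlzin\tilde f}$ coincides with $\falsity{x\notrlzin a}$. Hence $w\Vdash x\notrlzin a$ and $u\star w\centerdot\pi\in\Perp$. So $u$ itself realizes $(x,f(x))\rlzin\tilde f$.

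The totality realizer is therefore $\lambda u\lambda v.\,vu$, possibly composed with the term converting $\forall x\rlzin a$ into its relativized form, which is uniform.

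\textbf{Functionality.} Fix $x,y,y'$, realizers $r\Vdash (x,y)\rlzin\tilde f$ and $r'\Vdash(x,y')\rlzin\tilde f$, and a stack $\pi\in\falsity{y=y'}$. The latter forces $y\neq y'$ in $\mathcal M$, so one of them, say $y$, differs from $f(x)$. Then $\falsity{\op(x,y)\notrlzin\tilde f}=\emptyset$, so every term realizes $\op(x,y)\notrlzin\tilde f$. Feeding any term into $r$, for instance $r\star r\centerdot\pi\in\Perp$, or $r'\star r'\centerdot\pi\in\Perp$ in the symmetric case, gives the result.

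Because we do not know which of the two cases holds, I will first try to make the realizer uniform with a sequential trick such as $\lambda r\lambda r'.\,r(r'\,\cdot)$. Here $r$ is run on the argument $\lambda s.\,r'(\lambda\_.\,k)$ or similar, relying on the fact that if $y=f(x)$, any argument given to $r$ must realize a membership whose falsity is a copy of $\falsity{x\notrlzin a}$. The cleaner fallback is to use $\cc$: $\lambda r\lambda r'.\cc\,\lambda k.\,r(\lambda z.\,r'(\lambda z'.kz))$, checking both cases.

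I expect this uniform handling of the two equality-failure cases to be the main obstacle. It is also where the exact definition of $\op$ from \cite{DBLP:journals/Richard} matters, in order to justify the displayed falsity-value identity (injectivity and the behaviour on $\notrlzin$).
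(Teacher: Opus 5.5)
\textbf{Totality.} This half is correct. It is the same computation as in Proposition~\ref{prop:domain of a function}, and $\lambda u\lambda v.\,vu$ works. One small point: when $x\notin\dom{a}$ you cannot instantiate $y:=f(x)$, but then $\falsity{x\notrlzin a}$ and every $\falsity{\op(x,y)\notrlzin\tilde f}$ are empty, so any $y$ will do.

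\textbf{Functionality: the gap.} This is the real content of the statement, and you never commit to a realizer. The two concrete arguments you describe handing to $r$ both fail: $\lambda s.\,r'(\lambda s'.\,k)$, and, in your ``cleaner'' $\cc$ fallback, $\lambda z.\,r'(\lambda z'.\,kz)$. In the case $y=f(x)\neq y'$, the term given to $r$ must be orthogonal to every stack of $\falsity{\op(x,f(x))\notrlzin\tilde f}=\{\rho\mid\langle x,\rho\rangle\in a\}$. Since $a$ is an arbitrary name, this set can contain stack bottoms $\pi_0\in\Pi_0$; for instance with $a=\gimel(\{x\})$ it is all of $\Pi$. A $\lambda$-abstraction facing an atomic stack does not reduce, and such a process need not be in $\Perp$; in the algebra of Section~\ref{subsec: exemple} it is not. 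Concretely, take $a=\gimel(\{x\})$ and $r=\lambda w.w$, which lies in $\falsitydual{\op(x,f(x))\rlzin\tilde f}$ because $\falsity{x\notrlzin a}=\Pi$. With final stack $\pi_0$, your $\cc$-term reduces to the stuck process $\lambda z.\,r'(\lambda z'.\,k_{\pi_0}z)\star\pi_0$.

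\textbf{The missing observation.} Neither control nor a $\lambda$-wrapper is needed. If $y'\neq f(x)$ (or $x\notin\dom{a}$), then $\falsity{\op(x,y')\notrlzin\tilde f}=\emptyset$, so $r'\star s\centerdot\rho\in\Perp$ for all $s,\rho$. Hence the bare application $(r')I$ lies in $\falsitydual{\bot}$, and therefore in every truth value. So $\lambda r\lambda r'.\,r\,(r'I)$ works uniformly:
\begin{itemize}
\item if $\falsity{\op(x,y)\notrlzin\tilde f}=\emptyset$, then $r$ accepts any argument;
\item otherwise $y=f(x)$, hence $y'\neq f(x)$ and $(r')I$ is orthogonal to every stack.
\end{itemize}
Since $\falsity{y=y'}$ is either $\emptyset$ or $\Pi$, you lose nothing by uncurrying the conjunction with $\lambda t\lambda c.\,c\,(\lambda r\lambda r'.\,r(r'I))$, and you can then pair the result with the totality realizer. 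This is your first ``sequential'' idea with a fixed closed term in the hole, but it had to be stated and checked.

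\textbf{The paper's route.} The argument the paper relies on avoids the symmetric case split differently. It realizes the contrapositive form $\forall x\rlzin a\,\forall y,y'(y\neq y'\imp\op(x,y)\rlzin\tilde f\imp\op(x,y')\notrlzin\tilde f)$. There, a stack in the falsity value ends in an element of $\falsity{\op(x,y')\notrlzin\tilde f}$, which forces $y'=f(x)$, so only $y$ has to be examined; the realizer is $\lambda xyz.zy$.
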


%The proof is straightforward, we omit it. The term $\theta_1$ corresponds to $\lambda xyz.zy$ and it will be employed in the proof of \cref{thm:preserving cardinals}.

\begin{comment}
    \begin{proof}
    We show that the formula $\forall x\rlzin a \forall y,y'(y\neq y'\rightarrow\op(x,y)\rlzin\tilde{f}\rightarrow\op(x,y')\notrlzin\tilde{f})$ is realized. A stack in the falsity value of the formula has the form $t\centerdot u\centerdot v\centerdot\pi$ where, for some names $c, d, d'$, $t\in\falsitydual{c\rlzin a}, u\in\falsitydual{d\neq d'}, v\in\falsitydual{\op(c,d\rlzin \tilde{f}}, \pi\in\falsity{\op(c,d')\notrlzin\tilde{f}}$. By $\falsity{\op(c,d')\notrlzin\tilde{f}}\neq\emptyset$ and the definition of $\tilde{f}$, we can infer that $c\in\dom{a}$ and $d'=f(c)$. We can then distinguish two cases: if $d=f(c)$, then $u\in\falsitydual{\bot}$; otherwise, $v\in\falsitydual{\top\rightarrow\bot}$. In both cases $(v)u\in\falsitydual{\bot}$, thus $\lambda xyz.zy$ is a realizer for the formula. 
\end{proof}
\end{comment}

In the following, \emph{par abus de langage}, given a function $f$ from an ordinal $\gamma$ to $M^\mathcal{K}$, we denote $\tilde{f}$ the lift of the function $f_0:\dom{\hat{\gamma}}\to M^\mathcal{K}$ defined as $f_0(\hat{\xi})=f(\xi)$ for $\xi<\gamma$. In particular, in \cref{sec: ACkappa}, the lift will be used to represent functions from $\hat{\gamma}$ to some greater ordinal $\hat{\mu}$. 

\begin{proposition}\label{prop:domain of a function}
    If $f:\gamma\to M^{\mathcal{K}}$, $\gamma<\kappa$, then $\Vdash\forall x^{\hat{\gamma}}\exists y(\op(x,y)\rlzin\tilde{f})$ and $\Vdash \forall x(\exists y(\op(x,y)\rlzin \tilde{f})\rightarrow x\rlzin\hat{\gamma}$. Therefore, the lift of $f$ has domain $\hat{\gamma}$.
\end{proposition}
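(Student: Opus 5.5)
The plan is to prove the two realizability statements directly by exhibiting explicit proof-like terms and unfolding falsity values. The only input needed is the shape of $\tilde{f}$: its elements are exactly the pairs $\langle\op(\hat{\xi},f(\xi)),\pi\rangle$ with $\xi<\gamma$ and $\pi\in\Pi$, since $\dom{\hat{\gamma}}=\{\hat{\xi}\mid\xi<\gamma\}$.

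\textbf{First statement.} The formula $\forall x^{\hat{\gamma}}\exists y(\op(x,y)\rlzin\tilde{f})$ unfolds as $\forall x^{\hat{\gamma}}(\forall y(\op(x,y)\notrlzin\tilde{f})\imp\bot)$. A stack in its falsity value has the form $\underline{\xi}\centerdot u\centerdot\pi$, where $\xi<\gamma$, $u\Vdash\forall y(\op(\hat{\xi},y)\notrlzin\tilde{f})$ and $\pi\in\Pi$. The natural witness is $y=f(\xi)$. By definition of $\tilde{f}$ we have $\falsity{\op(\hat{\xi},f(\xi))\notrlzin\tilde{f}}=\Pi$, so $u\star\pi\in\Perp$ for every $\pi$. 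The term $\lambda n.\lambda u.u$ therefore works: it reduces $\underline{\xi}\centerdot u\centerdot\pi$ to $u\star\pi$, and this is in the pole by anti-reduction.

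\textbf{Second statement.} I realize the contrapositive form $\forall x\forall y(\op(x,y)\rlzin\tilde{f}\imp x\rlzin\hat{\gamma})$, which is intuitionistically equivalent and can be converted back by a fixed $\lambda$-term. Its falsity value consists of stacks $v\centerdot\pi$ with $v\Vdash\op(c,d)\rlzin\tilde{f}$, i.e.\ $v\Vdash\op(c,d)\notrlzin\tilde{f}\imp\bot$, and with $\pi\in\falsity{c\notrlzin\hat{\gamma}}$.

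Nonemptiness of $\falsity{c\notrlzin\hat{\gamma}}$ forces $c=\hat{\xi}$ for some $\xi<\gamma$, and then $\pi=\underline{\xi}\centerdot\pi'$. I split on whether $d=f(\xi)$ in the ground model.
\begin{itemize}
\item If $d=f(\xi)$, then $\falsity{\op(c,d)\notrlzin\tilde{f}}=\Pi$, so any term is a realizer of $\op(c,d)\notrlzin\tilde{f}$. Hence $v\star t\centerdot\rho\in\Perp$ for every $t$ and $\rho$.
\item If $d\neq f(\xi)$, then $\falsity{\op(c,d)\notrlzin\tilde{f}}=\emptyset$, provided $\op$ is injective on names in the ground model. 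Then every term realizes it, so again $v\star t\centerdot\rho\in\Perp$ for all $t,\rho$.
\end{itemize}
In both cases the term $\lambda v.\lambda k.(v)(k_0)$, or more simply $\lambda v.\lambda w.v\,w$ with an arbitrary closed second argument, reduces to $v\star t\centerdot\pi'$ and so lands in the pole.

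The extensional reading "$\tilde{f}$ has domain $\hat{\gamma}$" then follows by combining both statements with the passage from $\rlzin$ to $\in$. That passage uses $\forall z\rlzin a\,(z\in a)$, which is realized via $W\Vdash\forall x(x\simeq x)$.

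The main obstacle is the ground-model injectivity of $\op$ on names: I need $\op(\hat{\xi},d)=\op(\hat{\xi}',d')$ to imply $\xi=\xi'$ and $d=d'$, which is what pins down $c$ and $d$ from membership in $\tilde{f}$. I would check this against Krivine's definition of $\op$ in the cited reference. If injectivity fails, I would instead realize the second statement extensionally, via $\op(x,y)\simeq\op(\hat{\xi},f(\xi))$ and the realized property of $\op$, and then use Lemma \ref{lem: TODs are good} to recover $x=\hat{\xi}$.
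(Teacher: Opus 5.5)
Your proposal rests on a misreading of the lift. By definition $\tilde f=\{\langle\op(x,f_0(x)),\pi\rangle\mid\langle x,\pi\rangle\in\hat\gamma\}$, so $\tilde f$ inherits the stacks that $\hat\gamma$ attaches to $\hat\xi$. Its elements are $\langle\op(\hat\xi,f(\xi)),\underline{\xi}\centerdot\pi\rangle$, not $\langle\op(\hat\xi,f(\xi)),\pi\rangle$ with $\pi$ arbitrary. Hence
\[\falsity{\op(\hat\xi,f(\xi))\notrlzin\tilde f}=\{\underline\xi\centerdot\varsigma\mid\varsigma\in\Pi\}=\falsity{\hat\xi\notrlzin\hat\gamma},\]
not $\Pi$. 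This coincidence of falsity values is exactly what drives the paper's proof.

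With the correct value, your term $\lambda n.\lambda u.u$ for the first statement fails. It discards $\underline\xi$ and runs $u\star\pi$, whereas $u$ is only guaranteed to be orthogonal to stacks of the form $\underline\xi\centerdot\varsigma$. (In the algebra of Section~\ref{subsec: exemple}, $\lambda z.\mathsf h\in\falsitydual{\forall y(\op(\hat\xi,y)\notrlzin\tilde f)}$. Yet $\lambda n.\lambda u.u\star\underline\xi\centerdot(\lambda z.\mathsf h)\centerdot\sigma$ gets stuck at $\lambda z.\mathsf h\star\sigma\notin\Perp$ for a stack bottom $\sigma$.) The realizer must hand the ordinal term back: $\lambda xy.yx\star\underline\xi\centerdot u\centerdot\pi\dsucc u\star\underline\xi\centerdot\pi\in\Perp$. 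This is the paper's term.

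Your argument for the second statement contains two further errors.
\begin{enumerate}
\item Since $c\rlzin\hat\gamma$ abbreviates $(c\notrlzin\hat\gamma)\imp\bot$, a stack in $\falsity{\op(c,d)\rlzin\tilde f\imp c\rlzin\hat\gamma}$ has the form $v\centerdot w\centerdot\rho$ with $w\in\falsitydual{c\notrlzin\hat\gamma}$. It is not $v\centerdot\pi$ with $\pi\in\falsity{c\notrlzin\hat\gamma}$, and in particular nothing forces $c$ to be a hat ordinal.
\item In your case $d=f(\xi)$, you infer from $\falsity{\op(c,d)\notrlzin\tilde f}=\Pi$ that every term lies in the corresponding truth value. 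It is the reverse: a full falsity value has truth value $\falsitydual{\bot}$, and only an empty falsity value makes every term belong to the truth value. So your claim $v\star\underline\xi\centerdot\pi'\in\Perp$ is unjustified.
\end{enumerate}

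With the lift read correctly, the argument is short. Suppose $\op(c,d)$ is in the domain of $\tilde f$. Then $c=\hat\xi$ and $d=f(\xi)$, by injectivity of $\op$; the paper also uses this tacitly, so your worry there is legitimate but is not the real issue. Hence $\falsity{\op(c,d)\notrlzin\tilde f}=\falsity{c\notrlzin\hat\gamma}$, and $w$ itself lies in $\falsitydual{\op(c,d)\notrlzin\tilde f}$. Otherwise that falsity value is empty. Either way $v\star w\centerdot\rho\in\Perp$, so $\lambda v.\lambda w.vw$ does realize your reformulation, but for this reason rather than the one you give.

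The paper states the same fact as: the identity realizes $\forall x(x\notrlzin\hat\gamma\imp\forall y(\op(x,y)\notrlzin\tilde f))$, because the two falsity values coincide. Your closing passage from $\rlzin$ to $\in$ is unnecessary.
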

\begin{proof}
    The formula $\forall x^{\hat{\gamma}}\exists y(\op(x,y)\rlzin\tilde{f})$ can be rewritten as $\forall x^{\hat{\gamma}}(\forall y(\op(x,y)\notrlzin\tilde{f})\rightarrow\bot)$. Consider a stack $\pi\in\falsity{\forall x^{\hat{\gamma}}(\forall y(\op(x,y)\notrlzin\tilde{f})\rightarrow\bot)}$, which has the form $\underline{\alpha}\centerdot t\centerdot\pi'$ for some $\alpha<\gamma$, $t\in\falsitydual{\forall y(\op(\hat{\alpha},y)\notrlzin\tilde{f})}$, and $\pi'\in\Pi$. Notice that $t$ is contained by hypothesis in $\falsitydual{\op(\hat{\alpha},f(\alpha))\notrlzin\tilde{f}}$, and by definition of $\tilde{f}$, $\falsity{\op(\hat{\alpha},f(\alpha))\notrlzin\tilde{f}}=\{\underline{\alpha}\centerdot\varsigma\mid\varsigma\in\Pi\}$. Therefore $t\star\underline{\alpha}\centerdot \pi'\in\Perp$ (as $\underline{\alpha}\centerdot \pi'\in\falsity{\op(\hat{\alpha},f(\alpha))\notrlzin\tilde{f}}$) and so is the process $\lambda xy.yx\star\underline{\alpha}\centerdot t\centerdot\pi'$. Hence, $\lambda xy.yx$ realizes the first formula.

    For the second part of the statement, the formula $\forall x(\exists y(\op(x,y)\rlzin \tilde{f})\rightarrow x\rlzin\hat{\gamma})$ equivalent to $\forall x(x\notrlzin\hat{\gamma}\rightarrow \forall y(\op(x,y)\notrlzin \tilde{f})$. Consider a stack $\rho$ in the falsity value of the formula. Then, $\rho$ has the form $u\centerdot\rho'$ for $u\in \falsitydual{c\notrlzin\hat{\gamma}}$, $\rho'\in\falsity{\forall y(\op(c,y)\notrlzin \tilde{f})}$, for some $c\in M^{\mathcal{K}}$. There are two cases, depending on $c$.
    \begin{itemize}
        \item If $c=\hat{\alpha}$ for some $\alpha<\gamma$, then $\falsity{c\notrlzin\hat{\gamma}}=\{\underline{\alpha}\centerdot\varsigma\mid\varsigma\in\Pi\}$ and there exists (a unique) $d\in M^\mathcal{K}$ such that $f(\alpha)=d$. This latter observation implies that $\falsity{\forall y(\op(c,y)\notrlzin \tilde{f})}=\falsity{\op(c,d)\notrlzin \tilde{f})}=\falsity{c\notrlzin\hat{\gamma}}$, the last equation holding by definition of $\tilde{f}$, thereby $u\star\rho'\in\Perp$.
        \item If $c$ is not of the form discussed in the previous point, then for all $d\in M^\mathcal{K}$, $\falsity{\op(c,d)\notrlzin\tilde{f}}$ is empty, and so is $\falsity{\forall y(\op(c,y)\notrlzin \tilde{f})}$. Thus, $\falsity{c\notrlzin\hat{\gamma}\rightarrow\forall y(\op(c,y)\notrlzin \tilde{f})}=\emptyset$.
    \end{itemize}
    In the first case $I\star u\centerdot\rho'\in\Perp$, and also in the second the identity is trivially a realizer. Hence, the identity realizes the formula. 
\end{proof}
%%%

\section{Fully closed realizability algebras}\label{sec:closure}

When constructing a $\mathrm{ZF}$-model with forcing, it may happen that new bijections from a cardinal to a lower ordinal are added in the resulting forcing extension, thus a cardinal may \emph{collapse} to a smaller ordinal and no longer remain a cardinal in the forcing extension. The same may occur when we build classical realizability models for set theory: there is no guarantee that the representative $\hat{\kappa}$ remains a cardinal even if $\kappa$ is a cardinal in the ground model. This phenomenon affects our construction if we want to realize uncountable versions of AC, since relevant cardinals may collapse to countable ordinals. In the context of forcing, some requirements may be used to ensure that cardinals are preserved. One of those properties is $\kappa$-closure, that guarantees the preservation of cardinals below $\kappa$. In this section, we present an analogue of the $\kappa$-closure for classical realizabilty algebras.

In forcing, a poset $(\mathbb{P}, \leq)$ has the $\kappa$-closure property when for every decreasing sequence $(p_\alpha)_{\alpha<\gamma}$ of elements of $\mathbb{P}$ with $\gamma<\kappa,$ there is $p\in\mathbb{P}$ such that $p\leq p_\alpha$ for any $\alpha<\gamma$. 

In order to implement this in our formalism, we need to introduce some definitions to encode (possibly transfinite) sequences of terms. 

%assume the existence of representatives for ordinals below $\kappa$ in $\Lambda_\mathsf{c}$. 

%In \cref{subsec: names}, we discussed full ordinals and hat ordinals as a way of representing ordinals in realizability models. However, when we consider an infinite cardinal $\kappa$ in the ground model, it may happen that $\fullname{\kappa}$ (resp. $\hat{\kappa}$) is no longer a cardinal in the realizabilty model, or that while $\kappa$ was uncountable in the ground model, $\fullname{\kappa}$ (resp. $\hat{\kappa}$) become countable in the realizability model. In this case, we say that the cardinal is \emph{collapsed}. In forcing, several properties have been studied that prevent cardinals from collapsing, the most relevant being the $\kappa$-chain condition and the $\kappa$-closure. The former ensures preservation of cardinals greater than or equal to $\kappa$, while the latter ensures preservation of cardinals up to $\kappa$. In \cite{FontanellaG20} and \cite{FontanellaM25} some properties analogous to the $\kappa$-chain condition are developed for realizability models. In this section, we discuss an analogue of the $\kappa$-closure for realizability algebras. 

We define the notion of accumulator for any sequence of terms.

\begin{definition}\label{def:accumulator}
    Given a sequence $(u_\xi)_{\xi<\gamma}$ for some ordinal $\gamma$, a term $t\in\Lambda_\mathsf{c}$ \emph{accumulates the sequence}  $(u_\xi)_{\xi<\gamma}$ if for any $\xi<\gamma$, $(t)\underline{\xi}\dsucc t_\xi$. 
\end{definition}

Accumulators are used to represent sequences of terms within $\Lambda_\mathsf{c}.$ For $\kappa=\aleph_0$, they are the programs stocking a finite sequence and outputting the $i^{th}$ element on input $\underline{i}$.

The first part of $\kappa$-\closure will consists in requiring the existence of accumulators for sequences of length less that $\kappa$. In addition to that, the property will impose that these accumulators will not be in the truth value of $\bot$ unless the sequence they represent already contained `incompatible' terms.

%\todo{Useless definition?}
%\begin{definition}
%    Given $u\in\Lambda_\mathsf{c}$ and $v\in\Lambda_\mathsf{c},$ we say that $v$ is \emph{compatible} with $u$ if $v=u$ or if $uv\notin \falsitydual{\bot}.$   
 %   We say that $v$ is \emph{incompatible} with $u$ if it is not compatible.
%\end{definition}

%\begin{remark}  
%    Note that compatibility is not symmetric: we may have $uv\notin \falsitydual{\bot},$ but $vu\in\falsitydual{\bot}.$ Moreover, since in \cref{sec: classical realizability} we imposed as an additional condition on the pole that if $(t)u\star \pi\in \Perp$ then $t\star u\centerdot \pi\in \Perp,$ we shall note that $v$ is \emph{compatible} with $u$ if and only if $u=v$ or there exists $\pi\in\Pi$ such that $u\star v\centerdot\pi\not\in\Perp.$ 
%\end{remark}

Using the definitions above, we define the notion of $\kappa$-\closure for realizability algebras.

\begin{definition}\label{def: kappa-closure} 
Let $\mathcal{K}=\langle\Lambda_\mathsf{c},\Pi,\succ,\Perp\rangle$ be an uncountable realizability algebra and $\kappa$ a regular cardinal in the ground model $\mathcal{M}$ such that $\omega<\kappa\leq |\mathcal{K}|,$ we say that $\mathcal{K}$ is \emph{$\kappa$-\closed} if for every limit ordinal $\gamma<\kappa$, and for every sequence $(u_\xi)_{\xi<\gamma}\subseteq \Lambda_\mathsf{c}^{\textrm{closed}}$ there is a term $u\in\Lambda_\mathsf{c}^{\textrm{closed}}$ such  that
    the following properties are satisfied: 
    \begin{enumerate}
        \item $u$ accumulates $(u_\xi)_{\xi<\gamma}$;
        \item for every term $v\in\Lambda_\mathsf{c}$ such that $\FV{v}\subseteq\{x\}$, 
        if $v\{x:=u\}\in\falsitydual{\bot}$, then there exists $\delta<\gamma$ such that for every accumulator $w$ of $(u_\xi)_{\xi<\delta},$ we must have $v\{x:=w\}\in\falsitydual{\bot}$.
        %\item given a term $v$, if for any $\delta<\gamma$ there exists an accumulator $w$ of the sequence $(u_\xi)_{\xi<\delta}$ such that $v\{x:=w\}\not\Vdash\bot$, then for any accumulator $t$ of the sequence $(u_\xi)_{\xi<\gamma}$, $v\{x:=t\}\not\Vdash\bot$.
    \end{enumerate}

We will call $u$ a \emph{minimal accumulator of} $(u_\xi)_{\xi<\gamma}.$ 
    
\end{definition}

\begin{remark}\label{rmk:kclosure}
    We will often use the contrapositive of the second item which corresponds to the following: given a term $v$, if for any $\delta<\gamma$ there exists an accumulator $w$ of the sequence $(u_\xi)_{\xi<\delta}$ such that $v\{x:=w\}\notin \falsitydual{\bot}$, then we must have $v\{x:=u\}\notin\falsitydual{\bot}$.
\end{remark}

%Note that item 3. in Definition \ref{def:numerals} implies that for every limit ordinal $\delta,$ we have $\underline{\delta}$ is a minimal accumulator of the sequence $(\underline{\xi+1})_{\xi<\delta}.$

\subsection{Closure in boolean algebras}\label{sec: boolean closure}

This part is devoted to showing the connection between the $\kappa$-\closure as stated in \cref{sec:closure} and the $\kappa$-closure property in  forcing. 

Given a complete boolean algebra $\mathbb{B},$ we denote by $\mathbb{B}^+$ the set $\mathbb{B}\setminus\{0\}$ and $\leq$ the usual order relation induced by $\land$, i. e. for any $a, b\in B$, $a\leq b$ if and only if $a\land b= a$.

\begin{definition}
We recall that a \emph{dense} subset $D$ of a complete boolean algebra $\mathbb{B}$ is a subset $D\subseteq \mathbb{B}^+$ such that for every $b\in \mathbb{B},$ there is $d\in D$ such that $d\leq b$.  
\end{definition}

\begin{definition} \cite[p.68]{Bell2005} Given a complete boolean algebra $\mathbb{B}$ and a cardinal $\kappa,$ a dense subset $D\subseteq \mathbb{B}^+$ is \emph{$\kappa$-closed} if for every $\gamma<\kappa$ and every $\leq$-decreasing sequence $(d_\xi)_{\xi<\gamma}$ of $D,$ there is $d\in D$ such that for every $\xi<\gamma,$ $d\leq d_\xi.$
\end{definition}

The $\kappa$-closure ensures preservation of cardinals up to $\kappa.$

\begin{proposition} \cite[p.68]{Bell2005} If $\mathbb{B}$ has a dense subset which is $\kappa$-closed, then for every cardinal $\mu\leq \kappa,$ $\mu$ remains a cardinal in the corresponding boolean valued model $V^{\mathbb{B}}.$
\end{proposition}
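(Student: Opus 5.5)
We prove the standard forcing fact: if $\mathbb{B}$ has a $\kappa$-closed dense subset $D$, then no cardinal $\mu\leq\kappa$ is collapsed in $V^{\mathbb{B}}$. The plan is to show first that forcing with $\mathbb{B}$ adds no new functions from an ordinal $\gamma<\kappa$ into the ground model, and then to deduce preservation of cardinals. We work with the separative poset $D$ (ordered by $\leq$), which is dense in $\mathbb{B}^+$, so forcing with $D$ and with $\mathbb{B}$ give the same extensions and the same Boolean truth values.

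\textbf{No new short sequences.} Let $\gamma<\kappa$, let $\dot f$ be a $\mathbb{B}$-name, let $X\in V$, and let $b\in\mathbb{B}^+$ with $b\Vdash \dot f:\check\gamma\to\check X$. We show that the set of $d\in D$ deciding $\dot f$ completely is dense below $b$, i.e.\ every such $d$ satisfies $d\Vdash\dot f=\check g$ for some $g\in V$. Fix $d_0\in D$ with $d_0\leq b$. By transfinite recursion on $\xi\leq\gamma$ (using AC in the ground model to choose), build a $\leq$-decreasing sequence $(d_\xi)$ in $D$ together with values $x_\xi\in X$:
\begin{itemize}
\item at a successor step, given $d_\xi$, use density of $D$ to pick $d_{\xi+1}\leq d_\xi$ in $D$ and some $x_\xi$ with $d_{\xi+1}\Vdash \dot f(\check\xi)=\check x_\xi$; this exists because $d_\xi\Vdash \exists y\in\check X\,(\dot f(\check\xi)=y)$, so some $x\in X$ has $d_\xi\wedge\|\dot f(\check\xi)=\check x\|\neq 0$;
\item at a limit $\lambda\leq\gamma<\kappa$, apply $\kappa$-closedness of $D$ to $(d_\xi)_{\xi<\lambda}$ to obtain a lower bound $d_\lambda\in D$.
\end{itemize}
Then $d_\gamma\leq d_0\leq b$ forces $\dot f=\check g$, where $g=(x_\xi)_{\xi<\gamma}\in V$. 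The only real care needed is that $\gamma$ is a limit of length strictly below $\kappa$ at every limit stage, which holds since the stages are $\leq\gamma<\kappa$.

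\textbf{Preservation of cardinals.} Suppose toward a contradiction that some cardinal $\mu\leq\kappa$ of $V$ is not a cardinal in $V^{\mathbb{B}}$. Then some $b\in\mathbb{B}^+$ forces that there exist an ordinal $\gamma<\check\mu$ and a surjection $\dot f:\gamma\to\check\mu$. Strengthening $b$, we may assume it decides $\gamma$. Since $\gamma<\mu\leq\kappa$, the previous step gives $d\leq b$ with $d\Vdash\dot f=\check g$ for some $g\in V$. Surjectivity is absolute between $V$ and $V^{\mathbb{B}}$ for ground-model functions with ground-model domain and range, so $g:\gamma\to\mu$ is a surjection in $V$. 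This contradicts $\mu$ being a cardinal in $V$.

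The case $\mu=\kappa$ itself is covered, since only $\gamma<\kappa$ is used. Finite cardinals and $\omega$ are absolute anyway. I expect no genuine obstacle; the one delicate point is ensuring that the recursion uses closure only for sequences of length $<\kappa$ and that elements of $D$ stay nonzero, which is built into $D\subseteq\mathbb{B}^+$.
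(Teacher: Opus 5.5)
Your proof is correct and complete. The descending recursion of length $\gamma+1$ uses density at successor steps to decide $\dot f(\check\xi)$ and closure of $D$ at limit steps $\lambda\leq\gamma<\kappa$. It yields a condition forcing $\dot f=\check g$ with $g\in V$, so a surjection from $\gamma<\mu\leq\kappa$ onto $\check\mu$ would be a ground-model surjection, which is impossible.

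The paper gives no proof of its own here, since the proposition is quoted from Bell, and your argument is the standard one. Boolean-valued presentations usually split it into two facts: ``a $\kappa$-closed dense subset makes $\mathbb{B}$ $(\lambda,\nu)$-distributive for all $\lambda<\kappa$'' and ``distributivity means no new functions from $\lambda$ into the ground model''. Your recursion simply merges the two.

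You never use regularity of $\kappa$, and rightly so: only closure under descending sequences of length $<\kappa$ is needed.
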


\begin{lemma}\label{lem: equivalence closure in BA}
The following are equivalent: 
\begin{enumerate}
\item $\mathbb{B}^+$ is $\kappa$-closed  
\item for every limit ordinal $\gamma<\kappa$, if for every $\delta<\gamma,$ we have $\bigwedge_{\xi<\delta} b_\xi\neq\mathbf{0}$, then $\bigwedge_{\xi<\gamma} b_\xi\neq\mathbf{0}$. 
\end{enumerate}
\end{lemma}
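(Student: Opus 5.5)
The plan is to prove the two implications directly, working throughout with the sequence $(b_\xi)_{\xi<\gamma}$ of elements of $\mathbb{B}$ that item (2) implicitly quantifies over. The key device is the partial meets $c_\delta:=\bigwedge_{\xi<\delta} b_\xi$ for $\delta\leq\gamma$. This sequence is $\leq$-decreasing in $\delta$, it is continuous at limits ($c_\gamma=\bigwedge_{\delta<\gamma}c_\delta$ when $\gamma$ is a limit), and $c_{\delta+1}=c_\delta\wedge b_\delta$. Completeness of $\mathbb{B}$ guarantees that all these meets exist. In forcing terms, item (2) says that a ``fusion'' of conditions which are compatible on every initial segment is again nonzero.

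\textbf{(1)$\Rightarrow$(2).} Fix a limit $\gamma<\kappa$ and assume $c_\delta\neq\mathbf{0}$ for every $\delta<\gamma$. Then $(c_\delta)_{\delta<\gamma}$ is a $\leq$-decreasing sequence in $\mathbb{B}^+$ of length $\gamma<\kappa$. By $\kappa$-closure of $\mathbb{B}^+$ there is $d\in\mathbb{B}^+$ with $d\leq c_\delta$ for all $\delta<\gamma$. Since each $b_\xi$ satisfies $c_{\xi+1}\leq b_\xi$ and $\xi+1<\gamma$ ($\gamma$ is a limit), we get $d\leq b_\xi$ for all $\xi<\gamma$. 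Hence $\mathbf{0}\neq d\leq\bigwedge_{\xi<\gamma}b_\xi$.

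\textbf{(2)$\Rightarrow$(1).} Let $(d_\xi)_{\xi<\gamma}$ be $\leq$-decreasing in $\mathbb{B}^+$ with $\gamma<\kappa$; we must find a nonzero lower bound, and the natural candidate is $\bigwedge_{\xi<\gamma}d_\xi$. I would argue by induction on $\gamma$ that this meet is nonzero.
\begin{itemize}
\item If $\gamma=0$, the meet is $\mathbf{1}$.
\item If $\gamma=\beta+1$ is a successor, decreasingness gives $\bigwedge_{\xi<\gamma}d_\xi=d_\beta\neq\mathbf{0}$.
\item If $\gamma$ is a limit, then for every $\delta<\gamma$ the meet $\bigwedge_{\xi<\delta}d_\xi$ is nonzero by the inductive hypothesis (or directly: it equals $d_{\delta'}$ if $\delta=\delta'+1$). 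Applying (2) with $b_\xi:=d_\xi$ yields $\bigwedge_{\xi<\gamma}d_\xi\neq\mathbf{0}$.
\end{itemize}
In each case the meet is a nonzero element below every $d_\xi$, which is what is required.

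There is no real obstacle here; the only care needed is bookkeeping. First, the paper's definition of $\kappa$-closed allows arbitrary $\gamma<\kappa$, while (2) only speaks of limit $\gamma$, so the successor and zero cases must be handled separately, as above. Second, in (1)$\Rightarrow$(2) we use that $\gamma$ is a limit so that $\xi+1<\gamma$, which converts the partial meets back into bounds on the individual $b_\xi$.
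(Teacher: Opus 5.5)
Your proof is correct and follows essentially the same route as the paper. For (1)$\Rightarrow$(2) you both use partial meets plus $\kappa$-closure, and for (2)$\Rightarrow$(1) you both split into successor and limit cases and apply (2) to the given sequence in the limit case. The induction in your limit case is harmless but unnecessary: decreasingness already gives $\bigwedge_{\xi<\delta}d_\xi\geq d_\delta\neq\mathbf{0}$ for every $\delta<\gamma$, which is how the paper argues.
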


\begin{proof}
(1) implies $(2):$ for every $\delta<\gamma,$ let $d_\delta:= \bigwedge_{\xi<\delta} b_\xi$. The sequence $(d_\delta)_{\delta<\gamma}$ is  decreasing and by hypothesis is contained in $\mathbb{B}^+$, therefore it has a lower bound $d$ in $\mathbb{B}^+$ by $\kappa$-closure. We have
$d\leq\bigwedge_{\delta<\gamma}d_\delta = \bigwedge_{\xi<\gamma} b_\xi$ and $d\neq {\bf0}$ hence $\bigwedge_{\xi<\gamma} b_\xi\neq {\bf 0}.$

(2) implies $(1):$ let $(d_\delta)_{\delta<\gamma}$ be a decreasing sequence in $\mathbb{B}^+.$ Then for every $\delta<\gamma,$ we have  
$\bigwedge_{\xi< \delta} d_\xi\geq d_\delta\in \mathbb{B}^+$. If $\gamma$ is a successor ordinal $\alpha+1,$ then $d_\alpha$ is a lower bound of the sequence contained in $\mathbb{B}^+$. If $\gamma$ is a limit ordinal, we can apply $(2)$, hence $d\dfn \bigwedge_{\xi<\gamma} d_\xi$ is a lower bound in $\mathbb{B}^+$.
\end{proof}

Any complete boolean algebra $\mathbb{B} = \langle B, \mathbf{0}, \mathbf{1}, \land, \vee, \neg\rangle$, naturally induces a realizability algebra $\mathcal{K}(\mathbb{B})=\langle\Lambda_\mathsf{c} , \Pi, \succ_{\mathbb{B}}, \Perp_{\mathbb{B}}\rangle$ as follows (see also \cite{FontanellaG20}). We let $\Pi_0 \dfn B$ so that 
the set of atomic stacks is $B,$ and define a \emph{boolean-value} function $(\cdot)^{\mathbb{B}}:\Lambda_\mathsf{c}\cup\Pi\to\mathbb{B}$ by induction on the structure of the elements of $\Lambda_\mathsf{c}\cup\Pi$. For any variable $x$, any terms $t, u\in\Lambda_\mathsf{c}$, any atomic stack $\pi_0\in\Pi_0$ and any stack $\pi\in\Pi$, we set 
\[\pi_0^{\mathbb{B}} \dfn \pi_0,\quad (t\centerdot\pi)^{\mathbb{B}} \dfn t^{\mathbb{B}}\land\pi^{\mathbb{B}};\]
    \[x^{\mathbb{B}} \dfn \cc^{\mathbb{B}} \dfn \mathbf{1},\quad k_\pi^{\mathbb{B}} \dfn \pi^{\mathbb{B}},\quad
    (\lambda x.t)\dfn t^{\mathbb{B}},\quad ((t)u)^{\mathbb{B}} \dfn t^{\mathbb{B}}\land u^{\mathbb{B}};
\]
    
%\todo{We interpret any additional instruction of interest $w\in\Lambda_\mathsf{c}$ (in particular the terms $\underline{\alpha}$ and $\chi$ as defined in \cref{sec:closure}) as the top element of $\mathbb{B}$, i. e. $w^\mathbb{B}=\mathbf{1}$. }

The function $(\cdot)^{\mathbb{B}}$ can be further extended to the set of processes $\Lambda_\mathsf{c}\star\Pi$ by defining $(t\star\pi)^{\mathbb{B}} \dfn t^{\mathbb{B}}\land\pi{^\mathbb{B}}$.
Then, the relation $\succ_{\mathbb{B}}$ is defined by 
    \[t\star\pi\succ_{\mathbb{B}} u\star\rho\;\,\Leftrightarrow\,\;(t\star\pi)^{\mathbb{B}}\leq (u\star\rho)^{\mathbb{B}}\]
for $t\star\pi, u\star\rho\in\Lambda_\mathsf{c}\star\Pi.$ It is easy to check that this evaluation satisfies the rules \emph{push}, \emph{grab}, \emph{save} and \emph{restore}. To conclude the construction, we define $\Perp_{\mathbb{B}}=\{t\star\pi\in\Lambda_\mathsf{c}\star\Pi\mid (t\star\pi)^{\mathbb{B}} = \mathbf{0}\}$. In particular, this implies that $t\in\falsitydual{\bot}$ if and only if $t^\mathbb{B}=\mathbf{0}$.

Since we employ explicit substitutions to state \closure for realizability algebras, thus we need to define the boolean value associated to a term in the form $v\{x:=t\}$. The definition of $v\{x:=t\}^{\mathbb{B}}$ comes from the observation that for any variable $x$, its associated value is $\mathbf{1}$, together with the fact that an easy induction proves the equality $t^{\mathbb{B}}=\bigwedge\{u^{\mathbb{B}}\mid u\in\subterm{t}\}$. Given $t, v$ terms, the value of $v\{x:=t\}^{\mathbb{B}}$ is defined as
    \[v\{x:=t\}^{\mathbb{B}}\dfn
    \begin{cases}
        v^{\mathbb{B}}\land t^{\mathbb{B}}& x\in\subterm{v};\\
        v^{\mathbb{B}}& x\not\in\subterm{v}.
    \end{cases}
    \]

We show that $\mathbb{B}^+$ is $\kappa$-closed for a regular uncountable cardinal $\kappa$ if and only if $\mathcal{K}(\mathbb{B})$ is $\kappa$-\closed, but we need to specify who are the ordinal terms in $\mathcal{K}(\mathbb{B}).$ Any sequence of ordinals terms that are interpreted as $\mathbf{1}$ would work, for instance we may
define every $\underline{\alpha}$ as the identity $I;$ then every $\alpha^\mathbb{B}$ is $\mathbf{1}.$

%\begin{enumerate}
%\item when $\alpha$ is a limit ordinal (including $0$) $\underline{\alpha}$ is $I$ 
%\item $\underline{\alpha+1}\dfn(\underline{s}) \underline{\alpha}$ 
%where $\underline{s}=I$
%\end{enumerate}

\begin{theorem}\label{thm: equivalence of closure} Suppose that $\kappa$ is a regular uncountable cardinal, then  
    $\mathbb{B}^+$ is $\kappa$-closed if and only if $\mathcal{K}(\mathbb{B})$ is $\kappa$-\closed.
\end{theorem}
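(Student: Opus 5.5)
We would prove both directions through the characterization of Lemma \ref{lem: equivalence closure in BA}: $\mathbb{B}^+$ is $\kappa$-closed if and only if, for every limit $\gamma<\kappa$ and every sequence $(b_\xi)_{\xi<\gamma}$, whenever all partial meets $\bigwedge_{\xi<\delta}b_\xi$ ($\delta<\gamma$) are nonzero, the full meet $\bigwedge_{\xi<\gamma}b_\xi$ is nonzero. On the realizability side we use that $t\in\falsitydual{\bot}$ iff $t^{\mathbb{B}}=\mathbf{0}$, and that $\succ_{\mathbb{B}}$ is order-theoretic. Since all $\underline{\xi}$ are $I$, with value $\mathbf{1}$, the condition ``$w$ accumulates $(u_\xi)_{\xi<\delta}$'' means $(w\underline{\xi}\star\pi)^{\mathbb{B}}\le (u_\xi\star\pi)^{\mathbb{B}}$ for all $\pi$. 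Taking $\pi$ atomic equal to $\mathbf{1}$, this gives $w^{\mathbb{B}}\le u_\xi^{\mathbb{B}}$; conversely, it is implied by $w^{\mathbb{B}}\le u_\xi^{\mathbb{B}}$. So accumulators of $(u_\xi)_{\xi<\delta}$ are exactly the terms $w$ with $w^{\mathbb{B}}\le\bigwedge_{\xi<\delta}u_\xi^{\mathbb{B}}$. This translation is the first step.

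($\Rightarrow$) Given a limit $\gamma<\kappa$ and $(u_\xi)_{\xi<\gamma}$, set $b_\xi=u_\xi^{\mathbb{B}}$ and $b=\bigwedge_{\xi<\gamma}b_\xi$. Choose $u$ closed with $u^{\mathbb{B}}=b$; for instance $u=k_{b}$, taking the atomic stack $b\in\Pi_0$, so that $k_b^{\mathbb{B}}=b$. By the first step, $u$ accumulates the sequence. For item (2), take $v$ with $\FV{v}\subseteq\{x\}$ and $v\{x:=u\}^{\mathbb{B}}=\mathbf{0}$. If $x\notin\subterm{v}$, then $v^{\mathbb{B}}=\mathbf{0}$ and any $\delta$ works. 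Otherwise $v^{\mathbb{B}}\wedge b=\mathbf{0}$. Apply the contrapositive of Lemma \ref{lem: equivalence closure in BA}(2) to the sequence $(v^{\mathbb{B}}\wedge b_\xi)_{\xi<\gamma}$, whose full meet is $v^{\mathbb{B}}\wedge b=\mathbf{0}$. This yields $\delta<\gamma$ with $v^{\mathbb{B}}\wedge\bigwedge_{\xi<\delta}b_\xi=\mathbf{0}$. For $\delta=0$ the meet is $v^{\mathbb{B}}$, and any $w$ works. For every accumulator $w$ of $(u_\xi)_{\xi<\delta}$ we have $w^{\mathbb{B}}\le\bigwedge_{\xi<\delta}b_\xi$, hence $v\{x:=w\}^{\mathbb{B}}=v^{\mathbb{B}}\wedge w^{\mathbb{B}}=\mathbf{0}$.

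($\Leftarrow$) We verify Lemma \ref{lem: equivalence closure in BA}(2). Let $(b_\xi)_{\xi<\gamma}$ have all partial meets nonzero, and set $u_\xi=k_{b_\xi}$, so $u_\xi^{\mathbb{B}}=b_\xi$. Take a minimal accumulator $u$ and let $v=x$. For each $\delta<\gamma$, the term $w_\delta=k_{c_\delta}$ with $c_\delta=\bigwedge_{\xi<\delta}b_\xi$ is an accumulator of $(u_\xi)_{\xi<\delta}$ with $w_\delta^{\mathbb{B}}\neq\mathbf{0}$, so $v\{x:=w_\delta\}\notin\falsitydual{\bot}$. By Remark \ref{rmk:kclosure}, $u^{\mathbb{B}}\neq\mathbf{0}$. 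Since $u$ accumulates the whole sequence, $u^{\mathbb{B}}\le\bigwedge_{\xi<\gamma}b_\xi$, which is therefore nonzero.

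The main obstacle is the first step: justifying rigorously that accumulation is equivalent to $w^{\mathbb{B}}\le\bigwedge u_\xi^{\mathbb{B}}$. This needs care about how $t\succ u$ is quantified over all stacks, including the atomic stack $\mathbf{1}$, and about the value of $(w)\underline{\xi}$. A second, smaller point is checking that the ad hoc definition of $v\{x:=t\}^{\mathbb{B}}$ is used consistently when $v=x$.
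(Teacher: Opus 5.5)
Your proposal is correct and is essentially the paper's proof. For ($\Rightarrow$), $k_b$ with $b=\bigwedge_{\xi<\gamma}u_\xi^{\mathbb{B}}$ serves as the minimal accumulator. For ($\Leftarrow$), you test $v=x$ against continuations of the partial meets, and Lemma~\ref{lem: equivalence closure in BA} supplies the Boolean-algebra step in both directions. The paper phrases ($\Leftarrow$) contrapositively and uses the sequence $(k_{d_\xi})$ of partial-meet continuations rather than your $(k_{b_\xi})$, which is immaterial. Your explicit characterization of accumulators as exactly the terms $w$ with $w^{\mathbb{B}}\le\bigwedge_{\xi<\delta}u_\xi^{\mathbb{B}}$, obtained by evaluating at the atomic stack $\mathbf{1}$, makes rigorous a step the paper only uses tacitly.
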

\begin{proof}
    Suppose that $\mathbb{B}^+$ is $\kappa$-closed. Fix a limit ordinal $\gamma<\kappa$ and consider a sequence $(u_\xi)_{\xi<\gamma}$ in $\mathcal{K}(\mathbb{B})$. Set $\pi=\bigwedge_{\xi<\gamma} u_{\xi}^{\mathbb{B}}$ and define $t\dfn k_{\pi}$. By definition, $t^{\mathbb{B}}=\bigwedge_{\xi<\gamma} u_{\xi}^{\mathbb{B}}\leq u_\xi^{\mathbb{B}}$ for every $\xi<\gamma.$ It follows that $t$ accumulates the sequence: 
    indeed, for every stack $\rho$ and every ordinal $\xi<\gamma,$ we have $(t\underline{\xi}\star\rho)^\mathbb{B}= t^\mathbb{B}\land \mathbf{1}\land  \rho^\mathbb{B}=\left(\bigwedge_{\xi<\gamma} u_{\xi}^{\mathbb{B}}\right)\land \rho^\mathbb{B}\leq u_\xi^{\mathbb{B}}\land \rho^{\mathbb{B}}$, hence $t\underline{\xi}\star \rho\succ u_\xi\star \rho.$ For the second part of the requirement, 
    we fix a term $v\in \mathcal{K}(\mathbb{B}).$ 
     Suppose that for every $\delta<\gamma$ there is $w_\delta$ an accumulator of the sequence $(u_\xi)_{\xi<\delta}$ such that $v\{x:=w_\delta\}\notin \falsitydual{\bot}$, that is $v\{x:=w_\delta\}^{\mathbb{B}}\neq\mathbf{0}$. If $x\not\in\subterm{v}$, then for all $u$, $v\{x:=u\}^{\mathbb{B}}=v\{x:=w_\delta\}^{\mathbb{B}}=v^{\mathbb{B}}\neq\mathbf{0}$, and the result is straightforward. 
     Suppose then that $x\in\subterm{v}$. For all $\delta<\gamma$, $v\{x:=w_\delta\}^{\mathbb{B}}=v^{\mathbb{B}}\land w_\delta^\mathbb{B}\neq \mathbf{0}.$ Since $w_\delta$ is an accumulator, we have $w_\delta^\mathbb{B}\leq (\bigwedge_{\xi<\delta}u_\xi)^\mathbb{B}$, it follows that $v\land (\bigwedge_{\xi<\delta}u_\xi)^\mathbb{B} \neq \mathbf{0}$. For every $\delta<\gamma,$ let $d_\delta:= v^\mathbb{B}\land (\bigwedge_{\xi<\delta}u_\xi)^\mathbb{B}.$ Then $(d_\delta)_{\delta<\gamma}$ is a decreasing sequence of elements of $\mathbb{B}^+.$ Since $\mathbb{B}^+$ is $\kappa$-closed, we have $\bigwedge_{\delta<\gamma} d_\delta\neq 0.$
    %the sequence $(d_\delta)_{\delta<\gamma}$ has a lower bound $d\in \mathbb{B}^+.$ 
    We have $v^{\mathbb{B}}\land t^{\mathbb{B}}= v^\mathbb{B}\land (\bigwedge_{\xi<\gamma} u_\xi)^\mathbb{B} = \bigwedge_{\delta<\gamma} d_\delta\neq \mathbf{0},$ hence $v\{x:=t\}\notin\falsitydual{\bot}.$

    Suppose now that $\mathcal{K}(\mathbb{B})$ is $\kappa$-\closed, we show that $\mathbb{B}^+$ is $\kappa$-closed by contrapositive. Consider a sequence $(b_\xi)_{\xi<\gamma}\subseteq B$ for some limit $\gamma<\kappa.$ We show that if $\bigwedge_{\xi<\gamma} b_\xi=\mathbf{0},$ then there is $\delta<\gamma$ such that $\bigwedge_{\xi<\delta} b_\xi=\mathbf{0}.$
    For every $\delta\leq \gamma,$ set $d_\delta\dfn \bigwedge_{\xi<\delta} b_\xi$ and let $k_{\delta}\dfn k_{d_\delta}\in\Lambda_\mathsf{c}.$

    Note that every $k_\delta$ accumulates $(k_\xi)_{\xi<\delta}:$
    indeed for $\xi<\delta\leq \gamma,$ we have 
    $(k_\delta\underline{\xi}\star \centerdot\rho)^\mathbb{B} = d_\delta\land \rho^\mathbb{B} \leq d_\xi \land \rho^\mathbb{B}$ (since $d_\delta\leq d_\xi$) hence $ k_\delta\underline{\xi}\star\rho\succ_{\mathbb{B}} k_\xi\star\rho.$ 
Let $u$ be a minimal accumulator of $(k_\xi)_{\xi<\delta}$ given by the $\kappa$-\closure, then $u^{\mathbb{B}}\leq d_{\gamma}$ : indeed $u^{\mathbb{B}} = (u \underline{\delta})^{\mathbb{B}}$ for every $\delta<\gamma,$ and $u\underline{\delta}\dsucc k_\delta$ hence $u^{\mathbb{B}}\leq \bigwedge_{\xi<\gamma} b_\xi=d_\gamma.$ It follows that $u\succ_{\mathbb{B}} k_\gamma.$

    %In particular, $k_\gamma$ accumulates the sequence $(k_\xi)_{\xi<\gamma}.$
    
    By hypothesis $\bigwedge_{\xi<\gamma} b_\xi$ (that is $d_\gamma$) is equal to $\mathbf{0}$, then $k_\gamma\in\falsitydual{\bot},$ 
    hence $u\in\falsitydual{\bot}$.
    It follows that for $v\dfn x$, we have $v\{x:=u\}\in\falsitydual{\bot}$. 
    %On the other hand, $k_\gamma$ accumulates the sequence $(k_\xi)_{\xi<\gamma},$ hence 
    By $\kappa$-\closure, there is $\delta<\gamma$ such that 
    $v\{x:=k_\delta\}\in \falsitydual{\bot},$ that is $k_\delta\in \falsitydual{\bot}.$ It follows that  
    $\bigwedge_{\xi<\gamma} b_\xi=d_\delta=k_\delta^{\mathbb{B}}=\mathbf{0},$ which concludes the proof.
\end{proof}

\section{Transfinite bar-induction}\label{sec: bar induction} 

We introduce now a generalization of bar-induction: while the usual bar-induction can be seen as an induction principle for trees of finite sequences of natural numbers, our transfinite bar-induction is an induction principle for trees of sequences of length below $\gamma,$ where $\gamma$ is a limit ordinal (thus such sequences are not necessarily finite).
Then, we introduce a term which generalizes the bar-recursion operator of \cite{DBLP:conf/csl/Krivine16} to our transfinite case.

Consider a set $A$ and the set $A^{\leq\gamma}$ of sequences in $A$ of length at most $\gamma$. For $s\in A^{\leq\gamma}$, we denote by $|s|$ the length of the sequence (or formally its domain), and by $s\restr\alpha$ the sequence $s'\in A^\alpha$ such that $s'(\xi)=s(\xi)$ for all $\xi<\alpha\leq|s|$. Given an element $x\in A$, the extension of $s$ with $x$ will be denoted by $s\smallfrown x$, which is the sequence $s'$ such that $|s'|=\alpha+1$, $s'\restr\alpha=s$, $s'(\alpha)=x$ (here $|s|=\alpha<\gamma$). For $s, t\in A^{<\gamma},$ we write $s\sqsubseteq t$ when $|s|\leq |t|$ and $t\restr |s|= s.$ Note that when $(s_\alpha)_{\alpha<\delta}$ is a $\sqsubseteq$-increasing sequence of elements of $A^{\leq \gamma},$ then $s:=\bigcup_{\alpha<\delta} s_\alpha$ is a sequence of domain $\bigcup_{\alpha<\delta} \dom{s_\alpha}$ such that $s(\xi) = s_\alpha(\xi)$ for every $\alpha$ such that $\xi\in \dom{s_\alpha}.$  

\begin{definition} Given a limit ordinal $\gamma,$ we introduce the following notions. 
\begin{itemize}
\item A predicate $T(x)$ on $A^{<\gamma}$ is a \emph{tree on $A^{<\gamma}$} if $\exists s\in A^{<\gamma}(T(s))$ ($T$ is not empty) and 
$\forall s\in A^{<\gamma}(T(s)\imp \forall \xi<|s| (T(s\restr\xi)))$ 
\item A \emph{$\gamma$-branch for $T(x)$} is a sequence $b\in A^{\gamma}$ such that 
$\forall \alpha<\gamma\ T(b\restr \alpha).$
\item A \emph{bar} $B(x)$ for $T(x)$ is a predicate on $A^{<\gamma}$ such that $B(x)\imp T(x)$ and for every $\gamma$-branch $b$ for $T(x),$ there exists $\delta<\gamma$ such that $B(b\restr\delta)$.
\item A bar $B(x)$ for $T$ is \emph{inductive} if the following conditions hold:
%$\forall s\in A^{<\gamma} (\forall t (T(t)\land T(s)\land s\sqsubseteq t)\imp B(t))\imp B(s).$
\begin{enumerate}
\item $\forall s\in A^{<\gamma}(T(s)\land \forall a\in A(T(s\smallfrown a)\imp B(s\smallfrown a))\rightarrow B(s))$;
\item for all increasing sequences $(s_\alpha)_{\alpha<\delta}$ with $\delta<\gamma,$  $B(\bigcup_{\alpha<\delta} s_\alpha)\imp \exists \alpha<\delta\ B(s_\alpha)$. 
\end{enumerate}

\end{itemize}
\end{definition}

%A \emph{bar} $B(s)$ for $T(s)$ is a predicate on $A^{<\kappa}$ such that for each $s\in A^{\leq\kappa}$ satisfying $T(s)$, there exists $\delta\leq|s|$ such that $B(s\restr\delta)$. 
\begin{definition}\label{def: transfinite bar induction}
The \emph{transfinite bar-induction principle for a limit ordinal $\gamma$} (in symbols $\mathrm{BI}_\gamma$) states that for any set $A$, given a tree $T$ on $A^{<\gamma}$ and a bar $B$ for $T$, if the tree is closed by $\sqsubseteq$-increasing sequences of length $<\gamma$ and the bar is inductive, then the empty sequence satisfies the bar. 
\end{definition}
This principle generalizes bar-induction as formulated in \cite[Sec. 3]{Berger_Oliva_2005} and \cite[p. 144]{Luckhardt_1973}. We show that 
this principle is equivalent to a choice principle known as 
\emph{Higher Dependent Choice} for a limit ordinal $\gamma,$ and denoted $\mathrm{DC}_\gamma$ (see for instance \cite{holy2025}): for every set $A$ and every tree $T$ over $A^{<\gamma},$ if $T$ has no terminal nodes (i.e. $\forall s\ T(s)\imp \exists a\in A\ T(s\smallfrown a)$) and $T$ is closed under $\sqsubseteq$-increasing sequences of length less than $\gamma,$ then it has a $\gamma$-branch $b.$ 
We denote $\mathrm{DC}_{<\kappa}$ the principle $\forall \gamma<\kappa(\gamma\;\textrm{limit} \rightarrow\mathrm{DC}_\gamma).$ 

%NON CANCELLARE QUESTO COMMENTO: 
%L'articolo da cui abbiamo preso questa %definizione di DC è questo %https://arxiv.org/pdf/2510.14821

\begin{lemma}
    For every limit ordinal $\gamma,$ the transfinite bar-induction principle for $\gamma$ is provable in $\mathrm{ZF + DC_{\gamma}}$.
\end{lemma}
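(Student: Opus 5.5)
We argue by contradiction, following the classical proof of bar induction from dependent choice. Let $T$ be a tree on $A^{<\gamma}$ closed under $\sqsubseteq$-increasing sequences of length $<\gamma$, and let $B$ be an inductive bar for $T$. Suppose $\neg B(\emptyset)$. Since $B$ is a bar, $B(s)\imp T(s)$ holds, and we want to build a $\gamma$-branch $b$ of $T$ such that $\neg B(b\restr\alpha)$ for every $\alpha<\gamma$. This contradicts the fact that $B$ is a bar.

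To obtain $b$, we apply $\mathrm{DC}_\gamma$ to the subtree
\[T'(s)\;:\equiv\; T(s)\land \forall \alpha\leq|s|\ \neg B(s\restr\alpha).\]
Three things must be checked. First, $T'$ is a nonempty tree. It is closed under restrictions because $T$ is and the condition on $B$ is hereditary. It contains $\emptyset$: from $\neg B(\emptyset)$ we get, by inductivity (1), that either $\neg T(\emptyset)$ or some $a$ has $T(\langle a\rangle)\land\neg B(\langle a\rangle)$. In the latter case $T(\emptyset)$ holds anyway by closure under restriction, and $T$ nonempty already gives $T(\emptyset)$. Second, $T'$ has no terminal nodes. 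If $T'(s)$, then $T(s)$ and $\neg B(s)$, so by the contrapositive of inductivity (1) there is $a\in A$ with $T(s\smallfrown a)$ and $\neg B(s\smallfrown a)$, whence $T'(s\smallfrown a)$. Third, $T'$ is closed under $\sqsubseteq$-increasing sequences $(s_\alpha)_{\alpha<\delta}$ with $\delta<\gamma$. Let $s=\bigcup_\alpha s_\alpha$. Then $T(s)$ holds by the hypothesis on $T$. It remains to show $\neg B(s\restr\beta)$ for every $\beta\leq|s|$. If $\beta<|s|$, then $\beta\leq|s_\alpha|$ for some $\alpha$, and $s\restr\beta=s_\alpha\restr\beta$, so this case is covered by $T'(s_\alpha)$. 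If $\beta=|s|$ and $B(s)$ held, then inductivity (2) would give some $\alpha$ with $B(s_\alpha)$, contradicting $T'(s_\alpha)$.

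Now $\mathrm{DC}_\gamma$ yields a $\gamma$-branch $b$ of $T'$. It is also a branch of $T$, yet no initial segment $b\restr\delta$ with $\delta<\gamma$ satisfies $B$. This contradicts $B$ being a bar, so $B(\emptyset)$ holds.

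The main delicate point is the closure of $T'$ under unions. This step uses inductivity (2) in exactly the right direction. One must also make sure that the union $s$ has length $<\gamma$: since $\delta<\gamma$ and each $|s_\alpha|<\gamma$, this requires care when $\gamma$ is singular. If needed, we restrict attention to the sequences that the closure hypothesis on $T$ actually covers, namely those whose union lies in $A^{<\gamma}$, since the definition of $\mathrm{DC}_\gamma$ only requires closure in that sense. A minor technical matter is that "tree" requires nonemptiness, and $\emptyset\in T'$ handles this.
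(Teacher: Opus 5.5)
Your proof is correct and is essentially the paper's argument. You use the same auxiliary tree $T'$ (nodes all of whose initial segments lie in $T$ but not in $B$), the contrapositives of the two inductivity clauses to show it has no terminal nodes and is closed under increasing unions, and $\mathrm{DC}_\gamma$ to obtain a $\gamma$-branch contradicting that $B$ is a bar; your extra checks (that $T'$ contains the empty sequence, and that closure under unions should be read only for unions lying in $A^{<\gamma}$ when $\gamma$ is singular) are simply more detailed than the paper's.
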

\begin{proof}
Let $T(x)$ be a tree on some $A^{<\gamma}$ closed by $\sqsubseteq$-increasing sequences of size $<\gamma$ and let $B(x)$ be an inductive bar for $T$; suppose by contradiction that the empty sequence $\langle\,\rangle$ satisfies $\neg B(\langle\,\rangle)$. Define 
$$T'(s):\iff \forall t\sqsubseteq s (T(t) \land \lnot B(t)),$$ then 
$T'$ defines a tree over $A^{<\gamma}.$ $T'$ has no terminal nodes since $B$ is inductive (contrapositive of item 1). Moreover,  
if $(s_\alpha)_{\alpha<\delta}$ is a $<\gamma$-increasing-sequence of nodes of T', then $\bigcup (s_\alpha)_{\alpha<\delta}$ is in T since T is closed by $<\gamma$-increasing-sequences of nodes, and it is not in the bar since each $s_\alpha$ is not in the bar (contrapositive of item 2).
By $\mathrm{DC}_\gamma,$ $T'$ has a $\gamma$-branch $b,$ contradicting $B$ is a bar for $T.$ \end{proof}

\begin{lemma}
    For every limit ordinal $\gamma,$ $\mathrm{ZF}$ proves that $\mathrm{BI}_\gamma$ implies $\mathrm{DC}_\gamma$.
\end{lemma}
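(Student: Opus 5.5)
We argue by contraposition, reversing the argument of the previous lemma: we assume $\mathrm{BI}_\gamma$ and derive $\mathrm{DC}_\gamma$. Fix a set $A$ and a tree $T$ on $A^{<\gamma}$ with no terminal nodes and closed under $\sqsubseteq$-increasing sequences of length $<\gamma$. Suppose, towards a contradiction, that $T$ has no $\gamma$-branch. We will apply $\mathrm{BI}_\gamma$ to $T$ together with the bar
$$B(s):\iff T(s)\land \lnot\exists t\,(s\sqsubseteq t\land T(t)),$$
where $t$ ranges over $A^{<\gamma}$. Equivalently, $B(s)$ holds iff $s\in T$ has no proper extension in $T$, i.e.\ $s$ is a terminal node. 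We choose this predicate because it can never hold under our hypotheses. Hence the conclusion of $\mathrm{BI}_\gamma$, namely $B(\langle\,\rangle)$, will be false.

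We then check the hypotheses of $\mathrm{BI}_\gamma$ in order.
\begin{enumerate}
\item $B\imp T$ holds by definition, and $T$ is nonempty by assumption.
\item $B$ is a bar for $T$: this is vacuous, because there are no $\gamma$-branches.
\item Inductivity, item 1: if $T(s)$ holds and every $a$ with $T(s\smallfrown a)$ satisfies $B(s\smallfrown a)$, we must show $B(s)$. Since $T$ has no terminal nodes, some $a$ has $T(s\smallfrown a)$. Then $B(s\smallfrown a)$ holds, so $s\smallfrown a$ is a terminal node of $T$, which is impossible. Thus the antecedent already yields a contradiction, and $B(s)$ follows.
\item Inductivity, item 2: we need $B(\bigcup_\alpha s_\alpha)\imp\exists\alpha\, B(s_\alpha)$. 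The antecedent is always false, since no node is terminal, so the implication holds.
\end{enumerate}
With all hypotheses verified, $\mathrm{BI}_\gamma$ gives $B(\langle\,\rangle)$. This contradicts the absence of terminal nodes, since $T(\langle\,\rangle)$ holds by downward closure of the nonempty tree $T$. Therefore $T$ has a $\gamma$-branch.

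The main obstacle is conceptual rather than technical. With the definitions exactly as stated, the contrapositive argument above seems to trivialize the statement. This happens because "bar" is defined only relative to $\gamma$-branches: whenever there are no branches, every predicate below $T$ is a bar. I would double-check whether the intended definition also quantifies over increasing unions or over the nodes of $T$. If it does not, the argument above is a complete proof in $\mathrm{ZF}$, as it uses no choice. If a stronger notion of bar is intended, I would instead take $B(s):\iff T(s)\land$ "the subtree of $T$ above $s$ has no $\gamma$-branch". I would then verify item 1 and item 2 directly: for item 2, if $\bigcup s_\alpha$ has no branch above it, then neither does any $s_\alpha$ … but there is a subtlety, since the direction required is the reverse. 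In that case I would argue by contradiction using the closure of $T$ under increasing unions.
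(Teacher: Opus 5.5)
Your proof is correct, and it takes a different, shorter route than the paper's.

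\textbf{The paper's route.} The paper does not apply $\mathrm{BI}_\gamma$ to $T$ but to the full tree $A^{<\gamma}$, with the bar $B(x)\equiv x\in A^{<\gamma}\land\lnot T(x)$. This is a bar because $T$ has no $\gamma$-branch. Item 1 of inductivity comes from the absence of terminal nodes, and item 2 from the closure of $T$ under increasing unions. The resulting $\lnot T(\langle\,\rangle)$ contradicts the nonemptiness of $T$. That is the classical derivation of dependent choice from bar induction, and it would still work if $\mathrm{BI}_\gamma$ were formulated only for the full tree.

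\textbf{Your route.} You exploit two features of the paper's definitions: $\mathrm{BI}_\gamma$ is relativized to an arbitrary tree, and a bar only has to meet the $\gamma$-branches \emph{of that tree}. The empty predicate is then an inductive bar for $T$ precisely when $T$ has no $\gamma$-branch and no terminal node. So $\mathrm{BI}_\gamma$ instantiated at the empty bar is literally the contrapositive of $\mathrm{DC}_\gamma$.

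\textbf{What each buys.} Your version needs no closure property beyond the one $\mathrm{DC}_\gamma$ already grants for $T$. The paper's version also tacitly needs $A^{<\gamma}$ itself, viewed as a tree, to be closed under increasing unions of length $<\gamma$. That holds automatically only when such unions have length $<\gamma$, e.g.\ for regular $\gamma$.

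So the triviality you noticed is genuine and not a misreading. The paper's notion of bar is exactly the branch-relative one you used, and you can drop the final hedging paragraph and the unfinished alternative.

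One small correction: $\sqsubseteq$ is reflexive, so your displayed $B(s)$ is identically false rather than saying ``$s$ is a terminal node''. This is harmless, since you only use that $B$ never holds. It is cleaner to take $B$ to be the empty predicate outright, or to write strict extension.
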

\begin{proof}
    Consider a tree $T$ on $A^{<\gamma}$ with no terminal nodes and closed by $\sqsubseteq$-sequences of length $<\gamma$ and suppose that there exists no $\gamma$-branch for such $T.$ 
    %(then in particular $T$ is non empty otherwise any $\gamma$-sequence of elements of $A$ is a $\gamma$-branch). 
    Then, the predicate $B(x):\equiv x\in A^{<\gamma}\land \lnot T(x)$ is a bar for $A^{<\gamma}$ seen as a tree: indeed for any $\gamma$-branch $b$, since no $\gamma$-branch is in $T$, then there exists $\delta<\gamma$ such that $\neg T(b\restr\delta)$. We show that $B$ is inductive: for the first condition, we observe that since $T$ has no terminal nodes, we have $\forall s\in A^{<\gamma}(\forall a(\lnot T(s\smallfrown a)\imp \lnot T(s));$ for the second condition, we observe that since $T$ is closed by $\sqsubseteq$-increasing sequences of length $\delta<\gamma,$ we have $\lnot T(\bigcup_{\alpha<\delta}s_\alpha)\imp \exists \alpha<\delta \lnot T(s_\alpha).$ $BI_\gamma$ implies that  $\neg T(\langle\,\rangle),$ but $T$ is non empty (by definition of tree) thus $T(\langle\,\rangle),$ a contradiction. 
\end{proof}
We introduce now the computational counterpart of our transfinite bar-induction in the context of classical realizability. Our transfinite bar-recursion operator is a natural generalization of Krivine's bar-recursion operator as defined in \cite{DBLP:conf/csl/Krivine16}. 

\begin{definition}\label{def: transfinite bar recursion}
Given a realizability algebra that contains a trichotomy operator $\chi$. The \emph{transfinite bar-recursion operator} is defined in $\Lambda_\mathsf{c}$ as the term  
        \[\BR:=\lambda gu.(Y)\lambda hif.(u)(\chi i f)(g)\lambda z.(h (\underline{s})i)(\chi)ifz\]
\end{definition}
To ease the readability, we introduce some notation.
\begin{definition}
    We denote by $\Restr[f, \underline{\alpha}]$ the term $\chi \underline{\alpha}f$, and $\Ind[g, u, f, \underline{\alpha}]$ denotes the term $\lambda z.(\BR gu \underline{\alpha+1})(\chi)\underline{\alpha} f z.$  
\end{definition}

This notation is motivated by the fact that $\chi \underline{\alpha} f$ represents some sequence $f$ restricted to an ordinal $\alpha$ as explained in the comment after Definition \ref{def:trichotomyoperator}, while $\lambda z.(\BR gu \underline{\alpha+1})(\chi)\underline{\alpha} f z$ is the recursive call of our transfinite recursion operator.

With this notation, the reduction of $\BR$ on terms $G, U,\underline{\alpha},t\in\Lambda_\mathsf{c}, \alpha<\kappa$ can be written as
    \begin{equation}\label{eq:BRreduction}
        \BR GU\underline{\alpha}t\dsucc (U)(\Restr[t, \underline{\alpha}])(G)\Ind[G, U, t, \underline{\alpha}].
    \end{equation}

Intuitively, the program considers the current approximation $t\restr \alpha$ (i.e. $\Restr[t, \underline{\alpha}]$) which represents an initial segment of length $\alpha$ of a potentially transfinite construction. The computation proceeds by handing this partial object to $U$, together with a recursive continuation describing how the approximation can be extended beyond $\alpha,$ by applying $G$ to the space of all possible extensions of $t\restr \alpha$ (i.e. $\Ind[G, U, t, \underline{\alpha}]$).

%If so, $U$ produces the corresponding output; otherwise the computation recursively continues.}

%Then, the reductum is the application to $U$ of (a representative for) the sequence $(u_\xi)_{\xi<\kappa}$, where $u_\xi=(G)\Ind[G, U, t, \underline{\alpha}]$ for $\xi\geq\alpha$. The term $(G)\Ind[G, U, t, \underline{\alpha}]$ will be employed to find a term $u$ to properly extend the sequence $(u_\xi)_{\xi<\alpha}$, as shown in Lemma \ref{lem:succstep}.

We show that 
%under certain assumptions on $G$ and $U$, we have $\BR GU\underline{0}\,\underline{0}\in \falsitydual{\bot},$ 
by means of transfinite bar-induction and $\kappa$-\closure, the realizability algebra does not add new $<\kappa$-sequences (Theorem \ref{thm:BRtermination}), this is the key theorem that will imply the main results of this paper.
%It is not wrong to informally think about the condition $\BR GU\underline{0}t_0\Vdash\bot$ as $\BR$ terminates on $G, U$.

We must first impose some conditions on our choice of the terms $\underline{\alpha}.$

\begin{definition}\label{def:numerals} Let $\mathcal{K}=\langle\Lambda_\mathsf{c},\Pi,\succ,\Perp\rangle$ be a realizability algebra and $\kappa\leq |\mathcal{K}|$. We call \emph{ordinal terms} a sequence of terms $(\underline{\alpha})_{\alpha<\kappa}\subseteq\Lambda_\mathsf{c}$ satisfying the following condition:
%\begin{enumerate}
    %\item $\underline{\alpha+1}=(\underline{s}) \underline{\alpha}$ for some term $\underline{s}$
    %\item there exists a sequence of terms $(o_\alpha)_{\alpha<\kappa}$ such that, for every $\xi<\alpha<\kappa,$ $(\underline{\alpha})\underline{\xi}\dsucc o_\xi$
    \item 
    
    for every $\delta<\kappa$ limit and every $v\in \Lambda_c$ with $FV(v)\subseteq \{x\},$ if $v\{x:=\underline{\delta}\}\in \falsitydual{\bot}$ then there is $\alpha<\delta$ such that $v\{x:= \underline{\alpha+1}\}\in \falsitydual{\bot}$  
%\end{enumerate}
\end{definition}

Hat ordinals are defined accordingly, using ordinal terms for the sequence $(\underline{\alpha})_{\alpha<\kappa}$ in Definition \ref{def: hat ordinals}. 
%The ordinal terms $(\underline{\alpha})_{\alpha<\kappa}$ can be thought as generalizations of numerals.  
%satisfying the additional property of \emph{accumulating} some sequence $(u_\alpha)_{\alpha<\kappa}$ ; 
In \cref{subsec: exemple} we will give an example of non trivial realizability algebra that has a system of ordinal terms 
%that accumulate the sequence $(I)_{\alpha<\kappa}$ (where $I$ denotes the identity $\lambda x.x$). 
%So that 
where each $\underline{\alpha}$ accumulates the sequence $(I)_{\beta<\alpha}$ (where $I$ denotes the identity $\lambda x.x$). 
%Note that the sequence $(u_\alpha)_{\alpha<\kappa}$ is fixed for all $\underline{\alpha}$.  

\begin{definition}\label{def: canonical system}
Let $\mathcal{K}$ a realizability algebra  
with a system of ordinal terms $(\underline{\alpha})_{\alpha<\kappa}$ for some regular cardinal $\kappa,$ and containing a trichotomy operator $\chi$ with respect to such ordinal terms. We say that $\mathcal{K}$ has an \emph{inductive system of terms} for $\kappa$ if for every limit $\gamma<\kappa,$ we can associate to every $\leq \delta$-sequence of terms $s:= (u_\xi)_{\xi<\delta}$ with $\delta< \gamma,$ an accumulator $t_s$ satisfying the following properties:    
\begin{enumerate}
        \item if $s$ is the empty sequence, then $t_s$ is $\underline{0}$ (any term is an accumulator of the empty sequence)
        \item if $\delta=\alpha+1$, then  $t_s\dfn (\chi)\underline{\alpha} t_{s\restr \alpha} u_\alpha$ 
        %where $t$ is a fixed minimal accumulator for the sequence $(u_\xi)_{\xi<\alpha}$ given by the $\kappa$-\closure. 
        \item if $\delta$ is a limit ordinal $<\gamma$, then $t_s$ is an accumulator for the sequence $(u_\xi )_{\xi<\delta}$
        %$$(\Restr[t_{s\restr {\beta+1}}, \underline{\beta+1}](G)\Ind[G, U, \underline{\beta+1}, t_{s\restr {\beta+1}}])_{\beta<\alpha}$$
        such that \emph{for every} $\alpha<\delta$ and for every $v\in \Lambda_c$ with $FV(v)\subseteq \{x\},$ if $v\{x:= \Restr(t_s, \underline{\alpha})\}\in \falsitydual{\bot}$ then $v\{x:= \Restr(t_{s\restr \alpha}, \underline{\alpha})\}\in\falsitydual{\bot}.$
        %\item if $\delta=\gamma,$ then $t_s$ is a minimal accumulator for $(t_{s\restr \alpha})_{\alpha<\delta}.$
%Note that in particular $t_s$ is an accumulator for the sequence $(u_\xi)_{\xi<\delta}.$    
\end{enumerate}
Every $t_s$ will be called the \emph{canonical accumulator of} $s$.
\end{definition}

Roughly, item $3$ of the definition above means that for every $\alpha<\delta,$ when considering $t_s$ up to $\alpha,$ it computes exactly the same terms as $t_{s\restr \alpha}.$

In Section \ref{subsec: exemple}, we will give an example of a non trivial $\kappa$-\closed realizability algebra satisfying this condition. 

Now we state and prove our central theorem that will be used to show that $\kappa$-\closed realizability algebras with an inductive system of terms induce realizability models for fragments of choice and preserve cardinals up to $\kappa.$ Intuitively, this theorem says that a realizability algebra satisfying such conditions will not add new $<\kappa$-sequences. 

\begin{theorem}\label{thm:BRtermination}
    Consider an (uncountable) realizability algebra $\mathcal{K}$ such that, for a regular uncountable cardinal $\kappa\leq |\mathcal{K}|$, 
    %there is a sequence of ordinal terms $(\underline{\alpha})_{\alpha<\kappa}\subset \Lambda_\mathsf{c}$, a trichotomy operator $\chi\in\Lambda_\mathsf{c}$ and 
    $\mathcal{K}$ is $\kappa$-\closed 
    and there is an inductive system of terms for $\kappa.$
    Let $\gamma<\kappa$ be a limit ordinal and suppose that for some formula $F(\vec{c}, x, y)$ with parameters $\vec{c}\in M^\mathcal{K},$
    %let $F(\vec{w}, x, y)$ be a formula of $\mathrm{ZF}_{\rlzin}$. Suppose that for some $\vec{c}\in M^\mathcal{K}$, 
    there are terms $G,U$ such that $G\in\falsitydual{\forall x\exists y F[\vec{c}, x,y]}$ and $U\in\bigcap_{f:\gamma\to M^\mathcal{K}}\falsitydual{\neg\forall x^{\hat{\gamma}}F[\vec{c}, x, \tilde{f}(x)]}$. Then, $\BR GU\underline{0}\,\underline{0}\in \falsitydual{\bot}$.
\end{theorem}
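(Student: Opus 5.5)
We argue by contradiction, following Krivine's proof for the countable case but replacing the continuity argument on Baire space by transfinite bar-induction $\mathrm{BI}_\gamma$ (a theorem of $\mathrm{ZFC}$ in the ground model $\mathcal{M}$, since $\mathrm{DC}_\gamma$ holds there) together with $\kappa$-\closure.

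We work with sequences of terms. Let $A=\Lambda_\mathsf{c}^{\textrm{closed}}$. For $s=(u_\xi)_{\xi<\alpha}\in A^{<\gamma}$, let $t_s$ be its canonical accumulator from the inductive system of terms. Define the tree $T(s)$ to hold iff for every $\xi<|s|$ there is a name $c_\xi\in M^\mathcal{K}$ such that $u_\xi\in\falsitydual{F[\vec c,\hat\xi,c_\xi]}$. This predicate is downward closed. It is nonempty because the empty sequence lies in $T$. It is closed under $\sqsubseteq$-increasing unions of length $<\gamma$, since the condition is pointwise. Define the bar
\[B(s):\iff T(s)\land \BR GU\underline{|s|}\,t_s\in\falsitydual{\bot}.\]
The goal is then $B(\langle\,\rangle)$, because $t_{\langle\,\rangle}=\underline 0$. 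By $\mathrm{BI}_\gamma$ it suffices to check three things: that $B$ is a bar, that $B$ satisfies inductivity condition (1), and that $B$ satisfies inductivity condition (2).

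\emph{Bar.} Let $b$ be a $\gamma$-branch. Choose witnesses $c_\xi$ and set $f(\xi)=c_\xi$. Let $u$ be a minimal accumulator of $b$. Then $u$ realizes $\forall x^{\hat\gamma}F[\vec c,x,\tilde f(x)]$, because on a stack $\underline\xi\centerdot\pi$ it reduces to $u_\xi\star\pi$; here we use that $\tilde f(\hat\xi)=f(\xi)$ via the lift. Hence $Uu\in\falsitydual{\bot}$. Taking $v:=(U)x$ and applying the contrapositive of $\kappa$-\closure (\Cref{rmk:kclosure}), we get some $\delta<\gamma$ such that $U w\in\falsitydual\bot$ for every accumulator $w$ of $b\restr\delta$. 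In particular $Uw\in\falsitydual\bot$ for $w=\Restr[t_{b\restr\delta},\underline\delta]$, which agrees with $b\restr\delta$ below $\delta$. By \eqref{eq:BRreduction} and closure of $\Perp$ under antireduction, $\BR GU\underline\delta\,t_{b\restr\delta}\in\falsitydual\bot$, i.e.\ $B(b\restr\delta)$.

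\emph{Successor inductivity.} Suppose $T(s)$ with $|s|=\alpha$, and suppose $B(s\smallfrown a)$ holds for every $a$ with $T(s\smallfrown a)$. We show that $\Ind[G,U,t_s,\underline\alpha]\in\falsitydual{\forall y\neg F[\vec c,\hat\alpha,y]}$, after feeding it $\underline\alpha$ via the restricted quantifier; then $G$ applied to it lies in $\falsitydual\bot$. Take $a\in\falsitydual{F[\vec c,\hat\alpha,d]}$. Then $\Ind[\dots]\star a\centerdot\pi$ reduces to $\BR GU\underline{\alpha+1}\,(\chi\underline\alpha t_s a)\star\pi$. Here $\chi\underline\alpha t_s a$ is exactly $t_{s\smallfrown a}$ by item 2 of \Cref{def: canonical system}, so this process lies in $\Perp$ by hypothesis. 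Then $\BR GU\underline\alpha t_s\dsucc (U)\Restr[t_s,\underline\alpha](G)\Ind[\dots]$, and the argument $(G)\Ind[\dots]$ realizes $\bot$. Hence $B(s)$. A small care point is the matching of $\hat\alpha$ with the argument $\underline\alpha$, since $F$ is applied at the $\alpha$-th coordinate.

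\emph{Limit inductivity} is the main obstacle. Let $s=\bigcup_{\alpha<\delta}s_\alpha$ with $\delta$ limit, and assume $B(s)$. Then $\BR GU\underline\delta t_s\in\falsitydual\bot$. First apply \Cref{def:numerals} to $v:=\BR GU x\,t_s$; this yields $\alpha<\delta$ with $\BR GU\underline{\alpha+1}\,t_s\in\falsitydual\bot$. Next unfold with \eqref{eq:BRreduction}, which exposes $\Restr[t_s,\underline{\alpha+1}]$ and $\Ind[G,U,t_s,\underline{\alpha+1}]$. Then use item 3 of \Cref{def: canonical system}, with $v$ built so that $t_s$ occurs only through $\Restr[t_s,\underline{\alpha+1}]$, to replace $t_s$ by $t_{s\restr(\alpha+1)}$. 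This gives $B(s\restr(\alpha+1))$, and since the $s_\beta$ are cofinal, some $s_\beta$ extends $s\restr(\alpha+1)$. This needs care: the recursive call $\Ind$ uses $\chi\underline{\alpha+1}t_s z$, which only accesses $t_s$ below $\alpha+1$, so it too factors through $\Restr[t_s,\underline{\alpha+1}]$. If the passage from $s\restr(\alpha+1)$ to $s_\beta$ fails, I would instead strengthen $B$ to be upward-monotone along $T$ via the successor step.
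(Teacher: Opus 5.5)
Your proposal is the paper's proof, step for step. It uses the same tree $T$ and the same bar up to \eqref{eq:BRreduction}: you put $\BR GU\underline{|s|}\,t_s$ where the paper puts its reduct, so in the limit step you need forward closure of $\Perp$ under push/grab, which the Remark in Section~\ref{sec: classical realizability} grants. The bar property comes from a minimal accumulator of the branch, as in the paper. The successor clause pads with the $\bot$-realizer $(G)\Ind[G,U,t_s,\underline\alpha]$; you argue directly where the paper argues by contraposition. The limit clause goes through Definition~\ref{def:numerals} and then item~3 of Definition~\ref{def: canonical system}. Your remark that $\Ind[G,U,t_s,\underline{\alpha+1}]$ reaches $t_s$ only through $\Restr[t_s,\underline{\alpha+1}]$ is exactly what item~3 needs, and the paper uses it tacitly.

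Three points need tightening.

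\emph{(i) The end of the limit step.} Having $B(s\restr(\alpha+1))$ and some $s_\beta$ extending $s\restr(\alpha+1)$ does not give $B(s_\beta)$. You also cannot prove that $B$ itself is upward closed, since clause~(1) only moves $B$ down from children to parent. Your fallback does work if it means replacing $B$ by $B^{\uparrow}(s):\iff T(s)\land\exists\tau\sqsubseteq s\,B(\tau)$:
\begin{itemize}
\item this is still a bar;
\item clause~(1) survives, because if no initial segment of $s$ is in $B$, then $B^{\uparrow}(s\smallfrown a)$ forces $B(s\smallfrown a)$;
\item clause~(2) follows from what you proved;
\item $B^{\uparrow}(\langle\,\rangle)$ is just $B(\langle\,\rangle)$.
\end{itemize}
A simpler fix: the proof of $\mathrm{BI}_\gamma$ from $\mathrm{DC}_\gamma$ only needs that $B(\sigma)$ with $|\sigma|$ limit gives $B(\sigma\restr\beta)$ for some $\beta<|\sigma|$. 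That is exactly what you prove. The paper's limit lemma also proves only this, and it reaches $B(s_{\alpha+1})$ only by tacitly taking $s_\alpha=\sigma\restr\alpha$.

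\emph{(ii) The bar step.} The accumulator must be $w=\Restr[t_{b\restr\delta},\underline\delta](G)\Ind[G,U,t_{b\restr\delta},\underline\delta]$. The bare $\chi\underline\delta\,t_{b\restr\delta}$ lacks its default argument, so it is not an accumulator, and it is not the argument of $U$ in \eqref{eq:BRreduction}.

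\emph{(iii) The successor step.} ``$(G)\Ind$ realizes $\bot$'' has to be turned into the statement that $\Restr[t_s,\underline\alpha](G)\Ind[G,U,t_s,\underline\alpha]$ realizes $\forall x^{\hat\gamma}F[x,\tilde f(x)]$. Here $f$ is given by the $T(s)$-witnesses below $\alpha$ and by $\emptyset$ from $\alpha$ on. Only then does the hypothesis on $U$ yield $B(s)$. Also, $G$ needs no ordinal argument: its $\forall x$ is unrestricted, so $G\in\falsitydual{\exists y F[\hat\alpha,y]}$ directly.
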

\begin{proof}
    To simplify the notation and ease readability of the proof, we omit the parameters $\vec{c}.$ 
    We want to apply transfinite bar-induction to a tree of $<\gamma$-sequences of $\lambda_c$-terms, for that we 
    use the inductive system of terms; for every sequence $s,$ we denote $t_s$ its canonical accumulator in the inductive system.
    
    %fix for every $\leq \gamma$-sequence of terms $s:= (u_\xi)_{\xi<\alpha}$ (i.e. with $\alpha\leq \gamma$), a convenient accumulator $t_s$ inductively defined as follows: 
    %\begin{itemize}
    %    \item if $s$ is the empty sequence, then $t_s$ is $\underline{0}$ (any term is an accumulator of the empty sequence)
    %    \item if $\alpha=\beta+1$, then let $t_s\dfn (\chi)\underline{\beta} t u_\beta$ where  
    %    $t$ is a fixed minimal accumulator for the sequence $(u_\xi)_{\xi<\beta}$ given by the $\kappa$-\closure. 
    %    \item if $\alpha$ is a limit ordinal, we let $t_s$ be a \todo{minimal accumulator for the sequence $(t_{s\restr {\beta}} )_{\beta<\alpha}$
        
%Note that in particular $t_s$ is an accumulator for the sequence $(u_\xi)_{\xi<\beta}.$    
%}
%\end{itemize}

    %Given a canonical accumulator $t_s$ for a sequence $s=(u_\xi)_{\xi<\alpha}$, we denote by $lg(t)$ the length of the sequence (namely $\alpha$). 
    Let $H=\BR GU$. We define two predicates $T, B$ on $\Lambda_\mathsf{c}^{<\gamma}$ as follows: for $s:= (u_\xi)_{\xi<\alpha},$
    \[T(s)\dfn \forall\xi<\alpha \exists d\in M^{\mathcal{K}}(u_\xi\in \falsitydual{F[\hat{\xi}, d]}),\]
    \[B(s)\dfn T(s)\ \land\  
    (U)(\Restr[t_s, \underline{\alpha}])(G)\Ind[G, U, t_s, \underline{\alpha}]
    %H\underline{\alpha}t_{s}
    \in \falsitydual{\bot},\]

%when $\alpha$ is a limit ordinal 

%\[B(s)\dfn T(s)\ \land\  Ut_s\in \falsitydual{\bot} 
 %   \]

    It is immediate to see that $T$ is a tree on $\Lambda_\mathsf{c}$ and that it is closed by $\sqsubseteq$-increasing sequences of length $<\gamma.$ Next lemmata show that $B$ and $T$ satisfy the hypothesis of transfinite bar-induction on $\gamma$, namely that 
    %$T$ is closed by $\sqsubseteq$-increasing sequences of length $<\kappa,$ 
    $B$ is a bar for $T$ (Lemma \ref{lem:Binductive}) and that $B$ is inductive (Lemma \ref{lem:succstep}, Lemma \ref{lem:Blimitcase}). Then, by transfinite bar-induction, we can conclude that 
    $(U)(\Restr[\underline{0}, \underline{0}])(G)\Ind[G, U, \underline{0}, \underline{0}]\in \falsitydual{\bot},$ holds as it corresponds to $B(\langle\,\rangle).$ 
Therefore 
    $\BR GU\underline{0}\,\underline{0}\in \falsitydual{\bot}$ since it reduces to the above term. 
    
%In the following, $H$ represents the term $\BR GU$ for $G,U$ such as in the hypothesis of \cref{thm:BRtermination}, and $T, B$ are the predicates defined in the above proof. 

\begin{lemma}\label{lem:Binductive}
    For every $\gamma$-branch  
    $s:=(u_\xi)_{\xi<\gamma}$ of $T,$ there is $\delta<\gamma$ such that $B(s\restr \delta)$
%    $H\underline{\delta}t_{s\restr \delta}\in \falsitydual{\bot}.$      
\end{lemma}
\begin{proof}
    By hypothesis and using $\mathrm{AC}$ in $\mathcal{M}$ (or just $\mathrm{AC}_\gamma$), we can fix a choice function  
    %using $\mathrm{DC}_\gamma$, we can define a function 
    $f:\gamma\to M^\mathcal{K}$ 
    such that for every $\xi<\gamma,$ $u_\xi\in \falsitydual{F[\hat{\xi}, f(\xi)]}.$
By $\kappa$-\closure there is a minimal accumulator $t_s$ for the sequence $(u_\xi)_{\xi<\gamma}.$ Since $t_s$ is an accumulator of the sequence,  
we have $t_s\in\falsitydual{\forall x^{\hat{\gamma}}F[x,\tilde{f}(x)]}$, where $\tilde{f}\in M^\mathcal{K}$ is the lift associated to $f$. 
    Since $U\in\falsitydual{\neg\forall x^{\hat{\gamma}} F[x,\tilde{f}(x)]}$, we have $(U)t_s\in \falsitydual{\bot}$. 
    Since $t_s$ is a minimal accumulator of the sequence $s,$ there exists $\delta<\gamma$ such that for every accumulator $t'$ of the sequence $(u_\xi)_{\xi<\delta},$ we have $Ut'\in\falsitydual{\bot}.$
    %\todo{{\bf Case 1:} $\delta$ is a successor or $0.$} 
    Then consider 
    $$t':= \Restr[t_{s\restr \delta}, \underline{\delta}](G)\Ind[G,U,\underline{\delta},t_{s\restr \delta}],$$ it accumulates the sequence up to $\delta,$ hence $(U)t'\in \falsitydual{\bot}$ that is 
    $$
    %H\underline{\delta}t_{s\restr \delta}\dsucc
    (U)\Restr[t_{s\restr \delta}, \underline{\delta}](G)\Ind[G,U,\underline{\delta},t_{s\restr \delta}]=(U)t'\in \falsitydual{\bot}$$ It follows that %H\underline{\delta}t_{s\restr \delta}\in \falsitydual{\bot},$ hence 
    $B(s\restr \delta).$
%    \todo{{\bf Case 2:} $\delta$ is limit and $>0.$ Then $t_{s\restr \delta}$ is also an accumulator of the sequence $s$ up to $\delta,$ therefore  $Ut_{s\restr \delta}\in \falsitydual{\bot},$ hence $B(s\restr \delta).$ }     
\end{proof}

\begin{lemma}\label{lem:succstep}
    Given a sequence $s:=(u_\xi)_{\xi<\alpha}$ for $\alpha<\gamma,$ suppose that $T(s)$ and for all $u\in\Lambda_\mathsf{c}$, if $T(s\smallfrown u)$ holds then $B(s\smallfrown u)$. Then, $B(s)$ holds.
\end{lemma}
\begin{proof}
    We prove the contrapositive. Suppose $T(s)$ and $\lnot B(s),$
    %$H\underline{\alpha}t_s\not\in \falsitydual{\bot},$ 
    we are going to find $u\in \Lambda_c$ such that $T(s\smallfrown u) \land \lnot B(s\smallfrown u).$ 
    Since $T(s)$ holds, using AC in $\mathcal{M}$ (or just $AC_\gamma$), we can choose 
    %by $\mathrm{DC}_\gamma$ we can fix 
    for every $\xi<\gamma,$ some values $d_\xi$ such that $u_\xi\in \falsitydual{F[\hat{\xi}, d_\xi]}.$ 
    To further simplify the notation, let us write $\Ind$ for $\Ind[G, U, t_s, \underline{\alpha}].$\\ 
    %so that $H\underline{\alpha}t \dsucc (U)(\Restr[t_s, \underline{\alpha}])(G)\Ind.$\\ 
    
    {\bf Claim:} 
    $\Ind\not\in\falsitydual{\forall y(F[\hat{\alpha},y]\rightarrow\bot)}$.\\ 
     
\emph{Proof of claim.} Suppose not, then since $G\in\falsitydual{\forall x\exists y F(x,y)}$, unfolding the definition of the falsity value we have $$G\in\falsitydual{\exists yF[\hat{\alpha}, y]}=
    \falsitydual{\forall y(F[\hat{\alpha},y]\rightarrow\bot) \rightarrow\bot},$$ and in particular $(G)\Ind\in \falsitydual{\bot}$. 
    %\todo{We distinguish two cases.\\ 
    %{\bf Case 1:} $\alpha$ is a successor ordinal or $0.$
    %}
    Then, setting $L\dfn (\Restr[t_s, \underline{\alpha}])(G)\Ind$, for $\xi<\alpha,$ we would have $L \underline{\xi}\dsucc (t_s)\underline{\xi}\dsucc u_\xi\in \falsitydual{F[\hat{\xi}, d_\xi]}$, while for $\xi\geq \alpha,$ $L\underline{\xi}\dsucc (G)\Ind\in \falsitydual{\bot}$. In particular, the latter case implies $(G)\Ind\in \falsitydual{F[\hat{\xi}, \emptyset]}$. Define $f(\hat{\xi})=d_\xi$ if $\xi<\alpha$, and $f(\hat{\xi})=\emptyset$ otherwise. By construction $L\in\falsitydual{\forall x^{\hat{\gamma}}F(x,\tilde{f}(x))}.$ 
    By hypothesis on $U$ we have in particular $U\in\falsitydual{\neg\forall x^{\hat{\gamma}} F(x,\tilde{f}(x))}$ by hypothesis. 
    Hence we have $UL\in \falsitydual{\bot},$
    that is 
    $
    %H\underline{\alpha}t_s\dsucc
    (U)(\Restr[t_s, \underline{\alpha}])(G)\Ind=
    UL\in\falsitydual{\bot}$, contradicting %$H\underline{\alpha}t_s\notin \falsitydual{\bot}$.%\\ 
    $\lnot B(s).$
    %\todo{ {\bf Case 2:} $\alpha$ is a limit ordinal Then $t_s \underline{\xi}\dsucc u_\xi\in \falsitydual{F[\hat{\xi}, d_\xi]}$, while for $\xi\geq \alpha,$ ? 
    %} 
        
    \emph{End proof of claim.}\\

    $\Ind\not\in\falsitydual{\forall y(F[\hat{\alpha},y]\rightarrow\bot)}$ implies that there is a name $d\in M^\mathcal{K}$ and a term $u\in\falsitydual{F[\hat{\alpha}, d]}$ such that $(\Ind)u\notin \falsitydual{\bot}$. 
    On the other hand, 
    $$(\Ind)u\succ H\underline{\alpha+1}t_{s\smallfrown u}\dsucc 
    (U)(\Restr[t_{s\smallfrown u}, \underline{\alpha+1}])(G)\Ind[G, U, t_{s\smallfrown u},\underline{\alpha+1}].
    $$ 
    Therefore we must have $
    %H\underline{\alpha+1}t_{s\smallfrown u}
    (U)(\Restr[t_{s\smallfrown u}, \underline{\alpha+1}])(G)\Ind[G, U, t_{s\smallfrown u},\underline{\alpha+1}]
    \notin \falsitydual{\bot},$
    from which the result follows.
\end{proof}

\begin{lemma}\label{lem:Blimitcase}
    Suppose that for $\delta<\gamma,$ $(s_\alpha)_{\alpha<\delta}$ is a $\sqsubseteq$-increasing sequence such that $T(\bigcup_{\alpha<\delta} s_\alpha)$ and $B(\bigcup_{\alpha<\delta} s_\alpha)$. Then, there exists $\alpha<\delta$ such that $B(s_\alpha)$.
\end{lemma}
\begin{proof}
    Notice that if $\delta$ is a successor ordinal equal to $\beta+1,$ then $\bigcup_{\alpha<\delta} s_\alpha=s_\beta,$ thus $B(s_\beta)$ by assumption. The only interesting case is when $\delta$ is limit. Since $(s_\alpha)_{\alpha<\delta}$ is $\sqsubseteq$-increasing and $T$ is closed by sequences of length $<\gamma,$ we have that  $\sigma:=\bigcup_{\alpha<\delta} s_\alpha$ satisfies $T(\sigma).$ 

%\todo{
%Suppose that 
%$B(\sigma),$ 
%that is $Ut_{\sigma}\in \falsitydual{\bot}.$ 
%    By $\kappa$-\closure and by definition of $t_{\sigma}$, 
%    there is $\alpha<\delta$ such that  
%    $(U)w\in \falsitydual{\bot}$ for every accumulator $w$ of the sequence 
%        $$(\Restr[t_{\sigma\restr\beta}, \underline{\beta}](G)\Ind[G, U, \underline{\beta}, t_{\sigma\restr \beta}])_{\beta<\alpha}$$
%    If $\alpha$ is a limit ordinal, then in particular $(U)t_{s\restr \alpha}\in \falsitydual{\bot},$ hence $B(\sigma\restr \alpha).$ 
%    If $\alpha$ is a successor, then in particular $H\underline{\alpha}t\restr\alpha\dsucc (U)\Restr[t_{\sigma\restr\alpha}, \underline{\alpha}](G)\Ind[G, U, \underline{\alpha}, t_{\sigma\restr \alpha}]$ hence $B(\sigma\restr \alpha).$
% }   
% \end{comment}
    
    To simplify the notation, for every $\alpha\leq \delta$ denote $r_\alpha$ the term $t_{\sigma\restr \alpha}$ which is also $t_{s_\alpha}$ since the sequence is increasing.  
    Suppose $B(\sigma).$ that is 
    %$H\underline{\delta}t_\delta$ with 
    $(U)\Restr[r_{\delta}, \underline{\delta}](G)\Ind[G,U,\underline{\delta},r_{\delta}]\in \falsitydual{\bot}.$
For every $\alpha$ and $r,$ define
$$\varphi[\underline{\alpha}, r]\dfn \Restr[r, \underline{\alpha}](G)\Ind[G,U,\underline{\alpha},r].$$

Let $v \dfn \lambda x,y (U)\varphi[x, y].$ 
Since 
$(U)\varphi[\underline{\delta}, 
r_{\delta}]\in \falsitydual{\bot},$
we have $v\underline{\delta}r_\delta\in \falsitydual{\bot}.$ 
By Definition \ref{def:numerals},  
there is $\alpha<\delta$ such that 
%for every accumulator $w$ of the sequence 
%$(o_{\xi})_{\xi<\alpha},$ 
%we have $(U)\varphi[w,t_\delta]\in\falsitydual{\bot,}$
%In particular for $\underline{\alpha},$ we have
$(U)\varphi[\underline{\alpha+1},r_\delta]\in\falsitydual{\bot}.$ 
By item 3. of the definition of an inductive system of terms, we must have 
$(U)\varphi[\underline{\alpha+1},r_{\alpha+1}]\in \falsitydual{\bot},$ 
hence $B(s_{\alpha+1}).$

\begin{comment}    
    Define a term $w$ such that 
     \[   
           \quad w\underline{\beta}\star \pi\dsucc
                \begin{cases}
                    (t_\alpha)\underline{\beta}\star\pi&\textrm{if }\beta<\alpha;\\
                    (G)Rec[G, U, \underline{\alpha}, t_\alpha]\star\pi&\textrm{if }\beta\geq\alpha.
                \end{cases}
        \]
    
    Then $w$ is an accumulator of $(t_\xi)_{\xi<\alpha}$ hence 
    $(U)\varphi[\underline{\delta}, w]\in \falsitydual{\bot}.$
\end{comment}

\end{proof}
That completes the proof of \cref{thm:BRtermination}. 
\end{proof}

%%%

\section{Choice}\label{sec: ACkappa}

We now show that using transfinite bar-recursion and $\kappa$-\closure, it is possible to realize the Axiom of Choice restricted to families of sets indexed by ordinals in $\kappa;$ we will apply \cref{thm:BRtermination}. 

%\begin{definition}
%In $ZF_\varepsilon,$ let $\gamma$ be an ordinal, denote $(DC_\varepsilon)_\gamma$ the scheme of axiom that for every formula $F(x, y, \vec{a})$ if $\forall x\exists yF(x,y, \vec{a}),$ then there is a function $f$ such that $\forall \alpha\rlzin {\gamma} F(f\restr \alpha,f(\alpha)))$ 
%\end{definition}

%\begin{definition}
%Let $\gamma$ be an ordinal, denote $(AC_\varepsilon)_\gamma$ the following scheme of axiom: for every formula $F(x, y, \vec{a})$  in $\mathscr{F}_{\rlzin},$ if $\forall x\exists yF(x,y, \vec{a}),$ then there is an extensional function $f$ such that $\forall \alpha\rlzin {\gamma} F( \alpha,f(\alpha)), \vec{a})$ 
%\end{definition}

\begin{definition}
    For an ordinal $\gamma$ in $\mathrm{ZF}_\varepsilon$ and a formula $F$ in $\mathscr{F}_{\rlzin},$ 
    we denote $\mathrm{AC}_{\gamma, F}$ the following: 
        \[\mathrm{AC}_{\gamma, F}\dfn
        \forall\vec{w}(\forall x\exists yF(\vec{w},x,y)\rightarrow\exists f( Fun(f)\land\forall x\rlzin \gamma F(\vec{w},x,f(x)))).\]
     
     %For an ordinal $\kappa<|\mathcal{K}|$, we denote  $\mathrm{AC}_{<\hat{\kappa}}$ the schema of axiom $\forall \gamma\rlzin\hat{\kappa}\ \mathrm{AC}_{\gamma}$. 
\end{definition}

\begin{remark}\label{rmk: AC hat gamma implies AC gamma} Observe that for every $\gamma\rlzin\hat{\kappa},$ if $\mathrm{AC}_{\gamma, F}$ holds for every $F$ in $\mathscr{F}_{\rlzin},$ then $AC_{\gamma}$ 
holds, namely every family of $\gamma$ non empty sets has an \emph{extensional} choice function: indeed 
if $\mathcal{E}=\{E_\alpha\}_{\alpha<\gamma}$ is such a family and $F(x, y)$ is the formula $x\varepsilon \gamma\imp y\in E_x,$ then by 
$\mathrm{AC}_{\gamma, F},$ 
there is a function $f$ on $\gamma$ such that for every $\alpha\rlzin \gamma,\ F(\alpha, f(\alpha)),$ namely $f(\alpha)\in E_\alpha.$ By Lemma \ref{lem: trichotomy operator} (using $\chi$), $\hat{\kappa}$ hence $\gamma$ are  $\varepsilon$-TODs, and by Lemma \ref{lem: TODs are good}, for every $\varepsilon$-TOD $a,$ we have $\forall x, y\rlzin a(x\simeq y \imp x = y),$ hence every function on $a$ is extensional. In particular $f$ is an extensional function on $\hat{\gamma}.$    
\end{remark}

\begin{theorem}\label{thm:ACk} Let $\mathcal{K}$ as in the hypotheses of Theorem \ref{thm:BRtermination}. 
    %Consider a realizability algebra $\mathcal{K}$ such that, for some regular uncountable cardinal $\kappa\leq |\mathcal{K}|$, there is a sequence of ordinal terms $(\underline{\alpha})_{\alpha<\kappa}\subset \Lambda_\mathsf{c}$, a trichotomy operator $\chi\in\Lambda_\mathsf{c}$ and $\mathcal{K}$ is $\kappa$-\closed. 
    Then $\mathcal{N}\models \forall \alpha<\hat{\kappa}\ \mathrm{AC}_{\alpha}$.
\end{theorem}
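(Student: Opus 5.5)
We reduce $\forall \alpha<\hat{\kappa}\ \mathrm{AC}_{\alpha}$ to realizing, for each formula $F$ and each limit $\gamma<\kappa$, the schema $\mathrm{AC}_{\hat\gamma,F}$ uniformly in $\gamma$ and $\vec c$. Successor and finite $\alpha$ are handled by passing to a larger limit $\gamma\geq\alpha$ and restricting, since a choice function on $\hat\gamma$ restricts to one on $\hat\alpha\subseteq\hat\gamma$ by Lemma \ref{lem: inclusione hat ordinals}. Every $\alpha\rlzin\hat\kappa$ is some $\hat\beta$ by the definition of $\falsity{\cdot\notrlzin\hat\kappa}$, so realizing $\forall\alpha^{\hat\kappa}$ amounts to handling each $\hat\beta$ with $\beta<\kappa$, where the realizer receives the ordinal term $\underline\beta$. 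By Remark \ref{rmk: AC hat gamma implies AC gamma}, it suffices to realize $\mathrm{AC}_{\hat\gamma,F}$ for every $F$, because extensionality of the function is automatic on the $\varepsilon$-TOD $\hat\gamma$.

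The key step is to realize the contrapositive form
\[\forall\vec w\,\big(\forall x\exists yF(\vec w,x,y)\rightarrow \neg\forall f\,\neg(\mathrm{Fun}(f)\land\forall x^{\hat\gamma}F(\vec w,x,f(x)))\big).\]
The candidate realizer is $\lambda g\lambda k.\,\BR\, g\,U_k\,\underline0\,\underline0$, where $U_k$ is built from $k$ as follows. A stack in the falsity value provides $G\in\falsitydual{\forall x\exists yF[\vec c,x,y]}$ and $k$ realizing $\forall f(\mathrm{Fun}(f)\rightarrow\neg\forall x^{\hat\gamma}F(x,f(x)))$. We specialize $f$ to the lifts $\tilde f$ with $f:\gamma\to M^\mathcal K$. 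Proposition \ref{prop:functionlift} gives a uniform proof-like term $\theta_1$ realizing that $\tilde f$ is an $\rlzin$-function, and Proposition \ref{prop:domain of a function} shows that its domain is $\hat\gamma$. Hence $U:=\lambda l.\,k\,\langle\theta_1,l\rangle$, with the pair coded by the usual $\land$-packing, lies in $\bigcap_{f}\falsitydual{\neg\forall x^{\hat\gamma}F[x,\tilde f(x)]}$. Some bookkeeping is needed here because $f(x)$ inside the formula must be read as the unique $y$ with $\op(x,y)\rlzin f$. I would rewrite $F(x,f(x))$ as $\exists y(\op(x,y)\rlzin f\land F(x,y))$, or as the universal variant, and use $\lambda xy.yx$-style realizers to move between the two forms, uniformly in $\gamma$.

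Theorem \ref{thm:BRtermination} then applies directly and gives $\BR\,G\,U\,\underline0\,\underline0\in\falsitydual{\bot}$, which is exactly what the realizer must produce against the stack. The whole construction is uniform in $\gamma$, $\vec c$ and $F$, and it is proof-like provided $\BR$ (built from $Y$, $\chi$ and $\underline s$) and $\theta_1$ are proof-like. I expect the main obstacle to be this matching of $U$ to the precise hypothesis of Theorem \ref{thm:BRtermination}: the existential over arbitrary names $f$ in $\mathcal N$ has to be collapsed to lifts of ground-model functions. This works because the negated form quantifies universally over $f$, so instantiating $f$ at lifts is legitimate. The real care goes into ensuring that the realizer of $\mathrm{Fun}(\tilde f)$ and of the domain condition do not depend on $f$.

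Finally, the realizer depends on the index $\gamma$ only through the hat-ordinal restricted quantifier. To get $\forall\alpha^{\hat\kappa}$ I would take as first argument the ordinal term $\underline\alpha$. The dependence on whether $\alpha$ is limit can be removed by always working with $\gamma=\alpha+\omega<\kappa$ (using regularity of $\kappa$) and then restricting to $\hat\alpha$. This restriction needs an extra uniform term, built with $\chi$, that converts $\forall x^{\hat\gamma}$ into $\forall x^{\hat\alpha}$.
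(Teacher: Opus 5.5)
Your proposal is correct and is essentially the paper's proof. It uses the same realizer $\lambda g u.\BR\,g(\lambda f.u(\lambda x.x\theta_1 f))\underline{0}\,\underline{0}$, obtained from Theorem~\ref{thm:BRtermination}, together with Remark~\ref{rmk: AC hat gamma implies AC gamma} and a passage from $\alpha$ to the limit $\alpha+\omega$, which the paper carries out internally via the name $\gimel\hat{L}$ and a cofinality lemma. One small simplification: restricting from $\hat\gamma$ to $\hat\alpha$ needs no $\chi$-built term, because $\falsity{\forall x^{\hat\alpha}F}\subseteq\falsity{\forall x^{\hat\gamma}F}$ for $\alpha\leq\gamma$, so the identity already realizes $\forall x^{\hat\gamma}F\rightarrow\forall x^{\hat\alpha}F$.
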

\begin{proof} We are going to prove that for every limit ordinal $\gamma$ below $\kappa,$ and for every formula 
$F\in \mathscr{F}_{\rlzin},$ 
$AC_{\hat{\gamma}, F}$ is realized by a term that does not depend on $\gamma,$ and we will conclude that $AC_\alpha$ holds for every ordinal $\alpha$ below $\kappa.$

    In $\mathcal{M}$, let $L$ be the set of limit ordinals strictly below $\kappa.$ Define $\hat{L}\dfn \{\hat{\gamma};\ \gamma\in L\}$ and consider $\gimel \hat{L}.$

    \begin{lemma} $\Vdash \gimel \hat{L}\subsetneq \hat{\kappa}$
    \end{lemma}
    
    \begin{proof} By Lemma \ref{lem: hat ordinals are epsilon ordinals} we can fix a proof-like term $\theta$ such that for all $\hat{\alpha},$ $\theta$ realizes that $\alpha$ is an $\rlzin$-ordinals. Then in particular $\theta\Vdash \forall x^{\gimel \hat{L}}(x \textrm{ is an $\rlzin$-ordinal}),$ hence $\Vdash \gimel \hat{L}\subseteq Ord.$ Clearly $\falsity{\hat{\kappa}\notrlzin \gimel \hat{L}}=\emptyset,$ hence $\Vdash \hat{\kappa}\notrlzin \gimel \hat{L}.$ By Lemma \ref{lem: inclusione hat ordinals}, we can fix a proof-like term $\theta'$ that realizes $\hat{\gamma}\subseteq \hat{\delta}$ for every ordinal $\gamma < \delta.$ In particular, $\theta'\Vdash \forall x^{\gimel\hat{L}} (x\subseteq \hat{\kappa}).$ It follows that $\Vdash \forall x \rlzin \gimel\hat{L} (Ord(x) \land x< \hat{\kappa}),$ this completes the proof of the lemma.
    \end{proof}

    \begin{lemma}\label{lem: hat limits are cofinal} 
    $\Vdash \forall x^{\hat{\kappa}}\exists y^{\gimel \hat{L}} (x\subseteq y)$
    \end{lemma}

    \begin{proof} As above, we fix a proof-like term $\theta'$ that realizes $\hat{\gamma}\subseteq \hat{\delta}$ for every ordinal $\gamma < \delta$ (Lemma \ref{lem: inclusione hat ordinals});  we prove that $\lambda x.x\theta' \Vdash \forall x^{\hat{\kappa}}((\forall y^{\gimel \hat{L}} x\not \subseteq y)\imp \bot).$ Fix $\alpha<\kappa,$ and consider $t\in \falsitydual{\forall y^{\gimel \hat{L}} (\hat{\alpha} \not\subseteq y)}.$ Let $\gamma\dfn \alpha+\omega,$ then $\gamma$ is a limit ordinal below $\kappa$ (since $\kappa$ is regular uncountable), hence $\hat{\gamma}\in \hat{L}.$  Since $\alpha<\gamma,$ we have $\theta'\Vdash \hat\alpha \subseteq \hat{\gamma},$ hence $t \theta' \in \falsitydual{\bot},$ therefore $\lambda x.x\theta' \star t.\pi \in \Perp$ for every $\pi.$ \end{proof}
    
\begin{lemma}\label{lem: AC for hat limits}
    For every formula $F(\vec{w}, x, y)\in \mathscr{F}_{\rlzin},$ we have 
    \begin{equation}\label{eq:ACgamma}
    %\lambda gu.\BR gu\underline{0}\,\underline{0} 
    \Vdash \forall x^{\gimel\hat{L}}\mathrm{AC}_{x, F}
    %\forall\vec{w}(\forall \vec{x}\exists yF(\vec{w},x,y)\rightarrow\exists f\forall x\rlzin \hat{\gamma} F(\vec{w},x,f(x))).
    \end{equation}
    \end{lemma}
    \begin{proof}    
    Fix $\gamma<\kappa$ limit (in $\mathcal{M}$). To simplify the notation we omit the parameters $\vec{w}$ in $F$. The formula in \cref{eq:ACgamma} is equivalent to the following: 
    $$\forall x\exists yF(x,y)\rightarrow\forall f\neg(Fun(f)\land\forall x^{\hat{\gamma}} F(x,f(x)))\rightarrow\bot,$$ this is the formula we are going to realize. Let $\theta_1$ be the proof-like term given by Proposition \ref{prop:functionlift}, we show that  
    $\lambda gu.(\BR g(\lambda f.u(\lambda x.x\theta_1f))\underline{0}\,\underline{0}$ realizes the formula above. 
    Any stack in its falsity value is in the form $G\centerdot U\centerdot\pi$, for $G\in\falsitydual{\forall x\exists yF(x,y)}$, $U\in\falsitydual{\forall f\neg(Fun(f)\land\forall x^{\hat{\gamma}} F(x,f(x)))}$, and $\pi\in\falsity{\bot}$. We first show that $\lambda f.U(\lambda x.x\theta_1f)\in\bigcap_{f:\gamma\to M^{\mathcal{K}}} \falsitydual{\forall x^{\hat{\gamma}} F(x,\tilde{f}(x))\rightarrow\bot}$, where $\theta_1$ is the term given by Proposition \ref{prop:functionlift}. For, fix $f:\gamma\to M^{\mathcal{K}}$ and consider $\rho\in\falsity{\forall x^{\hat{\gamma}} F(x,\tilde{f}(x))\rightarrow\bot}$, which has the form $t\centerdot\rho'$ for $t\in\falsitydual{\forall x^{\hat{\gamma}} F(x,\tilde{f}(x))}$, $\rho'\in\Pi$. Then, $\lambda x.x\theta_1t\in\falsitydual{Fun(\tilde{f})\land\forall x^{\hat{\gamma}} F(x,\tilde{f}(x))}$ and $U\star\lambda x.x\theta_1t\centerdot\rho'\in\Perp$ by hypothesis on $U$. Therefore, $G$ and $U'\dfn \lambda f.U(\lambda x.x\theta_1f)$ fit the hypothesis of Theorem \ref{thm:BRtermination}, hence $\BR GU'\underline{0}\,\underline{0}\in \falsitydual{\bot},$ which entails $\BR G U'\underline{0}\,\underline{0}\star \pi\in \Perp.$ Therefore,   
        $(\lambda gu.(\BR g(\lambda f.u(\lambda x.x\theta_1f))\underline{0}\,\underline{0})\star G\centerdot U\centerdot \pi \in \Perp.$  
    \end{proof}
    
    Now we can conclude that $\mathcal{N}\models \forall \alpha < \hat{\kappa}\,\mathrm{AC}_\alpha:$ indeed in $\mathcal{N}$ let $\alpha$ an ordinal below $\kappa,$ then by Lemma \ref{lem: hat limits are cofinal} there is $\gamma\in \gimel \hat{L}$ such that $\alpha\leq \gamma.$ By Lemma \ref{lem: AC for hat limits}, $\mathrm{AC}_{\gamma, F}$ holds for every formula $F,$ which obviously implies $\mathrm{AC}_{\alpha, F}.$ By Remark \ref{rmk: AC hat gamma implies AC gamma}, $\mathrm{AC}_\alpha$ holds.     
\end{proof}

%%%

\section{Preservation of cardinals}\label{sec: cardinal preservation}

We proved that for every ordinal $\kappa$ in the ground model a $\kappa$-\closed realizability algebra with an inductive system of terms induces a realizablity model for $<\hat{\kappa}$-choice; in particular, if $\kappa$ is large enough, we can realize possibly uncountable fragments of choice.  However, as explained in \cref{sec:closure}, $\hat{\kappa}$ may collapse, for instance, to $\omega_1,$ implying that only countable choice is guaranteed to hold in $\mathcal{N}.$ For this reason, we show that for every cardinal $\mu\leq\kappa$, its representative $\hat{\mu}$ remains a cardinal in the realizability model, by showing that there exists no surjection from an ordinal $\gamma<\hat{\mu}$ onto $\hat{\mu}.$ Since extensional equality and the strict identity coincide on hat ordinals (Lemma \ref{lem: TODs are good}), every function whose domain and range are hat ordinals is also an $\rlzin$-function, hence it will be enough to show that there is no $\rlzin$-function which is a surjection from an ordinal of $\hat{\mu}$ onto $\hat{\mu}.$ 

Given a name $f$, and two ordinals $\mu, \gamma<\kappa$, define the following formulas: 
\begin{enumerate}
    \item $Tot(f, \hat{\gamma})\equiv \forall x\exists y\ (x\rlzin \hat{\gamma}\imp \op (x, y)\rlzin f)$;
    %\item $Tot(f)\equiv \forall x\exists y\ \op (x, y)\in f$;
    \item $Fun(f, \hat{\gamma})\equiv f$ is  an $\rlzin$-function on $\hat{\gamma};$
    %\forall x^{\hat{\gamma}}\forall y\forall y' \op(x, y)\varepsilon f, \op(x, y')\varepsilon f\imp y=y'$;
    \item $Surj(f, \hat{\gamma}, \hat{\mu})\equiv \forall y^{\hat{\mu}}\exists x^{\hat{\gamma}}\ \op(x, y)\varepsilon f$;
    
\end{enumerate}
The formula $Tot(f, \hat{\gamma}, \hat{\mu})$ is satisfied when $f$ is a total binary relation with domain $\hat{\gamma};$ 
%$Fun(f)$ indicates that $f$ is an $\rlzin$-function on $\hat{\gamma},$ and   
$Surj(f, \hat{\gamma}, \hat{\mu})$ expresses that the image of $f$ is $\hat{\mu}$ (when $f$ is a function from $\hat{\gamma}$ to $\hat{\mu}$, the formula ensures that $f$ is surjective). 

\begin{lemma}\label{lem:luglio}
    Given a function $f: \gamma\to\mu$ in the ground model for $\gamma,\mu<\kappa$, if $f$ is not a surjection, then $\lambda x.xI\Vdash \neg Surj(\tilde{f},\hat{\gamma},\hat{\mu})$;    
\end{lemma}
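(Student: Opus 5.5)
We unfold the falsity value of $\neg Surj(\tilde{f},\hat{\gamma},\hat{\mu})$ and choose a witness $\beta<\mu$ not in the image of $f$. Since $\neg Surj \equiv Surj\imp\bot$, a stack in $\falsity{\neg Surj(\tilde f,\hat\gamma,\hat\mu)}$ has the form $t\centerdot\pi$ with $t\in\falsitydual{Surj(\tilde{f},\hat{\gamma},\hat{\mu})}$ and $\pi\in\Pi$. We then have $\lambda x.xI\star t\centerdot\pi\succ t\star I\centerdot\pi$, so it suffices to show $t\star I\centerdot\pi\in\Perp$. We do this by exhibiting a stack of the form $I\centerdot\pi$ lying in $\falsity{Surj(\tilde f,\hat\gamma,\hat\mu)}$. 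By anti-reduction, $\lambda x.xI\star t\centerdot\pi\in\Perp$ then follows.

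We unfold $Surj(\tilde f,\hat\gamma,\hat\mu)\equiv\forall y^{\hat\mu}\big(\forall x^{\hat\gamma}(\op(x,y)\notrlzin\tilde f)\imp\bot\big)$. By the definition of the restricted quantifier over hat ordinals, its falsity value is the union, over $\beta<\mu$, of the stacks $\underline{\beta}\centerdot s\centerdot\rho$ with $s\in\falsitydual{\forall x^{\hat\gamma}(\op(x,\hat\beta)\notrlzin\tilde f)}$ and $\rho\in\Pi$.

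Here the shape of the realizer is suspicious: $\lambda x.xI$ feeds only $I$ to $t$, not an ordinal term $\underline\beta$. There are two ways to handle this. One is that the paper intends the ordering $\exists x^{\hat\gamma}$ to be handled through the equivalence $\forall x\rlzin\hat\alpha F\leftrightarrow \forall x^{\hat\alpha} F$, with the $\underline\beta$ absorbed into the proof. The other, which I will try first, is that the hypothesis allows the ordinal terms to be instantiated so that $\underline{\beta}$ is itself $I$ (cf.\ the boolean case). Concretely, I will read $Surj$ with $\forall y^{\hat\mu}$ realized by a stack whose head $\underline\beta$ is absorbed, and aim to show that $I\in\falsitydual{\forall x^{\hat\gamma}(\op(x,\hat\beta)\notrlzin\tilde f)}$ for a $\beta\notin\mathrm{im}(f)$.

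The key computation is the following. By the definition of the lift, $\tilde f=\{\langle\op(\hat\xi,f(\xi)),\underline\xi\centerdot\pi\rangle\}$ (the lift of $\hat\xi\mapsto\hat{f(\xi)}$ over $\hat\gamma$). Hence $\falsity{\op(\hat\xi,\hat\beta)\notrlzin\tilde f}$ is nonempty only if $\op(\hat\xi,\hat\beta)=\op(\hat\xi,\widehat{f(\xi)})$ in $\mathcal M$, i.e.\ $\beta=f(\xi)$, assuming $\op$ is injective on names. Since $\beta\notin\mathrm{im}(f)$, every such falsity value is empty. Therefore $\falsity{\forall x^{\hat\gamma}(\op(x,\hat\beta)\notrlzin\tilde f)}=\emptyset$, and every term, in particular $I$, lies in its truth value. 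So the stack built from $I$ and $\pi$ lies in $\falsity{Surj}$, and $t\star I\centerdot\pi\in\Perp$.

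The main obstacle is the bookkeeping of the $\underline\beta$ in the restricted quantifier $\forall y^{\hat\mu}$ against the exact realizer $\lambda x.xI$. If the falsity value genuinely requires $\underline\beta$ in head position, I would instead use the equivalence between $\forall y\rlzin\hat\mu$ and $\forall y^{\hat\mu}$ (or the convention that $\op$-membership in $\tilde f$ is read through the ground-model injectivity of $\op$). The emptiness argument above is the substantive content and remains unchanged.
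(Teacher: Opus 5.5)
Your key computation is the paper's entire proof: fix $\beta<\mu$ with $\beta\notin\im{f}$. Then $\falsity{\op(\hat\alpha,\hat\beta)\notrlzin\tilde f}=\emptyset$ for every $\alpha<\gamma$, so every term, in particular $I$, lies in $\falsitydual{\forall x^{\hat\gamma}(\op(x,\hat\beta)\notrlzin\tilde f)}$. The paper then just asserts that $tI\Vdash\bot$ whenever $t\Vdash Surj(\tilde f,\hat\gamma,\hat\mu)$, so it passes over exactly the point you flag. Your proposal does not close it either. The step ``the stack built from $I$ and $\pi$ lies in $\falsity{Surj}$'' is false: by the definition of $\forall y^{\hat\mu}$, every stack in $\falsity{Surj(\tilde f,\hat\gamma,\hat\mu)}$ has the form $\underline{\xi}\centerdot s\centerdot\rho$ with $\xi<\mu$, so it has two terms in front, while your $\pi$ is arbitrary. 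Neither of your patches repairs this. Passing to $\forall y(y\rlzin\hat\mu\imp\cdots)$ only moves the problem: the instance $y:=\hat\beta$ then needs a realizer of $\hat\beta\rlzin\hat\mu$, which is essentially $\lambda x.x\underline{\beta}$, and inserting the fixed equivalence realizer changes the term anyway. Nor can you choose $\underline\beta=I$, because the ordinal terms are fixed together with the algebra; in Section~\ref{subsec: exemple}, for instance, $\underline{\beta}\neq I$ for $\beta>0$. Even when $\underline\beta=I$, the stack still needs a second term, so the realizer would be $\lambda x.xII$.

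In fact the lemma cannot be proved with the realizer $\lambda x.xI$. Work in the algebra of Section~\ref{subsec: exemple} and let $t:=\lambda b.\lambda s.\mathsf{h}$. Since $t\star u\centerdot v\centerdot\rho\dsucc\mathsf{h}\star\rho$ for all $u,v,\rho$, we have $t\in\falsitydual{Surj(\tilde f,\hat\gamma,\hat\mu)}$. Yet for a stack bottom $\pi_0$ we get $\lambda x.xI\star t\centerdot\pi_0\dsucc\lambda s.\mathsf{h}\star\pi_0$, which is stuck and hence not in $\Perp$.

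What your emptiness computation does establish is $t\star\underline{\beta}\centerdot I\centerdot\rho\in\Perp$ for every $t\in\falsitydual{Surj(\tilde f,\hat\gamma,\hat\mu)}$ and every $\rho$. Hence $(\lambda yx.xyI)\underline{\beta}$ realizes $\neg Surj(\tilde f,\hat\gamma,\hat\mu)$, provided $\underline\beta$ is proof-like. This relies on the same identification of $\neg(\op(x,y)\rlzin\tilde f)$ with $\op(x,y)\notrlzin\tilde f$ that you and the paper both use. The corrected realizer depends on $\beta$, hence on $f$, and this matters downstream: in Theorem~\ref{thm:preserving cardinals}, $\theta_2$ must realize $\neg Surj(\tilde g,\hat\gamma,\hat\mu)$ for all $g$ simultaneously.
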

\begin{proof}
    Fix $\beta<\mu$ such that $\beta\not\in\im{f}$. Then, for every $\alpha<\gamma$, the falsity value $\falsity{\op(\hat{\alpha},\hat{\beta})\notrlzin\tilde{f}}$ is equal to the empty set. Thus, $\falsity{\forall x^{\hat{\gamma}}(\op(x,\hat{\beta})\notrlzin\tilde{f})}=\emptyset$, which implies that any term realizes the formula (in particular $I$). Given $t\Vdash Surj(\tilde{f},\hat{\gamma}, \hat{\mu})$, the term $tI\Vdash\bot$, thus $\lambda x.xI\Vdash\neg Surj(\tilde{f},\hat{\gamma}, \hat{\mu})$.
\end{proof}

\begin{theorem}\label{thm:preserving cardinals}
Let $\mathcal{K}$ as in the hypotheses of Theorem \ref{thm:BRtermination}. 
    %Consider a realizability algebra $\mathcal{K}$ such that, for a fixed cardinal $\kappa\leq |\mathcal{K}|$, there exists a sequence of ordinal terms $(\underline{\alpha})_{\alpha<\kappa}\subset \Lambda_\mathsf{c}$, there exists a trichotomy operator $\chi\in\Lambda_\mathsf{c}$ and $\mathcal{K}$ is $\kappa$-\closed. 
    Then for every cardinal $\mu\leq \kappa,$ we have
    $\mathcal{N}\models \hat{\mu}$ is a cardinal.
\end{theorem}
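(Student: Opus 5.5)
We argue by contradiction in the realizability model: we realize that for every ordinal $\gamma\rlzin\hat{\mu}$ there is no $\rlzin$-function $f$ on $\hat{\gamma}$ which is surjective onto $\hat{\mu}$. As explained at the beginning of this section, extensional equality and identity coincide on hat ordinals (Lemma \ref{lem: TODs are good}), so it suffices to handle $\rlzin$-functions. Moreover every $\gamma\rlzin\hat{\mu}$ is of the form $\hat{\gamma}$ with $\gamma<\mu$. If $\gamma$ is finite or $\mu$ is finite the statement is trivial or follows from absoluteness of the finite hat ordinals. Since $\hat{\gamma}\subseteq\hat{\gamma'}$ for $\gamma\le\gamma'$ (Lemma \ref{lem: inclusione hat ordinals}), and $\gamma+\omega<\mu$ when $\mu$ is an infinite cardinal with $\gamma<\mu$, we may extend a surjection trivially. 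Hence we may assume $\gamma$ is a limit ordinal, $\omega\le\gamma<\mu\le\kappa$; note $\gamma<\kappa$ is then limit as required in Theorem \ref{thm:BRtermination}.

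The core step applies Theorem \ref{thm:BRtermination} with the formula $F(f,x,y)\equiv \op(x,y)\rlzin f$, with parameter a name $f$. Let $G$ be a realizer of $Tot(f,\hat{\gamma})$ (suitably rewritten as $\forall x\exists y F$, restricting via $x\rlzin\hat\gamma$; we fold the hypothesis $x\rlzin\hat{\gamma}$ in, or simply use $F(f,x,y)\equiv x\rlzin\hat{\gamma}\imp\op(x,y)\rlzin f$). Given realizers $s\Vdash Fun(f,\hat{\gamma})$ and $t\Vdash Surj(f,\hat{\gamma},\hat{\mu})$, we build $U$ such that $U\in\falsitydual{\neg\forall x^{\hat{\gamma}}F[x,\tilde{g}(x)]}$ for every $g:\gamma\to M^{\mathcal{K}}$. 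Intuitively, $U$ receives a term $L$ showing that $f$ agrees with the ground-model function $\tilde g$ on $\hat\gamma$. Using the $\rlzin$-function realizer $s$ it converts $t$ into a realizer of $Surj(\tilde{g},\hat{\gamma},\hat{\mu})$: for $y=\hat\beta$, $t$ provides $x=\hat{\alpha}$ with $\op(\hat\alpha,\hat\beta)\rlzin f$, and $L\underline{\alpha}$ provides $\op(\hat\alpha,g(\alpha))\rlzin f$, so $s$ yields $\hat\beta=g(\alpha)$, hence $\op(\hat\alpha,\hat\beta)\rlzin\tilde g$.

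To conclude, we distinguish the values of $g$. If some $g(\alpha)$ is not a hat ordinal $\hat\beta$ with $\beta<\mu$, the identity $\hat\beta=g(\alpha)$ has falsity $\Pi$ and the conversion still goes through vacuously. Otherwise, $g$ is essentially a ground-model function $\gamma\to\mu$, which cannot be onto since $\gamma<\mu$ and $\mu$ is a cardinal in $\mathcal{M}$. Lemma \ref{lem:luglio} then gives $\lambda x.xI\Vdash\neg Surj(\tilde g,\hat\gamma,\hat\mu)$, and feeding it the converted realizer lands in $\falsitydual{\bot}$. Theorem \ref{thm:BRtermination} then yields $\BR GU\underline{0}\,\underline{0}\in\falsitydual{\bot}$. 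Abstracting over $G,s,t$ produces a uniform proof-like term realizing $\forall f\,\neg(Tot\land Fun\land Surj)$, independent of $\gamma$ and $f$.

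The main obstacle is the construction of $U$ uniformly in $g$: the value $g(\alpha)$ may be an arbitrary name rather than a hat ordinal. We must check carefully that the Leibniz equality $\hat\beta=g(\alpha)$ delivered by $s$ can be used to transport $\op(\hat\alpha,\hat\beta)\rlzin f$ into membership in $\tilde{g}$ with the right stack shape $\underline{\alpha}\centerdot\varsigma$. We would first handle this by realizing $\forall x^{\hat\gamma}\forall y(\op(x,y)\rlzin f\imp\op(x,y)\rlzin\tilde g)$ from $L$ and $s$, using $\forall z(x=y\imp (x\rlzin z\imp y\rlzin z))$. If non-hat values cause trouble, we would instead modify $g$ to $g'$ that agrees with $g$ where $g(\alpha)=\hat\beta$ and still apply Lemma \ref{lem:luglio} to $g'$.
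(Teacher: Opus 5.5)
Your proposal is correct and follows the paper's route. You apply Theorem~\ref{thm:BRtermination} with the same formula $F[f,x,y]\equiv x\rlzin\hat{\gamma}\imp\op(x,y)\rlzin f$ and with $G$ realizing $Tot(f,\hat{\gamma})$. Your $U$ turns, uniformly in $g:\gamma\to M^{\mathcal{K}}$, the agreement of $f$ with $\tilde{g}$ on $\hat{\gamma}$ into a contradiction with $\lambda x.xI\Vdash\neg Surj(\tilde{g},\hat{\gamma},\hat{\mu})$. The only difference is bookkeeping: you derive surjectivity of $\tilde{g}$ from $Fun(f,\hat{\gamma})$, $Surj(f,\hat{\gamma},\hat{\mu})$ and Leibniz equality inside the realizer, whereas the paper places this inference in the hypothesis $C(f,\hat{\mu},\hat{\gamma})$ and feeds it $\theta_1,\theta_2$. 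You also spell out the reduction to limit $\gamma$, which the paper leaves implicit.

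Your case split on the values of $g$ is unnecessary. Since $|\gamma|<\mu$, every $g:\gamma\to M^{\mathcal{K}}$ misses some $\hat{\beta}$ with $\beta<\mu$, so the proof of Lemma~\ref{lem:luglio} applies verbatim to every such $g$.
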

\begin{proof}
    Recall that an ordinal $\mu$ is a cardinal when there is no surjection from $\gamma<\mu$ onto $\mu.$
    
    Fix $\mu\leq \kappa,$ we are going to prove that for all $\gamma<\mu$ limit, 
    \begin{equation}\label{eq: surjection}
    \mathcal{N}\models \forall f((Tot(f, \hat{\gamma})\land  \forall g(Fun(g, 
    \hat{\gamma}),\forall x(x\rlzin \hat{\gamma}\imp \op(x, g(x))\rlzin f)\rightarrow\ Surj(g, \hat{\gamma}, \hat{\mu})))\rightarrow\bot).
    \end{equation}
    This formula implies that there is no surjection $f$ from $\hat{\gamma}$ onto $\hat{\mu}$ in the realizability model $\mathcal{N}$. Indeed, if by contradiction such surjection exists in $\mathcal{N}$, then $Tot(f, \hat{\gamma})$ holds. 
    %and for all $\beta\rlzin\hat{\mu}$, the set $f_\beta^{-1} =\{\alpha\rlzin\hat{\gamma}\mid \op(\alpha,\beta)\rlzin f\}$ is non-empty. 
    Moreover, for every $\rlzin$-function $g$ on $\hat{\gamma}$ such that $\forall x\rlzin \hat{\gamma} (\op(x, g(x))\rlzin f),$ we have that $g$ is surjective onto $\hat{\mu}$, because for every $\beta\rlzin\hat{\mu},$ there is $\alpha\rlzin\hat{\gamma}$ such that $f(\alpha)=\beta$ ($f$ is surjective), and  $g(\alpha)=f(\alpha),$ hence $\beta$ is the image of $\alpha$ via $g.$ Therefore, the antecedent of the formula in \cref{eq: surjection} holds, entailing a contradiction.
    
    The formula $\forall g(Fun(g, \hat{\gamma}),\forall x(x\rlzin \hat{\gamma}\imp \op(x, g(x))\rlzin f)\rightarrow\ Surj(g, \hat{\gamma}, \hat{\mu}))$ is equivalent to
    \[C(f, \hat{\mu}, \hat{\gamma})\equiv \forall g(Fun(g, \hat{\gamma}),\lnot Surj(g, \hat{\gamma}, \hat{\mu})\imp \exists x( x\rlzin \hat{\gamma}\land \op(x, g(x))\notrlzin f)),\]
    Thus, the formula in \cref{eq: surjection} can be rewritten as
    \begin{equation}\label{eq: pieta'}
        \forall f(Tot(f, \hat{\gamma}), C(f,\hat{\mu}, \hat{\gamma}) \imp \bot).
    \end{equation}
    We show that this formula is realized by a proof-like term that does not depend on $\mu$ nor $\gamma.$ 

    In order to define an explicit realizer for \cref{eq: pieta'}, we first need to fix some proof-like terms. 
    Observe that, as a consequence of Propositions \ref{prop:functionlift} and \ref{prop:domain of a function}, there exists a proof-like term $\theta_1$ such that for all $g:\gamma\rightarrow M^{\mathcal{K}}$, $\theta_1\Vdash Fun(\tilde{g},\hat{\gamma})$, and $\theta_1$ doesn't depend on $g$ nor $\gamma$. We also denote $\theta_2\dfn \lambda x.xI$, which is the quasi-proof given in Lemma \ref{lem:luglio}. Moreover, we can fix a proof-like term $\theta_3$ such that for every $\delta<\kappa$ and every formula $\varphi,$ we have $\theta_3\Vdash \exists x (x\rlzin \hat{\delta}\land \lnot \varphi(x))\imp \lnot \forall^{\hat{\delta}} x(x\rlzin \hat{\delta}\imp \varphi(x))$. This assumption relies on the fact that there exists a proof-like term which realizes the equivalence $\forall x^{\hat{\delta}}\varphi(x)\leftrightarrow \forall x (x\rlzin \hat{\delta}\imp \varphi(x))$, for every $\delta<\kappa$ and every formula $\varphi(x)$, independently of $\delta, \phi(x)$, together with the fact that the formula $\exists x(x\rlzin \hat{\delta}\land \lnot \varphi(x))$ is equivalent to $\lnot \forall x^{\hat{\delta}}(x\rlzin \hat{\delta}\imp \varphi(x))$.  
    
    With the notation just introduced, we show that  
    \[\lambda g\lambda l.(\BR g(\theta_3 (l\theta_1\theta_2)))\underline{0}\,\underline{0}\Vdash \forall f(Tot(f, \hat{\gamma}), C(f,\hat{\mu}, \hat{\gamma}) \imp \bot)\]
    by means of Theorem \ref{thm:BRtermination}.
    A stack in the falsity value of the formula is of the form $G\centerdot L\centerdot \pi$, where for some $f\in M^{\mathcal{K}}$, $G\in \falsitydual{Tot(f, \hat{\gamma})}$, $L\in \falsitydual{C(f, \hat{\gamma}, \hat{\mu})}$ and $\pi\in \Pi$. If we show that for some formula $F$, the terms $G$ and $U=\theta_3 (L \theta_1 \theta_2)$ meet the requirements of the theorem, then $\BR GU\underline{0}\,\underline{0}\Vdash\bot$, which entails in particular that $\BR GU\underline{0}\,\underline{0}\star\pi$ is in the pole, and so is $\lambda g\lambda l.(\BR g(\theta_3 (l\theta_1\theta_2)))\underline{0}\,\underline{0}\star G\centerdot L\centerdot\pi$.
    Consider the formula 
    \[F[f, x,y]\equiv x\rlzin \hat{\gamma}\imp \op(x, y)\rlzin f.\]
    Since $Tot(f, \hat{\gamma})$ corresponds to $\forall x\exists yF[f, x, y]$,  $G\in \falsitydual{\forall x \exists y F[f, x, y]}$, therefore the result follows from verifying that $\theta_3(L\theta_1\theta_2)\in\bigcap_{g:\gamma\rightarrow M^{\mathcal{K}}}\falsitydual{\neg\forall x^{\hat{\gamma}}(x\rlzin\hat{\gamma}\rightarrow\op(x,\tilde{g}(x))\rlzin f)}$.
    
    For every function $g:\gamma\rightarrow M^{\mathcal{K}}$, $\theta_1\Vdash Fun(\tilde{g}, \hat{\gamma})$ and $\theta_2\Vdash \neg Surj(\tilde{g}, \hat{\gamma}, \hat{\mu})$. This second fact holds since $g$ cannot be a surjection onto $\dom{\hat{\mu}}$, since $\mu$ is a cardinal in the ground model. 
    It follows that $L \theta_1 \theta_2\in\bigcap_{g:\gamma\to M^\mathcal{K}}\falsitydual{\exists x( x\rlzin \hat{\gamma}\land \op(x, \tilde{g}(x))\notrlzin f))}$. 
    By definition, $\theta_3$ realizes $(\exists x( x\rlzin \hat{\gamma}\land \op(x, g(x))\notrlzin f)))\imp (\neg\forall x^{\hat{\gamma}} F[f, x, \tilde{g}(x)]),$ thus for $U\dfn \theta_3 (L \theta_1 \theta_2)$ we have 
    \[U\in\bigcap_{g:\gamma\to M^\mathcal{K}}\falsitydual{\neg\forall x^{\hat{\gamma}} F[f, x, \tilde{g}(x)]}.\]
    This completes the proof, since $G$ and $U$ meet the requirements of \cref{thm:BRtermination}.
    %we have $\BR GU\underline{0}\,\underline{0}\Vdash{\bot},$ which entails $\BR G U \underline{0}\underline{0}\star \pi\in \Perp.$ From that, we can conclude that $\lambda g\lambda l.(\BR g(\theta_3 l\theta_1\theta_2))\underline{0}\,\underline{0} \star G\centerdot L\centerdot \pi\in \Perp.$
    \end{proof}

\section{An example of $\kappa$-\closed realizability algebra}\label{subsec: exemple}

In this section we build an example of realizability algebra that satisfies the $\kappa$-\closure property and the hypotheses of Theorem \ref{thm:BRtermination}. This is a generalization of the BBC algebra, which is a classical realizability algebra defined by Krivine in \cite{DBLP:conf/csl/Krivine16} and inspired by the work of Berardi, Bezem and Coquand in \cite{DBLP:journals/jsyml/BerardiBC98}. We generalize Krivine's construction to define for every regular cardinal $\kappa$, a (non-trivial) realizability algebra $\mathcal{K}=\langle\Lambda_\mathsf{c}, \Pi, \succ, \Perp\rangle$ that satisfies $\kappa$-\closure. 

We define the set of constants $A=\{\chi,\mathsf{h}\}\cup\{\mathsf{d}_\xi\mid\xi<\kappa\}$ where $\chi$ will be a trichotomy operator for a sequence of ordinal terms yet to define, $\mathsf{h}$ will play the role of a \emph{halting instruction}, and each $\mathsf{d}_\xi$ is a “dummy” constant. 
Moreover, we need $\Lambda_\mathsf{c}$ to contain terms representing accumulators of transfinite sequences; to this end, we assume that the set of terms contains for every limit ordinal $\alpha<\kappa,$ for each sequence of terms $(u_\xi)_{\xi<\alpha}\subseteq\Lambda_\mathsf{c}$, a term that we will denote $\bigwedge_{\xi<\alpha} u_\xi;$
terms of this form are called \emph{oracles} in \cite{DBLP:conf/csl/Krivine16}, and \emph{infinite terms} in \cite{DBLP:journals/jsyml/BerardiBC98}.

A sequence of terms $(\underline{\alpha})_{\alpha<\kappa}$ is inductively defined as follows: 

\begin{enumerate}
\item $\underline{0}\dfn I = \lambda x.x$
    \item $\underline{\alpha+1}\dfn \chi\underline{\alpha}\underline{\alpha}I$
    \item when $\alpha$ is limit, 
    $\underline{\alpha} \dfn 
    \bigwedge_{\xi<\alpha} I.$
\end{enumerate}

We consider $\Pi_0$ to be a set of cardinality $\kappa$. 
The rules of reduction are expanded to include the following:  
\begin{itemize}
\item  for any process $p$ and every stack $\pi$, we impose $\mathsf{h}\star\pi\dsucc p$ if and only if $p=\mathsf{h}\star\pi$, and $\mathsf{d}_\xi\star\pi\dsucc p$ if and only if $p=\mathsf{d}_\xi\star\pi$, $\xi<\kappa$. 
\item $\chi$ is a trichotomy operator with respect to the sequence $(\underline{\alpha})_{\alpha<\kappa},$ namely
for any $\alpha,\beta<\kappa$ and any $t,u\in\Lambda_\mathsf{c}, \pi\in\Pi$ we have 
        \[   
           (\chi)\quad \chi\underline{\alpha}\,fg\underline{\beta}\star\pi\succ_{\chi}
                \begin{cases}
                    (f)\underline{\beta}\star\pi&\textrm{if } \beta<\alpha;\\
                    g\star\pi&\textrm{ if $\beta\geq \alpha$}
                \end{cases}
        \]

\item  the following rule called \emph{lim} holds
        \[(lim)\quad\left(\bigwedge_{\xi<\alpha}u_\xi\right)\underline{\beta}\star\pi\succ_{lim} u_\beta\star\pi 
        %\left(\bigwedge_{\xi<\alpha}u_\xi\right)\false\star\pi\succ_{lim} u_0\star\pi;
        \]
    
\end{itemize}

%A natural number $n$ is encoded in $\Lambda_\mathsf{c}$ as $\underline{n}\dfn \underline{s}^n\underline{0}$, 
%where $\underline{0}\dfn \lambda x.x$  (i.e. $\underline{0}=I$), 

%Let $\underline{s}\dfn \lambda nx. x\,\false\lambda y. y\false n$ and $\false\dfn \lambda xy.y$. 

%By means of oracles, we can build $\underline{\alpha}$ for any ordinal $\omega\leq\alpha<\kappa$ as follows: 
%\begin{enumerate}
%\item $\underline{0}\dfn I = \lambda x.x$
 %   \item $\underline{\alpha}= \underline{s}\underline{\beta}$, if $\alpha=\beta+1$ 
 %    \item when $\alpha$ is limit, 
    %\bigwedge_{\xi<\alpha} \underline{\xi+1}$
    %$\underline{\alpha} = \lambda x.(\bigwedge_{\xi<\alpha} \underline{\xi+1})xx$ 
%\end{enumerate}

%We assume that the set of terms contains, for each limit ordinal $\lambda<\kappa$ and for each sequence of terms $(u_\xi)_{\xi<\lambda}\subseteq\Lambda_\mathsf{c}$, 
%an \emph{accumulator} for the sequence $(u_\xi)_{\xi<\lambda}$ that we will denote $\bigwedge_{\xi<\lambda} u_\xi$ such that for any stack $\pi$ and any ordinal $\alpha<\lambda,$ the following rule of reduction holds (we call it \emph{lim}): 
%        \[(lim)\quad\left(\bigwedge_{\xi<\lambda}u_\xi\right)\underline{\alpha}\star\pi\succ_{lim} u_\alpha\star\pi, \quad 
        % \left(\bigwedge_{\xi<\lambda}u_\xi\right)\false\star\pi\succ_{lim} u_0\star\pi;\]

We will show that every $\underline{\alpha}$ is an accumulator of the sequence $(I)_{\beta<\alpha}$ and that $(\underline{\alpha})_{\alpha<\kappa}$ satisfies Definition \ref{def:numerals}.

The pole $\Perp$ is defined by 
    \[\Perp\dfn\{p\in\Lambda_\mathsf{c}\star\Pi\mid \exists\pi(p\dsucc\mathsf{h}\star\pi)\}\]
and the set of proof-like terms $\qproofs$ is the set of closed terms of $\Lambda_\mathsf{c}$ not containing stack-constants nor $\mathsf{h}$ in between their subterms. The pole is trivially closed by antireduction and it is coherent, since all $t\in\qproofs$ can't reduce to $\mathsf{h}$. This concludes the definition of the algebra. 

\begin{remark}
    Observe that the pole $\Perp$ defined above satisfies the additional property of being closed by reduction, namely for $p,q$ processes such that $p\dsucc q$, if $p\in\Perp$ then $q\in\Perp$. This follows from the fact that the reduction is deterministic and that $\mathsf{h}$ is an halting instruction. This property fails in general; for instance, see the pole used to define the “thread model” at the end of \cite{DBLP:journals/corr/abs-1007-0825}.
\end{remark}

\begin{lemma}\label{lemma: ordinal terms accumulate sequence of 0} Every $\underline{\alpha}$ accumulates the sequence $(\underline{I})_{\xi<\alpha},$ 
\end{lemma}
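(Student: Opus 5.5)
We must show: for every $\alpha<\kappa$ and every $\xi<\alpha$, $(\underline{\alpha})\underline{\xi}\dsucc I$, i.e. $\underline{\alpha}\,\underline{\xi}\star\pi\dsucc I\star\pi$ for every stack $\pi$. Here $\underline{I}$ in the statement is just $I=\lambda x.x$. The plan is a transfinite induction on $\alpha$ following the three clauses that define the ordinal terms. No accumulation is needed at $\alpha=0$.

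\emph{Limit case.} Let $\alpha$ be a limit, so $\underline{\alpha}=\bigwedge_{\xi<\alpha}I$. For $\xi<\alpha$ the rule (lim) gives $(\bigwedge_{\xi<\alpha}I)\underline{\xi}\star\pi\succ_{lim} I\star\pi$ at once. This uses only that (lim) is stated for all indices below the length of the oracle, so no induction hypothesis is required.

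\emph{Successor case.} Let $\alpha=\beta+1$, so $\underline{\alpha}=\chi\underline{\beta}\,\underline{\beta}\,I$, and fix $\xi<\beta+1$. By push,
\[
\underline{\alpha}\,\underline{\xi}\star\pi\dsucc \chi\star\underline{\beta}\centerdot\underline{\beta}\centerdot I\centerdot\underline{\xi}\centerdot\pi ,
\]
which is the process $\chi\underline{\beta}\,\underline{\beta}\,I\,\underline{\xi}\star\pi$. We apply the trichotomy rule with $f=\underline{\beta}$ and $g=I$. If $\xi<\beta$, the process reduces to $(\underline{\beta})\underline{\xi}\star\pi$, and the induction hypothesis for $\beta$ gives $\dsucc I\star\pi$. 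If $\xi=\beta$ (the only other possibility, since $\xi\geq\beta$ and $\xi<\beta+1$), it reduces directly to $g\star\pi=I\star\pi$.

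The main (mild) obstacle is bookkeeping. First, the trichotomy rule is stated for the literal process $\chi\underline{\alpha}fg\underline{\beta}\star\pi$, so we must see that the push steps bring us exactly to that shape. Second, $\dsucc$ between terms is defined as a chain of $\succ$ steps valid for all stacks, and the steps above are uniform in $\pi$. Finally, the ordinal terms are defined by recursion and $\chi$'s rule refers to the whole sequence $(\underline{\alpha})_{\alpha<\kappa}$. This is harmless: the rule for $\chi$ is given once the sequence is fixed, and the induction only inspects $\underline{\beta},\underline{\xi}$ with $\xi\leq\beta<\alpha$.
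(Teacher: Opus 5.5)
Your proof is correct and follows the paper's argument: induction on $\alpha$, with the limit case immediate from (lim), and the successor case handled by the $\chi$-rule, splitting into $\xi=\beta$ (which gives $I$ directly) and $\xi<\beta$ (which reduces to $\underline{\beta}\,\underline{\xi}$ and then uses the induction hypothesis). Your explicit range $\xi\leq\beta$ is more careful than the paper's wording. The push steps are unnecessary, since $\underline{\beta+1}\,\underline{\xi}\star\pi$ is already literally $\chi\underline{\beta}\,\underline{\beta}\,I\,\underline{\xi}\star\pi$, which is exactly the shape in which the $\chi$-rule is stated.
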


\begin{proof}
We prove this statement by induction on $\alpha.$
For every limit ordinal $\alpha,$ 
$\underline{\alpha}$ accumulates the sequence $(I)_{\xi<\alpha}$ by definition. 
For the successor case, let $\beta<\alpha,$ then 
$\underline{\alpha+1}\underline{\beta}=\chi \underline{\alpha}\underline{\alpha}I\underline{\beta}$ which reduces to $I$ if $\beta=\alpha,$ else it reduces to 
$\underline{\alpha}\underline{\beta}$ which by induction hypothesis reduces to $I.$ \end{proof}

We want to prove that $(\underline{\alpha})_{\alpha<\kappa}$ is a system of ordinal terms and that this algebra satisfies the $\kappa$-\closure with respect to such ordinal terms. For that, we are going to prove that oracles correspond to minimal accumulators; we need some preliminary lemmas.

We extend the notion of substitution to accumulators: for a variable $x$, a constant $a$, a stack bottom $\sigma$, a term $t$ and a stack $\rho$,
\[\left(\bigwedge_{\xi<\lambda} u_\xi\right)\{x:=t\}=\bigwedge_{\xi<\lambda} (u_\xi\{x:=t\}), \quad \left(\bigwedge_{\xi<\lambda} u_\xi\right)\{a:=t\}=\bigwedge_{\xi<\lambda} (u_\xi\{a:=t\}),\]
\[\left(\bigwedge_{\xi<\gamma} u_\xi\right)\{\sigma:=\pi\}=\bigwedge_{\xi<\gamma} (u_\xi\{\sigma:=\pi\}).\]

In order to show that $\mathcal{K}$ satisfies the $\kappa$-\closure property, we need some intermediate results involving the notion of length of a reduction.

\begin{definition}
    The \emph{length of the reduction} between two processes $p$ and $q$ is defined as
        \[
        \len(p,q)\dfn
        \left\{
        \begin{tabular}{ll}
            $0$& if $p=q$;\\[2pt]
            $\len(p',q)+1$&\parbox[t]{.45\textwidth}{if there exists $p'$ such that $p\succ p'\dsucc q$;}\\[2pt]
            $\omega$&\text{if $p\centernot\dsucc q$}.
        \end{tabular}
        \right.
        \]
\end{definition}

\begin{remark}\label{rk:def pole by len}
    Observe that for a process $p$, $p\in\Perp$ if and only if there exist a integer $n$ and a stack $\rho$ such that $\len(p,\mathsf{h}\star\rho)=n$.
\end{remark}

\begin{lemma}\label{lmm:substack}
    In $\mathcal{K}$, for all $p,q\in\Lambda_\mathsf{c}\star\Pi$, $\sigma\in\Pi_0$, $\pi\in\Pi$, if $p\dsucc q$, then $\len(p,q)=\len(p\{\sigma:=\pi\}, q\{\sigma:=\pi\})$.
\end{lemma}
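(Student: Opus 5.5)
We argue by induction on $n=\len(p,q)$, and the heart of the matter is a one-step lemma. If $p\succ p'$, then $p\{\sigma:=\pi\}\succ p'\{\sigma:=\pi\}$, and this is the \emph{unique} one-step reduct of $p\{\sigma:=\pi\}$ (the reduction in $\mathcal{K}$ is deterministic). Conversely, if $p$ has no reduct (it is an $\mathsf{h}$, $\mathsf{d}_\xi$, variable-headed, or stuck process), we will want $p\{\sigma:=\pi\}$ to have no reduct as well; this is only needed if we want the equality of lengths in full generality, and the forward direction already suffices given determinism. With this lemma in hand, the induction is immediate. For $n=0$ we have $p=q$. For $n>0$, write $p\succ p'\dsucc q$ with $\len(p',q)=n-1$. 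Then $p\{\sigma:=\pi\}\succ p'\{\sigma:=\pi\}$, and by the induction hypothesis $\len(p'\{\sigma:=\pi\},q\{\sigma:=\pi\})=n-1$. Determinism then ensures that $\len$ is computed along this unique path, so the total is $n$.

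The one-step lemma is proved by case analysis on the rule applied. For \emph{push} and \emph{grab} we use that substitution of a stack bottom commutes with term substitution, i.e.
\[t\{x:=u\}\{\sigma:=\pi\}=t\{\sigma:=\pi\}\{x:=u\{\sigma:=\pi\}\}.\]
This holds by a routine induction on $t$, extended to oracles via the clause $(\bigwedge u_\xi)\{\sigma:=\pi\}=\bigwedge(u_\xi\{\sigma:=\pi\})$. For \emph{save}, $\cc\star t\centerdot\rho$ becomes $\cc\star t'\centerdot\rho'$ and reduces to $t'\star k_{\rho'}\centerdot\rho'$, where $k_{\rho'}=k_\rho\{\sigma:=\pi\}$ by the definition of substitution on continuations. \emph{Restore} is symmetric. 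For the rules $\chi$ and \emph{lim}, the key observation is that ordinal terms $\underline{\beta}$ contain no stack bottoms: they are built from $I$, $\chi$, and oracles of $I$. Hence $\underline{\beta}\{\sigma:=\pi\}=\underline{\beta}$, and the comparisons $\beta<\alpha$ are unaffected. The halting and dummy constants are fixed by substitution.

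The main obstacle is making sure that the rules $\chi$ and \emph{lim} fire on the substituted process exactly when they fire on the original. These rules are triggered by syntactic recognition of ordinal terms. The concern is that substituting $\sigma$ could create a new term that happens to equal some $\underline{\beta}$, or could destroy an argument's status as an ordinal term. To rule this out, we argue that a term $u$ satisfies $u\{\sigma:=\pi\}=\underline{\beta}$ iff $u=\underline{\beta}$. We prove this by induction on the structure of $\underline{\beta}$: a stack bottom $\sigma$ occurring in $u$ would leave a stack subterm in $u\{\sigma:=\pi\}$ (since $\pi$ is itself a stack, inside some $k$), whereas $\underline{\beta}$ has none. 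The same care is needed for the claim that substitution preserves the structural shape of oracles. Once this is established, determinism gives that the lengths coincide.
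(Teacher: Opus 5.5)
Your route is exactly the paper's. You do induction on $\len(p,q)$, driven by the one-step simulation $p\succ p'\Rightarrow p\{\sigma:=\pi\}\succ p'\{\sigma:=\pi\}$, and you prove it by the same case analysis: commutation of substitutions for grab, $k_\rho\{\sigma:=\pi\}=k_{\rho\{\sigma:=\pi\}}$ for save/restore, and invariance of ordinal terms for $\chi$ and lim. That induction only shows that $p\{\sigma:=\pi\}$ reaches $q\{\sigma:=\pi\}$ in exactly $n=\len(p,q)$ steps, hence $\len(p\{\sigma:=\pi\},q\{\sigma:=\pi\})\le n$. Your last step, ``determinism ensures that $\len$ is computed along this unique path, so the total is $n$'', silently assumes that $q\{\sigma:=\pi\}$ does not occur earlier on that path. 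The paper's ``it suffices to show $\len(p\{\sigma:=\pi\},p'\{\sigma:=\pi\})=1$'' makes the same assumption. Since $\{\sigma:=\pi\}$ is not injective, the target can occur early, and then the statement as written fails. Take $\sigma\neq\sigma'$ in $\Pi_0$, $D=\lambda x.(k_{\sigma'})(x)x$, $p=(D)D\star\sigma$ and $q=(D)D\star\sigma'$. Then $p\succ D\star D\centerdot\sigma\succ(k_{\sigma'})(D)D\star\sigma\succ k_{\sigma'}\star(D)D\centerdot\sigma\succ q$, so $\len(p,q)=4$. But $p\{\sigma:=\sigma'\}=(D)D\star\sigma'=q\{\sigma:=\sigma'\}$, so the substituted length is $0$. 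The substituted process is cyclic, so even the recursive definition of $\len$ breaks down there.

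What does hold is the exact-step version, plus equality of lengths whenever $q\{\sigma:=\pi\}$ is irreducible, in particular for $q=\mathsf{h}\star\rho$; this is all Corollary~\ref{cor:redlength} needs. Indeed, if $p_i\{\sigma:=\pi\}=q\{\sigma:=\pi\}$ for some $i<n$ on the path $p=p_0\succ\dots\succ p_n=q$, the simulation would give $q\{\sigma:=\pi\}\succ p_{i+1}\{\sigma:=\pi\}$. So you must either restrict the statement this way or supply this argument. Relatedly, the ``main obstacle'' you identify is misplaced. With determinism, only $\underline{\beta}\{\sigma:=\pi\}=\underline{\beta}$ is used, so the converse $u\{\sigma:=\pi\}=\underline{\beta}\Rightarrow u=\underline{\beta}$ is superfluous. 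The converse you float in your first paragraph (stuck processes stay stuck) is false anyway: $\lambda x.x\star\sigma$ is stuck, while its image under $\{\sigma:=I\centerdot\sigma'\}$ reduces. The genuine difficulty is the non-injectivity of the substitution at the target.
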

\begin{proof}
    By induction on the length of the reduction between $p$ and $q$. If $\len(p,q)=0$, then $p=q$ and $p\{\sigma:=\pi\}=q\{\sigma:=\pi\}$. If $\len(p,q)=n+1$, then there exists a process $p'$ such that $p\succ p'$ and $\len(p',q)=n$, thus by inductive hypothesis $\len(p'\{\sigma:=\pi\},q\{\sigma:=\pi\})=n$. To conclude, it suffices to show that $\len(p\{\sigma:=\pi\}, p'\{\sigma:=\pi\})=1$. We proceed by case analysis on the reduction $p\succ p'$.

    $(grab)$ case. $p=\lambda x.t\star u\centerdot\rho$, $p'= t\{x:=u\}\star\rho$ and 
    \[p\{\sigma:=\pi\}=\lambda x.(t\{\sigma:=\pi\})\star u\{\sigma:=\pi\}\centerdot\rho\{\sigma:=\pi\}\succ t\{\sigma:=\pi\}\{x:=u\{\sigma:=\pi\}\}\star\rho\{\sigma:=\pi\}.\] Observe that $t\{x:=u\}\{\sigma:=\pi\} = t\{\sigma:=\pi\}\{x:=u\{\sigma:=\pi\}\}$ (it can be proven by induction on $t$), thus the right-hand side of the reduction is equal to $t\{x:=u\}\{\sigma:=\pi\}\star\rho\{\sigma:=\pi\}=p'\{\sigma:=\pi\}$; in particular $\len(p\{\sigma:=\pi\},p'\{\sigma:=\pi\})=1$.

    $(push)$ case. $p=tu\star\rho$, $p'=t\star u\centerdot\rho$ and $p\{\sigma:=\pi\}$ is readily seen to reduce to $p'\{\sigma:=\pi\}$ by one step of $push$ reduction.

    $(save)$ case. $p=\cc\star t\centerdot\rho$, $p'=t\star k_{\rho}\centerdot\rho$, and
    \[p\{\sigma:=\pi\}=\cc\star t\{\sigma:=\pi\}\centerdot\rho\{\sigma:=\pi\} \succ t\{\sigma:=\pi\}\star k_{\rho\{\sigma:=\pi\}}\centerdot \rho\{\sigma:=\pi\}=p'\{\sigma:=\pi\}.\]
    
    $(restore)$ case. $p=k_{\varsigma}\star t\centerdot\rho$, $p'=t\star\varsigma$ and
    \[p\{\sigma:=\pi\}=k_{\varsigma\{\sigma:=\pi\}}\star t\{\sigma:=\pi\}\centerdot\rho\{\sigma:=\pi\}\succ t\{\sigma:=\pi\}\star\varsigma\{\sigma:=\pi\}=p'\{\sigma:=\pi\}.\]

    $(\chi)$ and $(lim)$ cases are similar to the previous ones. 
\end{proof}

\begin{corollary}\label{cor:redlength}
    For all $t\in\Lambda_\mathsf{c}^{\textrm{closed}}$ and $\sigma\in\Pi_0$, if $\sigma\not\in\substacks{t}$ and there exists $\rho\in\Pi$ such that $t\star\sigma\dsucc\mathsf{h}\star\rho$, then for each $\pi\in\Pi$ there exists $\rho'\in\Pi$ such that $\len(t\star\pi, \mathsf{h}\star\rho')=\len(t\star\sigma, \mathsf{h}\star\rho)$.
\end{corollary}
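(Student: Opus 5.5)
The plan is to apply Lemma \ref{lmm:substack} directly with the substitution $\{\sigma:=\pi\}$, and then to check that this substitution leaves $t$ fixed and sends a halting process to a halting process. Fix $t$, $\sigma$ and $\rho$ as in the statement. Since $t\star\sigma\dsucc\mathsf{h}\star\rho$, the length $n\dfn\len(t\star\sigma,\mathsf{h}\star\rho)$ is a natural number. Now let $\pi\in\Pi$ be arbitrary. Lemma \ref{lmm:substack} applied to $p=t\star\sigma$ and $q=\mathsf{h}\star\rho$ gives
\[\len\bigl((t\star\sigma)\{\sigma:=\pi\},(\mathsf{h}\star\rho)\{\sigma:=\pi\}\bigr)=n.\]

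It remains to compute both sides of the substituted processes. For the left one, $(t\star\sigma)\{\sigma:=\pi\}=t\{\sigma:=\pi\}\star\pi$. I will show by induction on the structure of $t$ that $\sigma\notin\substacks{t}$ implies $t\{\sigma:=\pi\}=t$, and similarly for stacks: $\sigma\notin\substacks{\varsigma}$ implies $\varsigma\{\sigma:=\pi\}=\varsigma$. The induction runs over the clauses for variables, constants, abstractions, applications, continuations $k_\varsigma$ (which use the stack version), stack bottoms $\sigma'\neq\sigma$, and pushes $u\centerdot\varsigma$. It must also cover oracles $\bigwedge_{\xi<\lambda}u_\xi$. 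For these I will read $\substacks{\cdot}$ as extended by the union over the components, mirroring how substitution was extended. Then the oracle case follows from the induction hypothesis on each $u_\xi$, with the induction being well-founded on the term structure. For the right one, $\mathsf{h}\{\sigma:=\pi\}=\mathsf{h}$, since $\mathsf{h}$ is a constant. So the target is $\mathsf{h}\star\rho'$ with $\rho'\dfn\rho\{\sigma:=\pi\}$.

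Combining these, $\len(t\star\pi,\mathsf{h}\star\rho')=n=\len(t\star\sigma,\mathsf{h}\star\rho)$, which is the claim. The only delicate point is the invariance lemma $t\{\sigma:=\pi\}=t$. There the hypothesis $\sigma\notin\substacks{t}$ is essential, because it ensures that no continuation constant hidden in $t$ (for instance $k_\varsigma$ with $\sigma$ at the bottom of $\varsigma$) is altered. I also have to make sure the oracle clause of $\substacks{\cdot}$ is treated consistently with the extended substitution. Everything else is routine bookkeeping.
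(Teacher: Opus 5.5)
Your proposal is correct and follows the paper's proof: apply Lemma \ref{lmm:substack} to $p=t\star\sigma$ and $q=\mathsf{h}\star\rho$ with the substitution $\{\sigma:=\pi\}$, and take $\rho'=\rho\{\sigma:=\pi\}$. The paper leaves implicit the invariance $t\{\sigma:=\pi\}=t$, which follows from $\sigma\notin\substacks{t}$; you spell it out, including a consistent treatment of the oracle clause.
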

\begin{proof}
    It suffices to consider the processes $p=t\star\sigma, q=\mathsf{h}\star\rho$ in the previous lemma. Then, the result follows by setting $\rho'=\rho\{\sigma:=\pi\}$.
\end{proof}

\begin{proposition}\label{prop: oracles are minimal accumulators}
For every sequence $(u_\xi)_{\xi<\gamma}$ with $\gamma<\kappa$ limit, $\bigwedge_{\xi<\gamma} u_\xi$ is a minimal accumulator.  
\end{proposition}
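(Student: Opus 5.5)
Write $u:=\bigwedge_{\xi<\gamma}u_\xi$. Item (1) of Definition \ref{def: kappa-closure} follows from the rule $(lim)$: $u\underline{\beta}\star\pi\succ_{lim}u_\beta\star\pi$ for every $\beta<\gamma$ and every $\pi$. For item (2) I will argue by contraposition, as in Remark \ref{rmk:kclosure}: take $v$ with $\FV{v}\subseteq\{x\}$ and $v\{x:=u\}\in\falsitydual{\bot}$, and find $\delta<\gamma$ such that $v\{x:=w\}\in\falsitydual{\bot}$ for every accumulator $w$ of $(u_\xi)_{\xi<\delta}$.

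\textbf{Step 1: pass to a single finite computation.} Choose a stack bottom $\sigma\in\Pi_0$ not in $\substacks{v\{x:=u\}}$. Such a $\sigma$ should exist because a term contains fewer than $\kappa$ stack bottoms while $|\Pi_0|=\kappa$ is regular. This needs a short check, since oracles are infinitary terms of length $<\kappa$. Then $v\{x:=u\}\star\sigma\in\Perp$, so by Remark \ref{rk:def pole by len} there are $n$ and $\rho$ with $\len(v\{x:=u\}\star\sigma,\mathsf{h}\star\rho)=n$. The reduction is deterministic and each step is one of grab, push, save, restore, $\chi$ or $lim$. Only finitely many $lim$ steps fire on the oracle $u$ (or on copies of it produced by substitution), at arguments $\underline{\beta_1},\dots,\underline{\beta_k}$ with $\beta_i<\gamma$. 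Set $\delta:=\max_i\beta_i+1<\gamma$, using that $\gamma$ is limit.

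\textbf{Step 2: simulate with any accumulator $w$ of $(u_\xi)_{\xi<\delta}$.} I will compare the runs of $v\{x:=u\}\star\sigma$ and $v\{x:=w\}\star\sigma$ step by step. I track a relation between processes: $p'$ is obtained from $p$ by replacing some occurrences of $u$ with $w$ (inside terms, continuations $k_\pi$, and stacks). Structural steps are preserved by this relation, as in the case analysis of Lemma \ref{lmm:substack}. The head can never be $u$ in a position other than $u\star\underline{\beta}\centerdot\pi$, since otherwise the original run would be stuck before $\mathsf{h}$. When the original does $u\star\underline{\beta_i}\centerdot\pi\succ u_{\beta_i}\star\pi$, the copy does $w\star\underline{\beta_i}\centerdot\pi'\dsucc u_{\beta_i}\star\pi'$, because $\beta_i<\delta$ and $w$ accumulates. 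Here I need $w\underline{\beta}\dsucc u_\beta$ in the stack-uniform sense, which holds since $t\succ u$ is defined for all $\pi$. The copy run therefore ends at $\mathsf{h}\star\rho'$, so $v\{x:=w\}\star\sigma\in\Perp$.

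\textbf{Step 3: from $\sigma$ to all stacks.} It remains to pass from $\sigma$ to all stacks via Corollary \ref{cor:redlength}, so $\sigma$ must also avoid $\substacks{v\{x:=w\}}$. Since $w$ is arbitrary, I will fix $\sigma$ after $w$, but $\delta$ has to be chosen independently of $w$. I handle this by noting that the set of $\beta$'s queried does not depend on the choice of fresh $\sigma$: by Lemma \ref{lmm:substack}, renaming one fresh bottom to another preserves the run step by step. So $\delta$ is fixed first, and for each $w$ I then pick $\sigma$ fresh for both $v\{x:=u\}$ and $v\{x:=w\}$. This gives $v\{x:=w\}\in\falsitydual{\bot}$.

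\textbf{Main obstacle.} I expect Step 2 to be the hardest part: making the simulation relation precise when $u$ gets duplicated, captured in continuations by $\cc$, or substituted into other oracles. A secondary difficulty is when a $lim$ step of some other oracle containing $u$ merely selects a component. That is a structural step, and the relation must be shown closed under it via the substitution clauses for $\bigwedge$. I would set this up as an induction on the remaining length $n$, generalized over all related pairs of processes.
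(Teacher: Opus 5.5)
\textbf{Same route as the paper, but Step~2 has a genuine gap, and for one sequence the statement itself is false.} Your plan is the paper's proof: a fresh bottom $\sigma$, the finitely many $(lim)$-queries $\underline{\beta_i}$ on $u$, $\delta:=\max_i\beta_i+1$, a replay with $w$, then Corollary~\ref{cor:redlength}. You are more careful than the paper in two places: you choose $\sigma$ fresh for $v\{x:=w\}$ too, and you do not claim equal reduction lengths. The paper's ``reduces in the same way'' hides the same gap. The rules $(\chi)$ and $(lim)$ fire only when certain arguments are \emph{syntactically} ordinal terms: the first and fourth argument of $\chi$, and the argument of an oracle. So the machine also inspects $u$ outside head position. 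Your claim that $\chi$- and $lim$-steps survive ``replace some occurrences of $u$ by $w$'' holds only if that replacement leaves ordinal terms unchanged. The analogy with Lemma~\ref{lmm:substack} works there because ordinal terms contain no stack bottoms. It fails here because ordinal terms can contain $u$: $\underline{\gamma}=\bigwedge_{\xi<\gamma}I$, so for the constant sequence $u_\xi=I$ one has $u=\underline{\gamma}$.

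\textbf{Counterexample.} Take $v:=\chi\,\underline{0}\,\mathsf{h}\,\mathsf{h}\,x$. Then $v\{x:=\underline{\gamma}\}\star\pi\succ_\chi\mathsf{h}\star\pi$ for every $\pi$, so $v\{x:=\underline{\gamma}\}\in\falsitydual{\bot}$. Now fix any $\delta<\gamma$ and set $w:=\bigwedge_{\xi<\delta+\omega}w_\xi$, with $w_\xi=I$ for $\xi<\delta$ and $w_\xi=\mathsf{d}_0$ otherwise. This $w$ accumulates $(I)_{\xi<\delta}$ by $(lim)$ but is not an ordinal term. Hence $v\{x:=w\}\star\pi$ only pushes down to $\chi\star\underline{0}\centerdot\mathsf{h}\centerdot\mathsf{h}\centerdot w\centerdot\pi$, where no rule applies, and it never reaches $\mathsf{h}$. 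An oracle applied to $x$, which selects its $\gamma$-th component, gives a similar counterexample.

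\textbf{What your argument does prove.} It handles exactly the case where some $u_\xi\neq I$. Then $u$ is not a subterm of any ordinal term, so your replacement relation fixes all ordinal terms. Any occurrence of $u$ in an ordinal slot would make the original run stuck, so it never occurs in a halting run. Every $(\chi)$- and $(lim)$-step is therefore replayed verbatim, and Steps~1--3 go through as you describe. The proposition must be restricted to this case. Note that the excluded case $\underline{\delta}=\bigwedge_{\xi<\delta}I$ is precisely the instance the paper invokes next to show that $(\underline{\alpha})_{\alpha<\kappa}$ is a system of ordinal terms. That step therefore cannot rest on this proposition.
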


\begin{proof} 
    Fix a sequence of terms $(u_\xi)_{\xi<\gamma}$ for some limit ordinal $\gamma<\kappa$. 
    Clearly, $t=\bigwedge_{\xi<\gamma} u_\xi$ accumulates it. 
    To show that the algebra satisfies the second item of Definition \ref{def: kappa-closure} we need to show that $t$ is a \emph{minimal} accumulator, so we need to find an homogeneous bound $\delta$ such that, for all terms $w$ such that $w$ accumulates $(u_\xi)_{\xi<\delta}$, 
    for every term $v\in\Lambda_\mathsf{c},$ the term $v\{x:=w\}$ is contained in $\falsitydual{\bot}$ whenever $v\{x:=t\}$ is. 
    Consider then $v$ such that $v\{x:=t\}\in\falsitydual{\bot}$, which means that for any stack $\pi$, $v\{x:=t\}\star\pi\in\Perp$. 
    Set $p_0\dfn v\{x:=t\}\star\sigma$ for $\sigma\in\Pi_0$ and suppose $\sigma\not\in \substacks{v}\cup\substacks{t}$ (this is possible since all terms contain at most $\lambda<\kappa$ different stack bottoms in between their substerms). Then, $p_0\dsucc \mathsf{h}\star\rho_0$ for some $\rho_0\in\Pi$. We can decompose the reduction in steps where $t$ is in head position and steps where $t$ isn't, thus the reduction can be rewritten as
        $p_0\{x:= t\}\succ p^{(1)}\succ\dots\succ p_i\succ p^{(j)}\succ\dots\succ q$
    where the processes $p_i$ with subscript indexes have $t$ in head position hence they must be in the form $t\underline{\beta_i}\star \pi_i$ 
    for $1\leq i\leq n$, and on the contrary $t$ does not appear at head position in the processes $p^{(j)}$. 
    Define $\delta\dfn\max_{1\leq i\leq n} \beta_i + 1,$ then $\delta<\gamma$ since $\gamma$ is a limit ordinal. Consider a term $w$, which accumulates $(u_\xi)_{\xi<\delta}$. Then, the process $v\{x:=w\}\star\sigma$ reduces in the same way as $v\{x:=t\}\star\sigma$, thus there exists $\rho_0'$ such that $\len(v\{x:=t\}\star\sigma, \mathsf{h}\star\rho_0)=\len(v\{x:=w\}\star\sigma, \mathsf{h}\star\rho_0')$. By Corollary \ref{cor:redlength} for all $\pi$, there is some $\rho'$ such that $\len(v\{x:=w\}\star\pi, \mathsf{h}\star\rho')=\len(v\{x:=w\}\star\sigma, \mathsf{h}\star\rho_0')$ thus $v\{x:=w\}\in\falsitydual{\bot}$.
\end{proof}

From this proposition, two results follow directly. 

\begin{corollary} 
$(\underline{\alpha})_{\alpha<\kappa}$ is a system of ordinal terms. 
\end{corollary}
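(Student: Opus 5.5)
We must show that Definition \ref{def:numerals} holds: for every limit $\delta<\kappa$ and every $v$ with $\FV{v}\subseteq\{x\}$, if $v\{x:=\underline{\delta}\}\in\falsitydual{\bot}$ then $v\{x:=\underline{\alpha+1}\}\in\falsitydual{\bot}$ for some $\alpha<\delta$. The plan is to reduce this to Proposition \ref{prop: oracles are minimal accumulators}, using Lemma \ref{lemma: ordinal terms accumulate sequence of 0} to supply the accumulators.

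First, since $\delta$ is limit, by definition $\underline{\delta}=\bigwedge_{\xi<\delta} I$. This is exactly the oracle built from the constant sequence $(I)_{\xi<\delta}$. By Proposition \ref{prop: oracles are minimal accumulators}, it is a minimal accumulator of that sequence in the sense of Definition \ref{def: kappa-closure}, item 2. So from $v\{x:=\underline{\delta}\}\in\falsitydual{\bot}$ we obtain some $\delta'<\delta$ such that $v\{x:=w\}\in\falsitydual{\bot}$ for every accumulator $w$ of $(I)_{\xi<\delta'}$.

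Next, set $\alpha:=\delta'$, so $\alpha<\delta$. By Lemma \ref{lemma: ordinal terms accumulate sequence of 0}, $\underline{\alpha+1}$ accumulates $(I)_{\xi<\alpha+1}$, and in particular it accumulates the shorter sequence $(I)_{\xi<\delta'}$, since $\delta'\le\alpha+1$ and accumulation only constrains the indices below the length. Taking $w=\underline{\alpha+1}$ gives $v\{x:=\underline{\alpha+1}\}\in\falsitydual{\bot}$, which is the required condition.

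The only step needing care is the exact form of the minimality statement. In the proof of Proposition \ref{prop: oracles are minimal accumulators}, $\delta'$ is obtained as $\max_i\beta_i+1$ over the finitely many head-position calls $t\underline{\beta_i}$, and the argument applies verbatim to any $w$ accumulating the sequence up to $\delta'$. One also checks that replacing $\underline{\delta}$ by $\underline{\alpha+1}$ introduces no new stack bottoms. This holds because $\underline{\alpha+1}$ is a proof-like term built from $\chi$, $I$ and oracles of $I$, so it has no substacks, and the choice of a fresh $\sigma$ in that proof is unaffected. With this checked, the corollary follows directly.
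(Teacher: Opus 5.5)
\textbf{There is a genuine gap.} Your argument is the paper's argument: apply Proposition~\ref{prop: oracles are minimal accumulators} to the oracle $\underline{\delta}=\bigwedge_{\xi<\delta}I$, then take $w=\underline{\alpha+1}$ via Lemma~\ref{lemma: ordinal terms accumulate sequence of 0}. Your handling of a $\delta'$ that need not be a successor, by passing to the shorter sequence, is fine. The gap is the step you flag yourself: the claim that the minimality argument ``applies verbatim'' to $w=\underline{\alpha+1}$. It does not, and in fact the statement fails for this algebra.

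The proof of the proposition only tracks occurrences of $t$ in head position applied to some $\underline{\beta_i}$. Here, however, $t=\underline{\delta}$ is itself an ordinal term. So the machine can also inspect it in \emph{argument} position: as the last argument of $\chi$, or as the index fed to another oracle through the rule (lim). In those positions it is distinguished from every $\underline{\alpha+1}$ with $\alpha<\delta$, because $\alpha+1<\delta$ when $\delta$ is limit.

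Concretely, let $v:=\chi\,\underline{\delta}\,\mathsf{d}_0\,\mathsf{h}\,x$.
\begin{itemize}
\item For every $\pi$, the rule $(\chi)$ (the comparison is $\delta\geq\delta$) gives $v\{x:=\underline{\delta}\}\star\pi\dsucc\mathsf{h}\star\pi$. Hence $v\{x:=\underline{\delta}\}\in\falsitydual{\bot}$.
\item For each $\alpha<\delta$, since $\alpha+1<\delta$, we get $v\{x:=\underline{\alpha+1}\}\star\pi\dsucc\mathsf{d}_0\star\underline{\alpha+1}\centerdot\pi$. This process is stuck and never reaches $\mathsf{h}$, so $v\{x:=\underline{\alpha+1}\}\notin\falsitydual{\bot}$.
\end{itemize}
Another counterexample is $v:=(\bigwedge_{\xi<\delta+\omega}u_\xi)\,x$ with $u_\delta=\mathsf{h}$ and $u_\xi=\mathsf{d}_0$ otherwise.

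In this reduction $t$ never reaches head position, so the proof of the proposition imposes no lower bound on $\delta'$. Yet for any $\delta'<\delta$, taking $w=\underline{\delta'+1}$, the same $v$ refutes Proposition~\ref{prop: oracles are minimal accumulators} for the constant sequence $(I)_{\xi<\delta}$. So the gap is inherited from the paper's own proof.

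Your extra check that no fresh stack bottom is disturbed is correct, but it does not touch this issue. Obtaining a true statement requires restricting the admissible terms $v$ in Definition~\ref{def:numerals}, or representing limit ordinal terms differently. A different argument alone cannot fix it.
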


\begin{proof}
%We just need to show item $2$ of Definition \ref{def:numerals}. 
Let $\delta$ be a limit ordinal, then by Proposition \ref{prop: oracles are minimal accumulators}, $\underline{\delta}$ is a minimal accumulator of $(I)_{\xi<\delta}.$ So if $v\{x:= \underline{\delta}\}\in \falsitydual{\bot},$ there is $\alpha$ such that for every accumulator $w$ of $(I)_{\xi<\alpha+1},$ we have $v\{x:= w\}\in \falsitydual{\bot}.$ In particular, since $\underline{\alpha+1}$ is an accumulator of $(I)_{\xi<\alpha+1}$ (by Lemma \ref{lemma: ordinal terms accumulate sequence of 0}), we have $v\{x:= \underline{\alpha+1}\}\in \falsitydual{\bot}.$
\end{proof}

\begin{corollary}\label{prop:example}
    The realizability algebra $\mathcal{K}$ defined above satisfies the $\kappa$-\closure property. 
\end{corollary}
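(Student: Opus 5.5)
This follows almost immediately from Proposition \ref{prop: oracles are minimal accumulators}, so I only need to check the two clauses of Definition \ref{def: kappa-closure} against the oracle terms already present in $\Lambda_\mathsf{c}$. First, the side conditions of the definition hold by construction. $\kappa$ is a regular uncountable cardinal with $\omega<\kappa\leq|\mathcal{K}|$, because $\Lambda_\mathsf{c}$ contains at least the $\kappa$ constants $\mathsf{d}_\xi$ and the $\kappa$ distinct ordinal terms. It is also uncountable, since $\Pi_0$ has cardinality $\kappa$.

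Fix a limit ordinal $\gamma<\kappa$ and a sequence $(u_\xi)_{\xi<\gamma}\subseteq\Lambda_\mathsf{c}^{\textrm{closed}}$. I take as witness the oracle $u\dfn\bigwedge_{\xi<\gamma}u_\xi$, which the construction guarantees is a term of $\Lambda_\mathsf{c}$; it is closed because each $u_\xi$ is, by the extended definition of substitution on oracles. Clause (1) is given directly by the rule $(lim)$: for every $\xi<\gamma$ and every stack $\pi$ we have $u\underline{\xi}\star\pi\succ_{lim}u_\xi\star\pi$, so $(u)\underline{\xi}\dsucc u_\xi$. Clause (2) is exactly what Proposition \ref{prop: oracles are minimal accumulators} states. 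For $v$ with $\FV{v}\subseteq\{x\}$ and $v\{x:=u\}\in\falsitydual{\bot}$, that proposition provides a $\delta<\gamma$, namely one plus the maximum of the finitely many indices $\beta_i$ fed to $u$ in head position along a halting reduction. It then shows $v\{x:=w\}\in\falsitydual{\bot}$ for every accumulator $w$ of $(u_\xi)_{\xi<\delta}$.

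The only point where I would slow down is the uniformity in the stack. Proposition \ref{prop: oracles are minimal accumulators} runs the reduction on a fresh stack bottom $\sigma$ and then transfers it to every $\pi$ using Corollary \ref{cor:redlength}. This needs a $\sigma\in\Pi_0$ not occurring in $\substacks{v}\cup\substacks{u}$ or in $\substacks{w}$. Such a $\sigma$ exists because $|\Pi_0|=\kappa$ and every term mentions fewer than $\kappa$ stack bottoms: an oracle of length $<\kappa$ over terms each using $<\kappa$ bottoms still uses $<\kappa$ of them, by regularity of $\kappa$. With this remark in place, the corollary follows, and $\bigwedge_{\xi<\gamma}u_\xi$ serves as the minimal accumulator required by the definition of $\kappa$-\closure.
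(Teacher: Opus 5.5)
Your route is exactly the paper's: it also derives this corollary in one line from Proposition~\ref{prop: oracles are minimal accumulators}, using the oracle $\bigwedge_{\xi<\gamma}u_\xi$ as the witness. However, that step fails, in your proof and in the paper's, for the constant sequence $(I)_{\xi<\gamma}$. Its oracle is literally the ordinal term $\underline{\gamma}$, and ordinal terms are inspected syntactically by $\chi$ (first and fourth arguments) and by $(lim)$. You read $\delta$ off the indices fed to $u$ in head position, which presupposes that $u$ can affect the computation only from head position; that is false here. Concretely, let $v:=\chi\,x\,(\lambda z.\mathsf{d}_0)\,\mathsf{h}\,\underline{\gamma}$. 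By the case $\beta\geq\alpha$ of the $\chi$-rule, $v\{x:=\underline{\gamma}\}\star\pi\succ\mathsf{h}\star\pi$ for every $\pi$, so $v\{x:=\underline{\gamma}\}\in\falsitydual{\bot}$ even though $\underline{\gamma}$ is never in head position. Now fix any $\delta<\gamma$. The term $w:=\underline{\gamma+\omega}$ accumulates $(I)_{\xi<\delta}$ by one $(lim)$-step. On the other hand $v\{x:=w\}\star\pi\succ(\lambda z.\mathsf{d}_0)\underline{\gamma}\star\pi\dsucc\mathsf{d}_0\star\pi$, which is stuck, so $v\{x:=w\}\notin\falsitydual{\bot}$. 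Hence $\underline{\gamma}$ is not a minimal accumulator of $(I)_{\xi<\gamma}$, and clause (2) is not established for this sequence.

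To repair it, note first that the only oracles occurring as subterms of ordinal terms are the $\underline{\alpha}$ with $\alpha$ limit. So for every other sequence, whenever $T\{x:=u\}$ is an ordinal term in a position inspected by $\chi$ or $(lim)$, $x$ cannot occur in $T$, and $T\{x:=w\}$ is the same ordinal term. This is what makes the head-position analysis legitimate. For the constant sequence $(I)_{\xi<\gamma}$ you need a different witness that is not a subterm of any ordinal term, for instance $\chi\,\underline{\gamma}\,\underline{\gamma}\,\mathsf{d}_0$. It accumulates $(I)_{\xi<\gamma}$ by a $\chi$-step followed by a $(lim)$-step, and the same analysis applies to it: a query $\underline{\beta}$ with $\beta\geq\gamma$ ends at $\mathsf{d}_0$, so halting runs only issue queries below $\gamma$. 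A smaller point concerns your stack remark. Choosing $\sigma$ fresh also for $w$ makes the reduction that defines $\delta$ depend on $w$, whereas $\delta$ must be fixed before $w$ is given. You can fix this in either of two ways. One is to observe, via Lemma~\ref{lmm:substack} and determinism, that the queried indices are the same for every fresh bottom. The other is to transport the reduction of $v\{x:=u\}\star\sigma$ to an arbitrary $\pi$ first and only then replace $u$ by $w$, which needs no freshness with respect to $w$ at all.
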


We can define an inductive system of terms, from which it will follow that the structure we defined satisfies the hypothesis of Theorem \ref{thm:BRtermination}. 

\begin{proposition}
For $\kappa\leq |\mathcal{K}|,$ there is an inductive system of terms.
\end{proposition}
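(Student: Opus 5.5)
We construct the canonical accumulators $t_s$ by recursion on the length $\delta$ of $s=(u_\xi)_{\xi<\delta}$, following the three clauses of Definition \ref{def: canonical system}. For the empty sequence we set $t_s:=\underline{0}$. For $\delta=\alpha+1$ we set $t_s:=(\chi)\underline{\alpha}\,t_{s\restr\alpha}\,u_\alpha$. By the rule $(\chi)$, $t_s\underline{\xi}$ reduces to $t_{s\restr\alpha}\underline{\xi}\dsucc u_\xi$ for $\xi<\alpha$ (induction hypothesis), and to $u_\alpha$ for $\xi=\alpha$; so $t_s$ is an accumulator. For $\delta$ limit we take the oracle $t_s:=\bigwedge_{\xi<\delta}u_\xi$. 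It accumulates $s$ by the rule $(lim)$, and it is a minimal accumulator by Proposition \ref{prop: oracles are minimal accumulators}.

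The only nontrivial requirement is item 3 in the limit case. Fix $\alpha<\delta$ and $v$ with $\FV{v}\subseteq\{x\}$ such that $v\{x:=\chi\underline{\alpha}t_s\}\in\falsitydual{\bot}$. We must show $v\{x:=\chi\underline{\alpha}t_{s\restr\alpha}\}\in\falsitydual{\bot}$. The idea is a simulation argument modelled on the proof of Proposition \ref{prop: oracles are minimal accumulators}.

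Take a fresh stack bottom $\sigma\notin\substacks{v}\cup\substacks{t_s}$, and consider the reduction $v\{x:=\chi\underline{\alpha}t_s\}\star\sigma\dsucc\mathsf{h}\star\rho$. Run in parallel the reduction of $v\{x:=\chi\underline{\alpha}t_{s\restr\alpha}\}\star\sigma$; the two processes differ only in the occurrences of the substituted subterm. Whenever $t_s$ reaches head position, it does so only through a $(\chi)$ step of the form $\chi\underline{\alpha}t_s g\underline{\beta}\star\pi\succ t_s\underline{\beta}\star\pi$ with $\beta<\alpha$, since for $\beta\ge\alpha$ the branch $g$ is taken and $t_s$ is discarded. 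Then $t_s\underline{\beta}\succ u_\beta$, while on the other side $t_{s\restr\alpha}\underline{\beta}\dsucc u_\beta$. Hence both computations reach the same process, up to substitution of the subterm and a possibly different number of administrative steps.

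Formally, we prove by induction on $\len$ a lemma: if $p\dsucc\mathsf{h}\star\rho$ and $p'$ is obtained from $p$ by replacing occurrences of $\chi\underline{\alpha}t_s$ by $\chi\underline{\alpha}t_{s\restr\alpha}$, then $p'\dsucc\mathsf{h}\star\rho'$ for some $\rho'$. Here determinism of the reduction and the fact that $\mathsf{h}$ is inert are used. Corollary \ref{cor:redlength} then transfers the result from $\sigma$ to an arbitrary stack $\pi$.

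The main obstacle is making this simulation rigorous. Occurrences of $t_s$ could be copied into continuations or stacks, and a bare $t_s$ could be applied to something other than an ordinal term $\underline{\beta}$ with $\beta<\alpha$. The key observation is that inside $v\{x:=\chi\underline{\alpha}t_s\}$, the term $t_s$ only ever occurs wrapped as $\chi\underline{\alpha}t_s$. It can be unwrapped only by a $\chi$-step whose index argument is a genuine $\underline{\beta}$ with $\beta<\alpha$, and that step immediately feeds it $\underline{\beta}$. So the invariant we carry through the induction is that every occurrence of $t_s$ is either of the form $\chi\underline{\alpha}t_s$ or in head position applied to such a $\underline{\beta}$. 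We also check that substitution commutes with the reduction rules, as in Lemma \ref{lmm:substack}.
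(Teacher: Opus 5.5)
Your proposal is correct and follows essentially the paper's proof. It uses the same recursive definition of $t_s$, with one harmless change: you take the plain oracle $\bigwedge_{\xi<\delta}u_\xi$ at limits instead of the paper's $\bigwedge_{\xi<\delta}(t_{s\restr\xi+1}\underline{\xi})$, which is fine because the argument only needs $t_s\underline{\beta}$ and $t_{s\restr\alpha}\underline{\beta}$ to reduce to $u_\beta$ for $\beta<\alpha$. It also uses the same simulation by induction on reduction length, whose only substantive case is the $(\chi)$-step unwrapping $\chi\underline{\alpha}t_s$ (the detour through a fresh $\sigma$ and Corollary \ref{cor:redlength} is unnecessary but harmless).

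One correction: your claim that $t_s$ only occurs wrapped as $\chi\underline{\alpha}t_s$ fails for arbitrary $v$, since $v$ may itself contain $t_s$. You should track only the occurrences introduced through $x$. The paper does this by substituting a fresh dummy constant $\mathsf{d}$ for $x$ and comparing $p\{\mathsf{d}:=\chi\underline{\alpha}t_s\}$ with $p\{\mathsf{d}:=\chi\underline{\alpha}t_{s\restr\alpha}\}$.
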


\begin{proof}
Let $\gamma<\kappa$ be a limit ordinal, we want to associate to every $\leq \delta$-sequence of terms $s:= (u_\xi)_{\xi<\delta}$ with $\delta< \gamma,$ an accumulator $t_s$ satisfying the properties of Definition \ref{def: canonical system}. For $\delta = 0$ or a successor ordinal, the definition of $t_s$ is already fixed by Definition \ref{def: canonical system}:    
\begin{enumerate}
        \item if $s$ is the empty sequence, then $t_s$ is $\underline{0}$ (any term is an accumulator of the empty sequence)
        \item if $\delta=\alpha+1$, then  $t_s\dfn (\chi)\underline{\alpha} t_{s\restr \alpha} u_\alpha$ 
\end{enumerate}
When $\delta$ is a limit ordinal $<\gamma,$ we let $t_s\dfn \bigwedge_{\alpha<\delta}(t_{s\restr {\alpha+1}}\underline{\alpha}).$ We conclude by showing that in this last case, given $\alpha<\delta$, for each term $v$ and each stack $\pi$, if $v\{x:=\Restr[t_s,\underline{\alpha}]\}\star\pi\in\Perp$ then $v\{x:=\Restr[t_{s\restr\alpha},\underline{\alpha}]\}\star\pi\in\Perp$. 
For each term $v$ and each stack $\pi$, define the process $p_{v,\pi}=v\{x:=\mathsf{d}\}\star\pi$ where $\mathsf{d}$ is a dummy constant (different from $\mathsf{h}$) not appearing in $v, \pi$. We show by induction on $n$ that for all $v,\pi$, if there exists a stack $\rho$ such that $\len(p_{v,\pi}\{\mathsf{d}:=\Restr[t_s,\underline{\alpha}]\}, \mathsf{h}\star\rho)=n$, then $p_{v,\pi}\{\mathsf{d}:=\Restr[t_{s\restr\alpha},\underline{\alpha}]\}\in\Perp$. This entails that for all $v$ and for all $\pi$,  $v\{x:=\Restr[t_s,\underline{\alpha}]\}\star\pi\in\Perp$ implies $v\{x:=\Restr[t_{s\restr\alpha},\underline{\alpha}]\}\star\pi\in\Perp$ (by Remark \ref{rk:def pole by len}); in particular, $v\{x:=\Restr[t_s,\underline{\alpha}]\}\in\falsitydual{\bot}$ implies $v\{x:=\Restr[t_{s\restr\alpha},\underline{\alpha}]\}\in\falsitydual{\bot}$.

Fix $v, \pi$ such that the assumption holds. In the following, we omit the subscript $v,\pi$ in $p_{v,\pi}$, and we denote $R=\Restr[t_s,\underline{\alpha}], R_\alpha=\Restr[t_{s\restr\alpha},\underline{\alpha}]$.  If $n=0$, then $p\{\mathsf{d}:=R\}=v\{x:=R\}\star\pi=\mathsf{h}\star\rho$, which implies $v=\mathsf{h}$ and therefore $p\{\mathsf{d}:=R_\alpha\}=\mathsf{h}\{x:=R_\alpha\}\star\pi=\mathsf{h}\star\pi\in\Perp$. 
For the inductive step, suppose that there exists a process $q$ such that $p\{\mathsf{d}:=R\}\succ q\dsucc \mathsf{h}\star\rho$. We proceed by case analysis on the reduction $p\{\mathsf{d}:=R\}\star\pi\succ q$, in order to rewrite $q$ as $p'\{\mathsf{d}:=R\}$ for some process $p'$ and then apply the inductive hypothesis.

$(grab)$ case. $v\{x:=\mathsf{d}\}=\lambda y.(v'\{x:=\mathsf{d}\})$, $\pi=w\centerdot\pi'$ and $q=v'\{x:=R\}\{y:=w\}\star\pi'$. Consider the process $p'=v'\{x:=\mathsf{d}\}\{y:=w\}\star\pi'$, which satisfies $p\dsucc p'$, $p\{\mathsf{d}:=R_\alpha\}\dsucc p'\{\mathsf{d}:=R_\alpha\}$ and $q=p'\{\mathsf{d}:=R\}$ (by hypothesis on $\mathsf{d}$, the substitutions $\{y:=w\}$ and $\{\mathsf{d}:=R\}$ commute). Therefore by inductive hypothesis $p'\{\mathsf{d}:=R_\alpha\}\in\Perp$ and so does $p\{\mathsf{d}:=R_\alpha\}$.

$(push)$ case. $v\{x:=\mathsf{d}\}=(v_1\{x:=\mathsf{d}\})(v_2\{x:=\mathsf{d}\})$ and $q=v_1\{x:=R\}\star v_2\{x:=R\}\centerdot\pi$. Let $p'=v_1\{x:=\mathsf{d}\}\star v_2\{x:=\mathsf{d}\}\centerdot\pi$, which satisfies as in the above case $p\dsucc p'$, $p\{\mathsf{d}:=R_\alpha\}\dsucc p'\{\mathsf{d}:=R_\alpha\}$ and $q=p'\{\mathsf{d}:=R\}$, so we conclude similarly. 

$(lim)$ case. $v\{x:=\mathsf{d}\}=\left(\bigwedge_{\xi<\alpha'}(v_\xi\{x:=\mathsf{d}\})\right)\underline{\beta}$, $q=v_\beta\{x:=R\}\star\pi$. Define $p'=v_\beta\{x:=\mathsf{d}\}$ and conclude as in the previous cases.

$(save)$ and $(restore)$ cases. Since $v\{x:=\mathsf{d}\}=\cc$ or $v\{x:=\mathsf{d}\}= k_\mathsf{\varsigma}$, the reduction doesn't depend on the substitution $\{\mathsf{d}:=R\}$, hence the cases is trivial. 

$(\chi)$ case. There are two subcases, depending on the shape of $v$.
    \begin{enumerate}
        \item $v\{x:=\mathsf{d}\}=(\mathsf{d}(v'\{x:=\mathsf{d}\}))\underline{\beta}$ for some ordinal $\beta<\kappa$. There are two cases depending on $\beta$.
        \begin{enumerate}
            \item If $\beta<\alpha$, then $q=(t_s)\underline{\beta}\star\pi$, and $p\{\mathsf{d}:=R_\alpha\}\dsucc (t_{s\restr\alpha})\underline{\beta}\star\pi$. By definition, $(t_s)\underline{\beta}\star\pi$ and $(t_{s\restr\alpha})\underline{\beta}\star\pi$ reduce to the same process $u_\beta\star\pi$. Using that the first process is in $\Perp$ and that the pole is closed by reduction, we deduce that $u_\beta\star\pi\in\Perp$, which entails the conclusion.
            \item If $\beta\geq\alpha$, define $p'=v'\{x:=\mathsf{d}\}\star\pi$. Observe that both $p\{\mathsf{d}:=R\}\succ p'\{\mathsf{d}:=R\}=q$ and $p\{\mathsf{d}:=R_\alpha\}\succ p'\{\mathsf{d}:=R_\alpha\}$, therefore by induction hypothesis we conclude. 
        \end{enumerate}
        \item $v\{x:=\mathsf{d}\}=((\chi\underline{\alpha'}(v_1\{x:=\mathsf{d}\}))(v_2\{x:=\mathsf{d}\}))\underline{\beta}$ for some $\alpha',\beta<\kappa$. Define $p'$ to be the process $(v_1\{x:=\mathsf{d}\})\underline{\beta}\star\pi$ if $\beta<\alpha'$, otherwise $p'=v_2\{x:=\mathsf{d}\}\star\pi$. In both cases, $p\{\mathsf{d}:=R\}\dsucc p'\{\mathsf{d}:=R\}=q$ and $p\{\mathsf{d}:=R_\alpha\}\dsucc p'\{\mathsf{d}:=R_\alpha\}$, therefore we conclude by inductive hypothesis.\qedhere
    \end{enumerate}
\end{proof}

\begin{comment} 
Let us write $u\sim v$ when for every $\pi\in \Pi,$ we have $u\star \pi\in \Perp\iff v\star \pi\in \Perp.$\\        
{\bf Claim:} for every $\beta<\alpha\leq \delta,$ $t_{s\restr \alpha}\underline{\beta}\sim u_\beta.$\\

\emph{Proof of claim.} We prove the claim by induction on $\alpha.$ 
For the successor case, we have 
$t_{s\restr{\alpha+1}}\underline{\beta}=\chi \underline{\alpha}t_{s\restr \alpha}u_\alpha \underline{\beta}\sim t_{s\restr \alpha}\underline{\beta}$ which by inductive hypothesis is $\sim u_\beta.$
When $\alpha$ is limit, we have $t_{s\restr \alpha}\underline{\beta}= (\bigwedge (t_{s\restr {\xi+1}}\underline{\xi}))\underline{\beta}\sim t_{s\restr \beta+1}\underline{\beta}$ which by inductive hypothesis is $\sim u_\beta.$ \emph{End proof of claim.}
        
It follow from the claim that for every $\beta<\alpha,$ we have $\Restr[\underline{\beta}, t_{s\restr \beta}]\sim \Restr[\underline{\beta}, t_{s\restr \alpha}],$ in particular for $\alpha=\delta$ limit. From this, it follows that item 3. of Definition \ref{def: canonical system} is satisfied. \todo{SCRIVERE MEGLIO LA PROVA}
\end{comment}

We want to show that the exhibited algebra is a genuine example of non trivial $\kappa$-\closed realizability algebra, as it can be realized that $\fullname{2}$ contains more than two non-extensional elements. 

\begin{lemma}\label{lmm:subconstant}
    In $\mathcal{K}$, for all $p,q\in\Lambda_\mathsf{c}\star\Pi$, $a\in\{\mathsf{h}\}\cup\{\mathsf{d}_\xi\mid\xi<\kappa\}$, $t\in\Lambda_\mathsf{c}^{\textrm{closed}}$, if $p\dsucc q$, then $\len(p,q)=\len(p\{a:=t\}, q\{a:=t\})$.
\end{lemma}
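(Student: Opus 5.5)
We argue exactly as in Lemma \ref{lmm:substack}, by induction on $\len(p,q)$, now substituting a constant instead of a stack bottom. If $\len(p,q)=0$ then $p=q$, hence $p\{a:=t\}=q\{a:=t\}$ and both lengths are $0$. If $\len(p,q)=n+1$, pick $p'$ with $p\succ p'$ and $\len(p',q)=n$. By the induction hypothesis $\len(p'\{a:=t\},q\{a:=t\})=n$. It then suffices to show that $p\{a:=t\}\succ p'\{a:=t\}$ in one step. Because the machine is deterministic, the length along this chain is the length of the substituted reduction.

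The one-step claim is a case analysis on the rule applied in $p\succ p'$.

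For \emph{grab}, we use the commutation $u'\{y:=u\}\{a:=t\}=u'\{a:=t\}\{y:=u\{a:=t\}\}$. This holds by induction on $u'$ because $t$ is closed, so no variable capture occurs.

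For \emph{push}, \emph{save} and \emph{restore}, the substitution passes through the constructors directly. For instance $k_\varsigma\{a:=t\}=k_{\varsigma\{a:=t\}}$, and \emph{restore} then yields $u\{a:=t\}\star\varsigma\{a:=t\}$.

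For \emph{lim}, substitution is defined componentwise on oracles. We also need that ordinal terms are fixed by the substitution: $\underline{\beta}\{a:=t\}=\underline{\beta}$. This follows by induction on $\beta$, since $\underline{\beta}$ is built only from $I$, $\chi$ and oracles of $I$, and contains no $\mathsf{h}$ and no $\mathsf{d}_\xi$. Hence $(\bigwedge_{\xi}u_\xi\{a:=t\})\underline{\beta}\star\pi\{a:=t\}\succ u_\beta\{a:=t\}\star\pi\{a:=t\}$.

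The $\chi$ case works the same way: the ordinal arguments are untouched, so the same branch of the trichotomy is selected.

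The main obstacle is the treatment of $a$ itself. A process $p$ with $a$ in head position, say $\mathsf{h}\star\pi$ or $\mathsf{d}_\xi\star\pi$, has no reduct other than itself, by the rules for $\mathsf{h}$ and $\mathsf{d}_\xi$. So such a $p$ never occurs as the source of a genuine step $p\succ p'$ with $p'\neq p$. Two points need care here. First, we must read $\len$ as counting only genuine steps, discarding the trivial reflexive ones. Second, we must check that no rule ever inspects $a$, so a redex in $p$ is never created or destroyed by replacing $a$ with $t$. This check amounts to noting that the left-hand side of each rule mentions only $\lambda$, application, $\cc$, $k_\pi$, $\chi$, oracles and ordinal terms, none of which is $\mathsf{h}$ or a $\mathsf{d}_\xi$.

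One caveat remains. If $q$ itself has $a$ in head position, then $q\{a:=t\}$ may reduce further. This does not affect $\len(p\{a:=t\},q\{a:=t\})$, because the chain we constructed already reaches $q\{a:=t\}$ in exactly $n+1$ steps, which closes the induction.
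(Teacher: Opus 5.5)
Your strategy matches what the paper has in mind. The paper omits the proof as ``similar to Lemma~\ref{lmm:substack}'': induction on $\len(p,q)$ plus a one-step simulation $p\succ p'\Rightarrow p\{a:=t\}\succ p'\{a:=t\}$ by cases on the rule. Your checks that $t$ is closed and that $\underline{\beta}\{a:=t\}=\underline{\beta}$ are the right ones. There is, however, a genuine gap in the last step: ``because the machine is deterministic, the length along this chain is the length of the substituted reduction''.

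By the first clause of its definition, $\len(r,s)=0$ whenever $r=s$, so $\len$ measures the \emph{first} arrival at the target. From $p\{a:=t\}\succ p'\{a:=t\}$ and $\len(p'\{a:=t\},q\{a:=t\})=n$ you obtain $n+1$ only if $p\{a:=t\}\neq q\{a:=t\}$, and this can fail. Take $a=\mathsf{d}_0$, $t=\lambda x.(x)x$, $\sigma\in\Pi_0$, $p=\lambda x.(x)x\star\mathsf{d}_0\centerdot\sigma$ and $q=\mathsf{d}_0\star\mathsf{d}_0\centerdot\sigma$. Then $p\succ(\mathsf{d}_0)\mathsf{d}_0\star\sigma\succ q$, so $\len(p,q)=2$. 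But $p\{a:=t\}=\lambda x.(x)x\star\lambda x.(x)x\centerdot\sigma=q\{a:=t\}$, so $\len(p\{a:=t\},q\{a:=t\})=0$. This is exactly the situation your final caveat dismisses. Once $q\{a:=t\}$ can still reduce, it may lie on a cycle through $p\{a:=t\}$ (or through an intermediate process of the substituted chain), and then the chain of length $n+1$ is not the shortest route. So the statement as written is false. The paper's sketch has the same hole: its proof of Lemma~\ref{lmm:substack} likewise never checks $p\{\sigma:=\pi\}\neq q\{\sigma:=\pi\}$.

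What your induction does establish is that $p\{a:=t\}$ reaches $q\{a:=t\}$ in exactly $\len(p,q)$ steps, hence $\len(p\{a:=t\},q\{a:=t\})\le\len(p,q)$. Equality holds when $q\{a:=t\}$ cannot recur earlier on that chain, for instance when $q\{a:=t\}$ is irreducible (e.g.\ $q=\mathsf{h}\star\rho$ with $a\neq\mathsf{h}$). Indeed, if $r_0\succ\cdots\succ r_N$ is the substituted chain and $r_i=r_N$ with $i<N$, then $r_N=r_i\succ r_{i+1}$ by a genuine rule, which is impossible. More generally it suffices that $q\{a:=t\}$ lies on no cycle.

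For a single step, the case used in Lemma~\ref{lmm:transparence}, equality holds outright, because $p\{a:=t\}\neq p'\{a:=t\}$ for every rule. Grab and push change the stack length, which substituting a constant preserves; in the remaining rules equality would force $\cc=k_\rho$ or a term or stack to occur properly inside itself. You should either add such a hypothesis or prove the weakened conclusion instead of the lemma as stated.
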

\begin{proof}
    The proof is similar to the one of Lemma \ref{lmm:substack}, we omit it.
\end{proof}

The following is a analogue of the usual substitution lemma. It states the substitution of any two constants of the form $\mathsf{h}, \mathsf{d}_\xi$ commutes with the reduction.

\begin{lemma}\label{lmm:antisub}
    Let $a,b\in \{\mathsf{h}\}\cup\{\mathsf{d}_\xi\mid\xi<\kappa\}, a\neq b$ and suppose $p\{a:=b\}\succ q$. Then there exists $p'$ such that $p\succ p'$ and $p'\{a:=b\}=q$.
\end{lemma}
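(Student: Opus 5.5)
The plan is a case analysis on the single reduction step $p\{a:=b\}\succ q$, following the shape of the proof of Lemma \ref{lmm:substack}. Write $p=t\star\pi$, so $p\{a:=b\}=t\{a:=b\}\star\pi\{a:=b\}$. The key observation concerns the head of $t$. If $t$ is a variable-free application, an abstraction, $\cc$, a continuation $k_\varsigma$, $\chi$ or an oracle $\bigwedge_{\xi<\lambda}u_\xi$, then $t\{a:=b\}$ has the same head constructor. The substitution only renames occurrences of the constant $a$. The only way the head can change kind is when $t=a$ itself, so that $t\{a:=b\}=b$.

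First I dispose of that case. If $t=a$, then $p\{a:=b\}=b\star\pi\{a:=b\}$. Since $b\in\{\mathsf{h}\}\cup\{\mathsf{d}_\xi\}$, by the extra reduction rules $b\star\rho$ only reduces to itself. So no step $p\{a:=b\}\succ q$ with a genuine successor exists, and the case is vacuous. I read $\succ$ as a proper step and $\dsucc$ as reflexive, so that $b\star\rho\dsucc p$ iff $p=b\star\rho$. If the reflexive self-loop is counted, take $p'=p$ and note that $p$ reduces trivially. Symmetrically, if $t=a'$ for another of these constants, then $p\{a:=b\}$ also has a blocked head, which is again vacuous.

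Otherwise I go rule by rule, defining $p'$ as the unique one-step reduct of $p$ and checking $p'\{a:=b\}=q$.
\begin{itemize}
\item \emph{push}: $t=t_1t_2$ and $p'=t_1\star t_2\centerdot\pi$.
\item \emph{grab}: $t=\lambda x.t'$ and $\pi=u\centerdot\pi'$, with $p'=t'\{x:=u\}\star\pi'$. Here I need the commutation $t'\{x:=u\}\{a:=b\}=t'\{a:=b\}\{x:=u\{a:=b\}\}$, proved by routine induction on $t'$ as in Lemma \ref{lmm:substack}.
\item \emph{save}: $p'=u\star k_{\pi'}\centerdot\pi'$, using $k_{\pi'}\{a:=b\}=k_{\pi'\{a:=b\}}$.
\item \emph{restore}: $p'=u\star\varsigma$.
\item \emph{lim}: $t=\bigwedge_{\xi<\lambda}u_\xi$ and $\pi=\underline{\beta}\centerdot\pi'$, with $p'=u_\beta\star\pi'$.
\end{itemize}

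The delicate cases are $\chi$ and \emph{lim}, because their firing depends on an argument being literally an ordinal term $\underline{\beta}$. The lim case also depends on $\underline{\alpha}$ in the $\chi$ case. So I need the following fact: if $s\{a:=b\}=\underline{\beta}$, then $s=\underline{\beta}$ and $\underline{\beta}\{a:=b\}=\underline{\beta}$. This holds because the ordinal terms are built only from $I$, $\chi$ and oracles of $I$, and contain neither $\mathsf{h}$ nor any $\mathsf{d}_\xi$. A straightforward induction on $\beta$, following the inductive definition of $\underline{\beta}$, shows that any preimage of $\underline{\beta}$ under $\{a:=b\}$ is $\underline{\beta}$ itself. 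A subterm $a$ in the preimage would become $b$, which does not occur in $\underline{\beta}$.

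With that fact in hand, the $\chi$ case goes as follows. We have $t=\chi$ and $\pi=s_1\centerdot f\centerdot g\centerdot s_2\centerdot\pi'$ with $s_1\{a:=b\}=\underline{\alpha}$ and $s_2\{a:=b\}=\underline{\beta}$. Hence $s_1=\underline{\alpha}$ and $s_2=\underline{\beta}$, so $p$ reduces by the same branch, and $p'\{a:=b\}=q$.

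I expect this "ordinal terms are rigid under substitution" step to be the main obstacle, mainly in stating it cleanly for oracles, where injectivity of the substitution on $\bigwedge$-terms must be used componentwise. Determinism of the reduction then guarantees that $p'$ is well defined in each case.
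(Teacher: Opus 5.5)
Your proposal is correct and follows essentially the same route as the paper. Both argue by case analysis on the rule firing in $p\{a:=b\}\succ q$ (grab, push, save, restore, $\chi$, lim), reconstruct a preimage $p'$ in each case, and use the commutation of $\{x:=u\}$ with $\{a:=b\}$ in the grab case. You are somewhat more careful than the paper on two points. First, the paper only notes that $\underline{\xi}\{a:=b\}=\underline{\xi}$ and tacitly uses the converse in the $\chi$ and lim cases, namely that any preimage of $\underline{\xi}$ is $\underline{\xi}$ itself; this holds because ordinal terms contain neither $\mathsf{h}$ nor any $\mathsf{d}_\xi$, which is exactly your rigidity fact. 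Second, the paper never treats explicitly the vacuous case where the head of $p$ is $a$, which you handle.
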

\begin{proof}
    The proof is made by case analysis on the reduction $p\{a:=b\}\succ q$. Let $p=t\star\pi$. 

    $(grab)$ case. $t\{a:=b\}=\lambda x.t'$, $\pi\{a:=b\}=u\centerdot\pi'$ and $q=t'\{x:=u\}\star\pi'$. By definition of substitution, we must have $t=\lambda x.t''$, $\pi=u'\centerdot\pi''$ and $t'= t''\{a:=b\}$, $u= u'\{a:=b\}, \pi'=\pi''\{a:=b\}$. Define $p'=t''\{x:=u'\}\star\pi''$. Evidently $p\succ p'$, and 
    \begin{align*}
        p'\{a:=b\} &= t''\{x:=u'\}\{a:=b\}\star\pi''\{a:=b\}\\
        &=t''\{a:=b\}\{x:=u'\{a:=b\}\}\star\pi''\{a:=b\}\\
        &=t'\{x:=u\}\star\pi' = q.
    \end{align*}

    $(push)$ case. $t\{a:=b\}=(t_1)t_2, \pi\{a:=b\}=\pi', q=t_1\star t_2\centerdot\pi$. There must exist $t_1', t_2'$ such that $t_1=t_1'\{a:=b\}, t_2=t_2'\{a:=b\}$ and $t=(t_1'')t_2''$. Then, $p'=t_1'\star t_2'\centerdot\pi$.

    $(save)$ case. $t\{a:=b\}=\cc, \pi\{a:=b\}=u\centerdot\pi'$ and $q=u\star k_\pi'\centerdot\pi'$. Then, $t=\cc$ and there exist $u',\pi''$ such that $u=u'\{a:=b\}, \pi'=\pi''\{a:=b\}$. Set $p'=u'\star k_{\pi''}\centerdot\pi''$, which satisfies $p\succ p'$ and $p'\{a:=b\}=q$.

    $(restore)$ case. $t\{a:=b\} = k_\rho, \pi\{a:=b\} = u\centerdot\pi', q=u\star\rho$. Then, there exist $u',\rho',\pi''$ such that $u=u'\{a:=b\}, \pi'=\pi''\{a:=b\}, \rho=\rho'\{a:=b\}$. Therefore, $t=k_{\rho'}, \pi=u'\centerdot \pi''$ and $p'=u'\star\rho'$ verifies the sought conditions.

    $(\chi)$ case. $t\{a:=b\}=\chi\underline{\alpha}\,fg\underline{\beta}$ for some $\alpha,\beta<\kappa$, $\pi\{a:=b\}=\pi'$, and $q$ is either equal to $f\underline{\alpha}\star\pi'$ or $g\star\pi'$. Observe that for all $a,b$ as in the hypothesis, $\chi\{a:=b\}=\chi$ and $\underline{\xi}\{a:=b\}=\underline{\xi}$ for all $\xi<\kappa$. Therefore, $t=\chi\underline{\alpha}\,f'g'\underline{\beta}$ for some 
    terms $f',g'$ such that $f'\{a:=b\}=f, g'\{a:=b\}=g$. Thus, $p=\chi\underline{\alpha}\,f'g'\underline{\beta}\star\pi$, which reduces to $p'$ such that $p'\{a:=b\}=q.$
    
    $(lim)$ case. $t\{a:=b\}=(\bigwedge_{\xi<\gamma} u_\xi)t'$ for $t'=\underline{\alpha}$, $\pi\{a:=b\}=\pi'$, and $q=u_\beta\star\pi'$. There must exists a sequence $(u'_\xi)_{\xi<\gamma}$ such that for all $\xi<\gamma$, $u'_\xi\{a:=b\} = u_\xi$. Then, $p=(\bigwedge_{\xi<\gamma} u'_\xi)t'\star\pi$ reduces to $p'=u'_\beta\star\pi$, and $p'\{a:=b\}=q$.
\end{proof}

From the previous lemma it follows that, given a process $p$ and a dummy constant $a$ such that $p\{a:=\mathsf{h}\}\dsucc \mathsf{h}\star\rho$ for some stack $\rho$, then there exists $p'$ such that $p\dsucc p'$ and $p'\{a:=\mathsf{h}\}=\mathsf{h}\star\rho$. Observe that a priori the substitution $\{a:=\mathsf{h}\}$ may not be relevant for reducing in a process of the form $\mathsf{h}\star\rho$: take for instance $p=\lambda x.x\star\mathsf{h}\centerdot a\centerdot\pi$, for some stack $\pi$. In this case, we have that $p\{a:=u\}\in\Perp$ \emph{for all $u\in\Lambda_\mathsf{c}^{\textrm{closed}}$}. Next result determines when this is the case. 

\begin{lemma}\label{lmm:transparence}
    Let $a,b\in \{\mathsf{h}\}\cup\{\mathsf{d}_\xi\mid\xi<\kappa\}, a\neq b$ and suppose $p\{a:=b\}\dsucc b\star\rho$ for some $\rho$. Then for all $u\in\Lambda_\mathsf{c}^{\textrm{closed}}$, $u\neq b$, there exists a stack $\varsigma$ such that one of the following hold:
    \begin{itemize}
        \item $\len(p\{a:=u\}, b\star\varsigma)=\len(p\{a:=b\}, b\star\rho)$, and for any other $u'\in\Lambda_\mathsf{c}^{\textrm{closed}}$, there exists $\varsigma'$ such that $\len(p\{a:=u'\}, b\star\varsigma')=\len(p\{a:=b\}, b\star\rho)$; or
        \item $\len(p\{a:=u\}, u\star\rho')=\len(p\{a:=b\}, u\star\rho)$ and for any other $u'\in\Lambda_\mathsf{c}^{\textrm{closed}}$, there exists $\varsigma'$ such that $\len(p\{a:=u'\}, u'\star\varsigma')=\len(p\{a:=b\}, b\star\rho)$
    \end{itemize}
\end{lemma}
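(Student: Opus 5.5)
Fix a reduction $p\{a:=b\}=q_0\succ q_1\succ\dots\succ q_n=b\star\rho$ with $n=\len(p\{a:=b\},b\star\rho)$. The plan is to lift it back to $p$ by repeatedly applying Lemma~\ref{lmm:antisub}. This yields processes $p=p_0\succ p_1\succ\dots\succ p_n$ with $p_i\{a:=b\}=q_i$. Since the reduction is deterministic, this lifted chain is the unique reduction of $p$ of length $n$. The last process $p_n$ satisfies $p_n\{a:=b\}=b\star\rho$. By the definition of substitution, there are then exactly two possibilities for the head of $p_n$: either $p_n=b\star\rho_0$ with $\rho_0\{a:=b\}=\rho$ (the constant $b$ already occurred at head position in $p$'s reduct), or $p_n=a\star\rho_0$ with $\rho_0\{a:=b\}=\rho$ (the head $b$ was produced by the substitution). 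Here we use $a\neq b$ and the fact that $b$ is a constant, so that no other term becomes $b$ under $\{a:=b\}$. This case split gives the two alternatives of the statement.

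Next, for an arbitrary closed term $u$, we transport the chain $p_0\succ\dots\succ p_n$ through the substitution $\{a:=u\}$ using Lemma~\ref{lmm:subconstant}. This gives $\len(p\{a:=u\},p_n\{a:=u\})=n$, and the same holds for any other $u'$. In the first case, $p_n\{a:=u\}=b\star\rho_0\{a:=u\}$. We set $\varsigma=\rho_0\{a:=u\}$, and similarly $\varsigma'=\rho_0\{a:=u'\}$, which gives the first bullet. In the second case, $p_n\{a:=u\}=u\star\rho_0\{a:=u\}$, which gives the second bullet with $\rho'=\rho_0\{a:=u\}$ and, for $u'$, $\varsigma'=\rho_0\{a:=u'\}$. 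We read the target $u\star\rho$ in the statement as $b\star\rho$, since the lengths coincide.

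The main obstacle is justifying Lemma~\ref{lmm:subconstant} for arbitrary closed $u$ rather than only constants. A careless substitution could break a $(\chi)$ or $(lim)$ redex if $a$ occurred inside an ordinal term $\underline{\beta}$, or it could alter a head-position step. I would handle this by noting three facts: ordinal terms contain no $\mathsf{h}$ or $\mathsf{d}_\xi$, so $\underline{\beta}\{a:=u\}=\underline{\beta}$; every rule is schematic in its non-head arguments; and along the chain $a$ is never in head position before step $n$. The last point holds because a process with head $a$ cannot reduce, so no step of the chain fires on $a$. Consequently each step of the chain commutes with $\{a:=u\}$. The only delicate point is the condition $u\neq b$, which is needed only to make the two cases disjoint when stating the conclusion.
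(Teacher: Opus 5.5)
Your proof is correct and is essentially the paper's argument with the induction on $\len(p\{a:=b\},b\star\rho)$ unrolled. You lift the whole chain with Lemma~\ref{lmm:antisub} and split on whether the final head is $b$ or $a$; the paper lifts one step at a time, makes the same split ($t=b$ or $t=a$) in its base case, and transports each step through Lemma~\ref{lmm:subconstant}, as you do. Your worry about extending Lemma~\ref{lmm:subconstant} to non-constant $u$ is unnecessary, since the paper already states that lemma for arbitrary closed $t$, but your sketch of why it holds is sound.
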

\begin{proof}
    Let $p=t\star\pi$ such that $p\{a:=b\}=t\{a:=b\}\star(\pi\{a:=b\})\dsucc b\star\rho$. We show it by induction on $\len(p\{a:=b\}, u\star\rho)$. If $\len(p\{a:=b\}, b\star\rho)=0$, then $t\{a:=b\}=b, \pi\{a:=b\}=\rho$. We distinguish two cases.
    \begin{itemize}
        \item If $t=b$, then $t\{a:=u\}=b$ and for $\varsigma=\pi\{a:=u\}$ we have $p\{a:=u\}= b\star\varsigma$. If we consider another $u'$, then for $\varsigma'=\pi\{a:=u'\}, p\{a:=u'\}=b\star\varsigma'$.
        \item If $t=a$, then $t\{a:=u\}=u$ and for $\varsigma=\pi\{a:=u\}$ we have $p\{a:=u\}= u\star\varsigma$. For any other $u'$, $p\{a:=u'\}=u'\star\varsigma'$ for $\varsigma'=\pi\{a:=u'\}$.
    \end{itemize}
    Suppose that $\len(p\{a:=b\}, u\star\rho)=n+1$. Then, by Lemma \ref{lmm:antisub} there exists $p'$ such that $p\succ p'$ and $\len(p'\{a:=b\}, b\star\rho)=n$. By inductive hypothesis, we have two cases:
    \begin{itemize}
        \item for some $\varsigma$, $\len(p'\{a:=u\}, b\star\varsigma)=n$ and for all $u'$ there exists $\varsigma'$ such that $\len(p'\{a:=u'\}, b\star\varsigma')=n$. By Lemma \ref{lmm:subconstant}, $\len(p\{a:=u\}, p'\{a:=u\})=\len(p,p')=1$, therefore $\len(p\{a:=u\}, b\star\varsigma)=n$. The same holds for $u'$.
        \item for some $\varsigma$, $\len(p'\{a:=u\}, u\star\varsigma)=n$ and for all $u'$ there exists $\varsigma'$ such that $\len(p'\{a:=u'\}, u'\star\varsigma')=n$. By Lemma \ref{lmm:subconstant}, $\len(p\{a:=u\}, p'\{a:=u\})=\len(p,p')=1$, therefore $\len(p\{a:=u\}, u\star\varsigma)=n$, and similarly for $u'$. \qedhere
    \end{itemize}
\end{proof}

\begin{lemma}
    If $t\in\falsitydual{\bot,\top\rightarrow\bot}\cap\falsitydual{\top,\bot\rightarrow\bot}$, then $t\in\falsitydual{\top,\top\rightarrow\bot}$.
\end{lemma}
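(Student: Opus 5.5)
The plan is to test $t$ on fresh constants and use the substitution lemmas for $\mathsf{h}$ and the dummies $\mathsf{d}_\xi$. First unfold the falsity values. Since $\falsity{\top}=\emptyset$, we have $\falsitydual{\top}=\Lambda_\mathsf{c}$. Hence $\falsity{\top,\top\rightarrow\bot}=\{u\centerdot v\centerdot\pi\mid u,v\in\Lambda_\mathsf{c},\pi\in\Pi\}$, and the goal is that $t\star u\centerdot v\centerdot\pi\in\Perp$ for all $u,v,\pi$. The hypotheses say: $t\star u\centerdot v\centerdot\pi\in\Perp$ whenever $u\in\falsitydual{\bot}$ (any $v$), and whenever $v\in\falsitydual{\bot}$ (any $u$). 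The basic witness of $\falsitydual{\bot}$ is $\mathsf{h}$, since $\mathsf{h}\star\pi\dsucc\mathsf{h}\star\pi$.

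Fix two distinct dummy constants $a=\mathsf{d}_\xi$, $b=\mathsf{d}_\eta$ not occurring in $t$. Fix also a stack bottom $\sigma\notin\substacks{t}$. This is possible because $t$ mentions fewer than $\kappa$ constants and stack bottoms. Set $p\dfn t\star a\centerdot b\centerdot\sigma$. By the first hypothesis $p\{a:=\mathsf{h}\}=t\star\mathsf{h}\centerdot b\centerdot\sigma\in\Perp$, so $p\{a:=\mathsf{h}\}\dsucc\mathsf{h}\star\rho$ for some $\rho$. Apply Lemma \ref{lmm:transparence} with the pair $a,\mathsf{h}$. Exactly one of two cases occurs.

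\emph{Case 1.} For every closed $u$ there is $\varsigma$ with $p\{a:=u\}\dsucc\mathsf{h}\star\varsigma$, i.e. $t\star u\centerdot b\centerdot\sigma\in\Perp$. Then I substitute $b:=v$ and $\sigma:=\pi$. Lemma \ref{lmm:subconstant} preserves the reduction to $\mathsf{h}\star\varsigma\{b:=v\}$, since $b\neq\mathsf{h}$ and $b$ does not occur in $t$. Lemma \ref{lmm:substack} (or Corollary \ref{cor:redlength}) does the same for $\sigma$. This gives $t\star u\centerdot v\centerdot\pi\in\Perp$, as required.

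\emph{Case 2.} For every $u$, $p\{a:=u\}\dsucc u\star\varsigma_u$; that is, $t$ hands control to its first argument. I aim for a contradiction using the second hypothesis. Take $u\dfn c$, a third dummy constant distinct from $a,b,\mathsf{h}$ and absent from $t$. Then $t\star c\centerdot b\centerdot\sigma\dsucc c\star\varsigma$. Applying $\{b:=\mathsf{h}\}$ via Lemma \ref{lmm:subconstant} gives
\[t\star c\centerdot\mathsf{h}\centerdot\sigma\dsucc c\star\varsigma\{b:=\mathsf{h}\}.\]
By the second hypothesis the left-hand side is in $\Perp$, so it reduces to some $\mathsf{h}\star\rho'$. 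The reduction of $\mathcal{K}$ is deterministic, and $c\star\varsigma'$ only reduces to itself. Hence $c\star\varsigma\{b:=\mathsf{h}\}$ lies on the unique reduction path, and so does $\mathsf{h}\star\rho'$. Since neither of these processes reduces to anything but itself, the two must coincide, forcing $c=\mathsf{h}$, which is absurd. So only Case 1 occurs.

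The main obstacle is justifying this last step carefully. It needs the determinism of $\succ$ in this algebra, the remark that the pole is closed by reduction, and the fact that the dummies and $\mathsf{h}$ are inert. It also requires that Lemma \ref{lmm:transparence} genuinely gives the uniform dichotomy ``all $u$ halt on $\mathsf{h}$'' versus ``all $u$ reach head position''. Finally, one must check that the fresh constants can always be chosen; the $\kappa$ many dummies and the size bounds on terms guarantee this.
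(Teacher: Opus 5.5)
Your proof is correct, and it is organized differently from the paper's.

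\textbf{The paper's route.} The paper argues by contradiction from a fixed $q=t\star u\centerdot v\centerdot\pi\notin\Perp$ and uses the two hypotheses symmetrically on $p=t\star d\centerdot e\centerdot\pi$. It proceeds in four steps:
\begin{enumerate}
\item It shows that the halting reductions of $p\{d:=\mathsf{h}\}$ and $p\{e:=\mathsf{h}\}$ have the same length $n$, by comparing both on the common instance $t\star\mathsf{h}\centerdot\mathsf{h}\centerdot\pi$.
\item It applies Lemma \ref{lmm:transparence} twice.
\item It uses $q\notin\Perp$ to exclude the ``halts on its own'' branch in both applications.
\item It derives a contradiction by substituting $\lambda xy.x$ for $d$ and $\lambda xy.y$ for $e$, which would both have to be in head position after $n$ steps.
\end{enumerate}

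\textbf{Your route.} You apply Lemma \ref{lmm:transparence} only once, to the first hypothesis, and argue directly. The ``halts on its own'' branch gives the conclusion outright via Lemmas \ref{lmm:subconstant} and \ref{lmm:substack}. The ``first argument reaches head position'' branch is refuted by a single instance $t\star c\centerdot\mathsf{h}\centerdot\sigma$ of the second hypothesis. That refutation uses only determinism and the fact that $c$ and $\mathsf{h}$ are inert, so a deterministic path cannot end at two distinct terminal processes.

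\textbf{What each buys.} Your version dispenses with the length-matching claim $n_1=n_2$ and with the $\lambda xy.x$/$\lambda xy.y$ trick. It also shows that the second hypothesis is only needed with one fresh dummy in the first slot. The paper's version gains symmetry between the two hypotheses, but nothing essential.

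\textbf{A bookkeeping point.} In Case 1 you substitute $b:=v$ and then $\sigma:=\pi$ after $a:=u$. So $b$ and $\sigma$ must not occur in $u$, and $\sigma$ must not occur in $v$. Otherwise the resulting process is not $t\star u\centerdot v\centerdot\pi$. The fix is to fix the target $u,v,\pi$ first and then choose $a,b,c$ (and $\sigma$, if you keep it) fresh for $t,u,v,\pi$, as the paper does. With that choice, $\sigma$ is unnecessary and you can work with $\pi$ directly.
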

\begin{proof}
Suppose by contradiction that for some $u, v, \pi$ the process $q=t\star u\centerdot v\centerdot\pi\not\in\Perp$; notice that $u$ and $v$ are different from $\mathsf{h}$. Consider $d,e\in\{\mathsf{d}_\xi\mid \xi<\kappa\}$ such that $d\neq e, d,e\not\in\subterm{t}\cup\subterm{u}\cup\subterm{v}\cup\subterm{\pi}$. Let $p=t\star d\centerdot e\centerdot\pi$, $p_1=p\{d:=\mathsf{h}\}, p_2=p\{e:=\mathsf{h}\}$. By hypothesis on $t$, there exist two stacks $\rho_1, \rho_2$ and two integers $n_1, n_2$ such that 
\[\len(p_1,\mathsf{h}\star\rho_1)=n_1,\quad \len(p_2, \mathsf{h}\star\rho_2)=n_2.\]
Then, by Lemma \ref{lmm:subconstant}, for all terms $u,v$,
\[\len(p_1\{e:=v\},\mathsf{h}\star\rho_1\{e:=v\})=n_1,\quad \len(p_2\{d:=e\}, \mathsf{h}\star\rho_2\{d:=e\})=n_2.\]
We claim that $n_1=n_2$. Take $u=v=\mathsf{h}$ in previous equations. Since $p_1\{e:=\mathsf{h}\}=p_2\{d:=\mathsf{h}\}$ and the reduction is deterministic, we have that $\len(\mathsf{h}\star\rho_1\{e:=v\}, \mathsf{h}\star\rho_2\{d:=e\})=n_2-n_1$. Since $\mathsf{h}\star\rho_1\{e:=v\}=\mathsf{h}\star\rho_2\{d:=e\}$ because $\mathsf{h}$ has no reduction rules, $n_2-n_1=0$. Let $n=n_1=n_2$. Define now $p_1'=p\{d:=u\}$ and $p_2'=p\{e:=v\}$. By Lemma \ref{lmm:transparence}, there exists $p_1''$ such that $\len(p_1',p_1'')=n$ and one of the two conditions of the lemma apply. The same holds for $p_2'$. Observe now that $q=p_1'\{e:=v\}=p_2'\{d:=v\}$ and again by Lemma \ref{lmm:subconstant}
\[\len(q, p_1''\{e:=v\})=n=\len(q, p_2''\{d:=u\}),\]
which entails that $p_1''\{e:=v\}=p_2''\{d:=u\}$. The processes $p_1''$ and $p_2''$ cannot present $\mathsf{h}$ in head position, otherwise $p_1''\{e:=v\}$ and $p_2''\{d:=u\}$ would present it too ($e$ and $d$ are different from $\mathsf{h}$) and $q$ would be in $\Perp$. Therefore, 
$p''_1=u\star\varsigma_1, p''_2=v\star\varsigma_2$ for some stacks $\varsigma_1, \varsigma_2$. Following Lemma \ref{lmm:transparence}, this implies that, for all $u', v'$ there exist $\varsigma_1', \varsigma_2'$ such that $\len(p\{d:=u'\}, u'\star\varsigma_1')=n, \len(p\{e:=v'\}, v'\star\varsigma_2')=n$. Let $\varsigma_K,\varsigma_F$ be the stacks such that $\len(p\{d:=\lambda xy.x\}, \lambda xy.x\star\varsigma_K)=n, \len(p\{e:=\lambda xy.y\}, \lambda xy.y\star\varsigma_F)=n$. Set $q'=p\{d:=\lambda xy.x\}\{e:=\lambda xy.y\}=p\{e:=\lambda xy.y\}\{d:=\lambda xy.x\}$. Then we have
\[\len(q', \lambda xy.x\star\varsigma_K\{e:=\lambda xy.y\})=n=\len(q', \lambda xy.y\star\varsigma_F\{d:=\lambda xy.x\}),\]
therefore $\lambda xy.x\star\varsigma_K\{e:=\lambda xy.y\})=\lambda xy.y\star\varsigma_F\{d:=\lambda xy.x\})$, which is a contradiction.
\end{proof}

\begin{proposition}
    In the realizability model generated by $\mathcal{K}$, the ordinal $\fullname{2}$ has more than two non-extensional elements.
\end{proposition}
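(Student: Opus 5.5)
The plan is to reduce the claim to the preceding lemma, using the standard characterization of when $\fullname{2}$ behaves like a two-element set. In $\mathrm{ZF}_{\rlzin}$, $\fullname{2}$ has exactly two non-extensional elements (namely $\fullname{0}$ and $\fullname{1}$) if and only if the formula
\[\forall x^{\fullname{2}}\forall y^{\fullname{2}}\forall z^{\fullname{2}}\,(x\neq y\rightarrow y\neq z\rightarrow x\neq z\rightarrow\bot)\]
is realized; equivalently (see \cite{DBLP:journals/mscs/Miquel20}), the model is then a forcing model. Here $\neq$ is Leibniz (non-)equality, whose falsity values are $\emptyset$ or $\Pi$ according to whether the names coincide in $\mathcal{M}$. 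Because the restricted quantifier ranges over the full names of $0$ and $1$, each triple $(x,y,z)$ contains two equal entries. Unfolding the falsity values, a realizer of the displayed formula is exactly a term lying in
\[\falsitydual{\bot,\top,\top\rightarrow\bot}\cap\falsitydual{\top,\bot,\top\rightarrow\bot}\cap\falsitydual{\top,\top,\bot\rightarrow\bot}\]
(together with the all-$\bot$ cases, which are implied by these). Here $\falsitydual{\top}$ is the set of all terms and $\falsitydual{\bot}$ is the set of terms $t$ with $t\star\pi\in\Perp$ for every $\pi$.

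The key step is to argue by contradiction: suppose a proof-like term $\theta$ realizes this formula. From $\theta$ I would extract a term $t$ lying in $\falsitydual{\bot,\top\rightarrow\bot}\cap\falsitydual{\top,\bot\rightarrow\bot}$. Concretely, I would fix the middle argument and pass a realizer of $\top$ (any term, say $I$), for instance taking $t:=\lambda uv.\theta\,u\,I\,v$ after choosing the instance in which the remaining two slots carry the two possibly-false premises, and permuting arguments with a pure $\lambda$-term as needed. The preceding lemma then gives $t\in\falsitydual{\top,\top\rightarrow\bot}$. This means $t\star u\centerdot v\centerdot\pi\in\Perp$ for all $u,v,\pi$, so in particular $t\,I\,I\in\falsitydual{\bot}$. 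Since $t$ is built from $\theta$, $I$ and pure $\lambda$-terms, $t\,I\,I$ is proof-like, and this contradicts the consistency of $\qproofs$: proof-like terms contain neither $\mathsf{h}$ nor continuations, so they cannot reach $\mathsf{h}$.

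The main obstacle is the bookkeeping needed to match the formula expressing ``at most two elements'' with exactly the shape $\falsitydual{A,B\rightarrow\bot}$ used in the lemma. I would handle it by choosing the simplest equivalent formulation, namely $\forall x^{\fullname 2}\forall y^{\fullname 2}(x\neq y\rightarrow x\neq \fullname{0}\rightarrow \ldots)$, or more directly by realizing that $\fullname{2}\subseteq\{\fullname 0,\fullname 1\}$ fails. A realizer of the latter formula reduces, after instantiating with the relevant names, to a term in the two-case intersection. Non-extensionality would then follow because, in the model, there is an element of $\fullname 2$ that is Leibniz-distinct from both $\fullname 0$ and $\fullname 1$.
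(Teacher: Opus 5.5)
Your extraction of $t=\lambda uv.\theta\,u\,I\,v$ from a realizer of the three-variable formula is fine, and so is your appeal to the preceding lemma. There is a genuine gap in the last step, however: it proves a weaker statement than the proposition. Assuming $\theta$ proof-like and contradicting $\qproofs\cap\falsitydual{\bot}=\emptyset$ only shows that ``$\fullname{2}$ has at most two elements'' is \emph{not realized}, i.e.\ it does not belong to the realizability theory. That yields \emph{some} model of the theory with a third element. It does not show that every model $\mathcal{N}$ has one, and ``the realizability model'' means an arbitrary model of the theory. In classical realizability, $\not\Vdash\Phi$ does not give $\Vdash\neg\Phi$, since the realizability theory is in general incomplete. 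The proposition, as the paper proves it, is $\Vdash\neg\forall x^{\fullname{2}}(x\neq\fullname{0},x\neq\fullname{1}\rightarrow\bot)$. Your closing sentence (``in the model, there is an element of $\fullname{2}$ Leibniz-distinct from both'') is precisely this existential, and nothing in your argument realizes it.

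The repair is immediate, because the preceding lemma is stated for \emph{every} term $t$, not only proof-like ones. Take an arbitrary $\theta$ in the truth value of your formula. Then $\lambda uv.\theta\,u\,I\,v$ lies in $\falsitydual{\bot,\top\rightarrow\bot}\cap\falsitydual{\top,\bot\rightarrow\bot}$, so $(\lambda uv.\theta\,u\,I\,v)\star I\centerdot I\centerdot\pi\in\Perp$ for all $\pi$. Hence the proof-like term $\lambda w.(\lambda uv.w\,u\,I\,v)II$ realizes the negation, and the consistency of $\qproofs$ plays no role. This is exactly the paper's proof. The paper works directly with the one-variable formula $\forall x^{\fullname{2}}(x\neq\fullname{0},x\neq\fullname{1}\rightarrow\bot)$, whose truth value is already the two-case intersection. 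So $\lambda x.(x)II$ is the realizer, and no bookkeeping with triples is needed.
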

\begin{proof}
    To show that $\Vdash \neg\forall x^{\fullname{2}} (x\neq\fullname{0}, x\neq\fullname{1}\rightarrow \bot)$, consider $t\in\falsitydual{\forall x^{\fullname{2}} (x\neq\fullname{0}, x\neq\fullname{1}\rightarrow \bot)}$, $\pi\in\Pi$. From the previous lemma, $t\in\falsitydual{\bot,\top\rightarrow\bot}\cap\falsitydual{\top,\bot\rightarrow\bot}\subseteq \falsitydual{\top,\top\rightarrow\bot}$, therefore $(t)uv\in\falsitydual{\bot}$ for any $u, v$, in particular for $u=v=I$. Hence, $\lambda x.(x)II$ realizes that $\fullname{2}$ is not trivial.
\end{proof}

\section{Conclusions}

In this paper, we introduced a transfinite form of bar-induction and the corresponding transfinite bar-recursion operator in the setting of classical realizability. Combined with the notion of $\kappa$-\closure, this construction allowed us to obtain realizability models validating fragments of the Axiom of Choice indexed below $\hat{\kappa}$ while preserving cardinals up to $\hat{\kappa}$. These results provide further evidence that bar-recursion remains closely connected to choice principles beyond the countable setting and suggest a fruitful interaction between classical realizability, transfinite recursion, and techniques originating from forcing theory.

Several questions remain open. The methods developed here appear sufficiently flexible to be adapted to other choice principles. In particular, we believe that a suitable refinement of our construction could be used to realize $DC_{<\hat{\kappa}}$ or even Zorn's Lemma restricted to $\gamma$ for every limit ordinal $\gamma$ below $\hat{\kappa}$. Establishing such a result would require additional work and will be the subject of future investigations.

Moreover, since the present work is a  generalization of a realizability model used by Krivine to realize the Continuum Hypothesis, 
the overall architecture of our construction suggests that analogous techniques could be employed to obtain realizability interpretations of instances of the Generalized Continuum Hypothesis below $\hat{\kappa}$. 

We leave these developments for future work. More generally, we hope that the present work contributes to a better understanding of the computational content of uncountable mathematical principles and of the role played by ordinals and transfinite recursion in classical realizability models of set theory.

\bibliographystyle{plain}
\bibliography{closure}

\end{document}